\newtheorem{theorem}{Theorem}[section]
\newtheorem{lemma}[theorem]{Lemma}
\newtheorem{corollary}[theorem]{Corollary}
\newtheorem{remark}[theorem]{Remark}
\newtheorem{proposition}[theorem]{Proposition}
\newtheorem{problem}[theorem]{Problem}
\def\osc{\mathop{\mathrm{osc}}}  
\newcommand{\cB}{\cal B}
\newcommand{\sr}{\stackrel}
\newcommand{\tri}{\sr{\triangle}{=}}
\newcommand{\be}{\begin{equation}}
\newcommand{\ee}{\end{equation}}
\newcommand{\bea}{\begin{eqnarray}}
\newcommand{\eea}{\end{eqnarray}}
\newcommand{\bes}{\begin{eqnarray*}}
\newcommand{\ees}{\end{eqnarray*}}
\newcommand{\bi}{\begin{itemize}}
\newcommand{\ei}{\end{itemize}}
\newcommand{\ben}{\begin{enumerate}}
\newcommand{\een}{\end{enumerate}}
\newcommand{\bp}{\begin{problem}}
\newcommand{\ep}{\end{problem}}
\newcommand{\hso}{\hspace{.1in}}
\newcommand{\hst}{\hspace{.2in}}
\newcommand{\hse}{\hspace{.05in}}
\newcommand{\noi}{\noindent}
\begin{document}
%
\title{\bf Extremum Problems with Total Variation Distance and their Applications}

\author{Charalambos~D.~Charalambous, Ioannis~Tzortzis, Sergey Loyka and Themistoklis~Charalambous
\thanks{C. D. Charalambous and I. Tzortzis are with the Department of Electrical Engineering, University of Cyprus, Nicosia, Cyprus.
      Emails:  {\tt \{\small chadcha,tzortzis.ioannis\}@ucy.ac.cy}.}%
 \thanks{S. Loyka is with the School of Information Technology and Engineering, University of Ottawa, Ontario, Canada.
      Email:  {\tt\small sergey.loyka@ieee.org}.}%
\thanks{T. Charalambous is with the School of Electrical Engineering, Royal Institute of Technology (KTH), Stockholm, Sweden.
      Email:  {\tt\small themisc@kth.se}.}%
      }
\maketitle

%
%
%
%
\begin{abstract}
The aim of this paper is to investigate extremum problems with pay-off being the total variational distance metric defined on the space of probability measures, subject to linear functional constraints on the space of probability measures, and vice-versa; that is, with the roles of total variational metric and linear functional interchanged. Utilizing concepts from signed measures, the extremum probability measures of such problems are obtained in closed form, by identifying the partition of the support set and the mass of these extremum measures on the partition. The results are derived for abstract spaces; specifically,  complete separable metric spaces known as Polish spaces, while the high level ideas are also discussed for denumerable spaces endowed with the discrete topology. These extremum problems often arise in many areas, such as, approximating a family of  probability distributions by a given probability distribution, maximizing or minimizing entropy subject to total variational distance metric constraints, quantifying uncertainty of probability distributions by total variational distance metric, stochastic minimax control, and in many problems of information, decision theory, and minimax theory.

\noi {\bf Keywords:} Total variational distance, extremum probability measures, signed measures.
\end{abstract}

\IEEEpeerreviewmaketitle

%
%
%
%
\section{Introduction}\label{sec.Intro}

Total variational distance metric on the space of probability measures is a fundamental quantity in statistics and probability, which over the years appeared in many diverse applications. In information theory it is used to define strong typicality and asymptotic equipartition of sequences generated by sampling from a given distribution \cite{cover}. In decision problems, it arises naturally when discriminating the results of observation of two statistical hypotheses \cite{cover}. In studying the ergodicity of Markov Chains, it is used to define the Dobrushin coefficient and establish the contraction property of transition probability distributions \cite{Meyn93}. Moreover, distance in total variation of probability measures is related via upper and lower bounds to an anthology of distances and distance metrics \cite{gibbs}. The measure of distance in total variation of probability measures is a strong form of closeness of probability measures, and, convergence with respect to total variation of probability measures implies their convergence with respect to other distances and distance metrics.

In this paper, we formulate and solve several extremum problems involving the total variational distance metric and we discuss their applications. The main problems investigated are the following.
\begin{itemize}
\item[(a)] Extremum problems of linear functionals on the space of measures subject to a total variational distance metric constraint defined on the space of measures.
\item[(b)] Extremum problems of total variational distance metric on the space of measures subject to linear functionals on the space of measures.
\item[(c)] Applications of these extremum problems, and their relations to other problems.
\end{itemize}

The formulation of these extremum problems, their discussion in terms of applications, and the contributions of this paper are developed at the abstract level, in which systems are represented by probability distributions on abstract spaces (complete separable metric space, known as Polish spaces \cite{Dupuis97}), pay-offs are represented by linear functionals on the space of probability measures or by distance in variation of probability measures, and constraints by linear functionals or distance in variation of probability measures. We consider Polish spaces since they are general enough to handle various models of practical interest.

Utilizing concepts from signed measures, closed form expressions of the probability measures are derived which achieve the extremum of these problems. The construction of the extremum measures involves the identification of the partition of their support set, and their mass defined on these partitions. Throughout the derivations we make extensive use of lower and upper bounds of pay-offs which are achievable. Several simulations are carried out to illustrate the different features of the extremum solution of the various problems. An interesting observation concerning one of the extremum problems is its equivalent formulation as an extremum problem involving the oscillator semi-norm of the pay-off functional. The formulation and results obtained for these problems at the abstract level are discussed throughout the paper in the context of various applications, often assuming denumerable spaces endowed with the discrete topology. Some specific envisioned applications of the theory developed are listed below.
\begin{itemize}
\item[(i)] Dynamic Programming Under Uncertainty, to deal with  uncertainty of transition probability distributions, via minimax theory, with total variational distance metric uncertainty constraints to codify the impact of incorrect distribution models on performance of the optimal strategies \cite{cif2007}. This formulation is applicable to Markov decision problems subject to uncertainty.
\item[(ii)] Approximation of Probability Distributions with Total Variational Distance Metric,
to approximate a given probability distribution $\mu$ on a measurable space $(\Sigma,{\cal B}(\Sigma))$ by another distribution $\nu$ on $(\Sigma,{\cal B}(\Sigma))$, via minimization of the total variational distance metric between them subject to linear functional constraints. Model and graph reduction can be handled via such approximations.
\item[(iii)] Maximization or Minimization of Entropy Subject to Total Variational Distance Metric Constraints, to invoke insufficient reasoning based on maximizing the entropy $H(\nu)$ of an unknown probability distribution $\nu$ on denumerable space $\Sigma$  subject to a constraint on  the total variational distance metric.
\end{itemize}

The rest of the paper is organized as follows. In section \ref{sec.extr.prob.}, total variational distance is defined, the extremum problems are introduced, while several related problems are discussed together with their applications. In section \ref{char.extr.meas.}, some of the properties of the problems are discussed. In section \ref{subsec.Equiv.Extr.Prob}, signed measures are utilized to convert the extremum problems into equivalent ones, and to characterize the extremum measures on abstract spaces. In section \ref{cem}, closed form expressions of the extremum measures are derived for finite alphabet spaces. In section \ref{sec.rel.metr}, the relation between total variational distance and other distance metrics is discussed. Finally, in section \ref{ex} several examples are worked out to illustrate how the optimal solution of extremum problems behaves by examining different scenarios concerning the partition of the space $\Sigma$.

%
%
%
%
\section{Extremum Problems}
\label{sec.extr.prob.}

In this section, we will introduce the extremum problems we shall investigate. Let $(\Sigma,d_{\Sigma})$ denote a complete, separable metric space and $(\Sigma,{\cal B}(\Sigma))$ the corresponding measurable space, where ${\cal B}(\Sigma)$ is the $\sigma$-algebra generated by open sets in $\Sigma$. Let ${\cal M}_1(\Sigma)$ denote the set of probability measures on ${\cal B}(\Sigma)$. The total variational distance\footnote{The definition of total variation distance can be extended to signed measures.} is a metric \cite{dunford} $d_{TV}: {\cal M}_1(\Sigma) \times {\cal M}_1(\Sigma)\rightarrow [0,\infty)$  defined by
\vspace{-0.3cm}
\bea
d_{TV}(\alpha,\beta)\equiv||\alpha-\beta ||_{TV} \tri \sup_{P\in {\cal P}(\Sigma)} \sum_{F_i \in P} |\alpha (F_i)- \beta(F_i)| \; , \label{tveq1}
\eea
\noi where $\alpha, \beta \in {\cal M}_1(\Sigma)$ and ${\cal
P}(\Sigma)$ denotes the collection of all finite partitions of $\Sigma$. With respect to this metric, $({\cal M}_1(\Sigma),d_{TV})$ is a complete metric space. Since the elements of ${\cal M}_1(\Sigma)$ are probability measures, then $d_{TV}(\alpha,\beta)\leq 2$. In minimax problems one can introduce an uncertainty set based on distance in variation as follows. Suppose the probability measure ${\nu}\in {\cal M}_1(\Sigma)$ is unknown, while modeling techniques give  access to a nominal  probability measure ${ \mu} \in {\cal M}_1(\Sigma)$. Having constructed the  nominal probability measure, one may construct from empirical data, the  distance of the two measures with respect to the total variational distance  $||{ \nu} - { \mu}||_{TV}$. This will provide an estimate of the radius $R$, such that $||{ \nu} - { \mu}||_{TV} \leq R$, and hence characterize the set of all possible true measures  ${\nu}\in {\cal M}_1(\Sigma)$, centered at the nominal distribution ${ \mu} \in {\cal M}_1(\Sigma)$, and lying within the ball of radius $R$, with respect to the total variational distance $||\cdot||_{TV}$. Such a procedure is used in information theory to define strong typicality of sequences. Unlike other distances used in the past such as relative entropy \cite{pra96,Ugrinovskii,Petersen,nc2007,Charalambous07}, quantifying uncertainty via the metric $||\cdot||_{TV}$ does not require absolute continuity of measures\footnote{$\nu\in{\cal M}_1(\Sigma)$ is absolutely continuous with respect to $\mu\in{\cal M}_1(\Sigma)$, denoted by $\nu< < \mu$, if $\mu(A)=0$ for some $A\in {\cal B}(\Sigma)$ then $\nu(A)=0$.}, i.e., singular measures are admissible, and hence $\nu$ and $\mu$ need not be defined on the same space. Thus, the support set of $\mu$ may be $\tilde{\Sigma}\subset\Sigma$, hence $\mu(\Sigma\setminus\tilde{\Sigma})=0$ but $\nu(\Sigma\setminus\tilde{\Sigma})\neq0$ is allowed. For measures induced by stochastic differential equations (SDE's), variational distance uncertainty set models situations in which both the drift and diffusion coefficient of SDE's are unknown.

Define the spaces
\begin{align*}
&BC(\Sigma)\tri\left\{\ell:\Sigma\mapsto{\mathbb R}:\ell \hse\mbox{are bounded continuous}\right\},\\
&BM(\Sigma)\tri\left\{\ell:\Sigma\mapsto{\mathbb R}:\ell \hse\mbox{are bounded measurable functions}\right\},\\
&BC^+(\Sigma)\tri\left\{BC(\Sigma):\ell\geq 0\right\},\hso BM^+(\Sigma)\tri\left\{BM(\Sigma):\ell\geq 0\right\}.
\end{align*}
\noi $BC(\Sigma)$ and $BM(\Sigma)$ endowed with the sup norm $||\ell||\tri \sup_{x \in {\Sigma}} |\ell(x)|$, are Banach spaces \cite{dunford}. Next, we introduce the two main extremum problems we shall investigate in this paper.

\begin{problem}\label{problem1}
Given a fixed nominal distribution ${ \mu} \in   {\cal M}_1(\Sigma)$ and a parameter $R\in [0,2] $, define the class of true  distributions by
\bea
{\mathbb B }_R({ \mu})   \tri \Big\{ {\bf \nu} \in {\cal M}_1(\Sigma): ||{ \nu} -{ \mu}||_{TV} \leq R \Big\}, \label{prob1.dfn.eq.1}
\eea
and the average  pay-off with respect to the true probability measure ${ \nu} \in {\mathbb B }_R({ \mu})$ by
\vspace{-0.1cm}
\bea
{\mathbb L}_1( { \nu}) \tri \int_{\Sigma} \ell(x) \nu(dx),\hst \ell\in BC^+(\Sigma) \hso{or} \hso BM^+(\Sigma).  \label{prob1.dfn.eq.2}
\eea
 The objective is to find the extremum of the pay-off
 \bea
D^+(R) \tri    \sup_{ { \nu} \in  {\mathbb B}_R({ \mu})}  \int_{ \Sigma} \ell(x)\nu(dx).  \label{prob1.dfn.eq.3}
\eea
\end{problem}

Problem \ref{problem1} is a convex optimization problem on the space of probability measures. Note that, $BC^+(\Sigma)$, $BM^+(\Sigma)$ can be generalized to $L^{\infty,+}(\Sigma, {\cal B}(\Sigma), \nu)$,  the set of all ${\cal B}(\Sigma)$-measurable, non-negative  essentially bounded functions defined $\nu-a.e.$ endowed with the essential supremum norm   $||\ell||_{\infty,\nu} = \nu \mbox{-} \mbox{ess } \sup_{x \in \Sigma} \ell(x) \tri \inf_{\Delta \in {\cal N_{\eta}}} \sup_{x \in \Delta^c} \| \ell(x) \|$, where ${\cal N_{\nu}} = \small{\{A \in  {\cal B}(\Sigma): \nu(A) = 0\}}$.

In the context of minimax theory, Problem \ref{problem1} is important in uncertain stochastic control, estimation, and decision, formulated via minimax optimization. Such formulations are found in \cite{pra96,Ugrinovskii,Petersen,nc2007,Charalambous07} utilizing relative entropy uncertainty, and in \cite{Poor80,Vastola84} utilizing $L_1$ distance uncertainty. In the context of dynamic programming this is discussed in \cite{ctc2012}. The second extremum problem is defined below.

\begin{problem}\label{problem2}
Given a fixed nominal distribution ${ \mu} \in   {\cal M}_1(\Sigma)$ and a parameter $D\in [0,\infty)$, define the class of true  distributions by
\bea
{\mathbb Q }(D)   \tri \Big\{ {\bf \nu} \in {\cal M}_1(\Sigma):  \int_{\Sigma}\ell(x) \nu(dx) \leq D \Big\},\hst \ell\in BC^+(\Sigma) \hso{or} \hso BM^+(\Sigma), \label{prob2.dfn.eq.1}
\eea
and the total variation pay-off with respect to the true probability measure ${ \nu} \in {\mathbb Q }(D)$ by
\vspace{-0.1cm}
\bea
{\mathbb L}_2( { \nu}) \tri ||\nu-\mu||_{TV}.  \label{prob2.dfn.eq.2}
\eea
 The objective is to find the extremum of the pay-off
 \bea
R^-(D) \tri    \inf_{ { \nu} \in  {\mathbb Q}(D)}  ||\nu-\mu||_{TV},  \label{prob2.dfn.eq.3}
\eea
\noi whenever \footnote{If $\int_{\Sigma}\ell(x)\mu(dx)\leq D$ then $\nu^*=\mu$ is the trivial extremum measure of (\ref{prob2.dfn.eq.3}).}$\int_{\Sigma}\ell(x)\mu(dx)>D$.
\end{problem}

\noi Problem \ref{problem2} is important in the context of approximation theory, since distance in variation is a measure of proximity of two probability distributions subject to constraints. It is also important in spectral measure or density approximation as follows. Recall that a function $\{R(\tau):-\infty\leq \tau\leq\infty\}$ is the covariance function of a quadratic mean continuous and wide-sense stationary process if and only if it is of the form \cite{Wong85}
\vspace{-0.25cm}
\bes R(\tau)=\int_{-\infty}^{\infty}e^{2\pi\nu\tau}F(d\nu),
\ees
\noi where $F(\cdot)$ is a finite Borel measure on ${\mathbb R}$, called spectral measure. Thus, by proper normalization of $F(\cdot)$ via $F_N(d\nu)\tri\frac{1}{R(0)}F(d\nu)$, then $F_N(d\nu)$ is a probability measure on $\cB({\mathbb R})$, and hence Problem \ref{problem2} can be used to approximate the class of spectral measures which satisfy moment estimates. Spectral estimation problems are discussed extensively in \cite{Ferrante08,georgiou06,Ferrante07,Georgiou03kullback-leiblerapproximation,Pavon06}, utilizing relative entropy and Hellinger distances. However, in these references, the approximated spectral density is absolutely continuous with respect to the nominal spectral density; hence, it can not deal with reduced order approximation. In this respect, distance in total variation between spectral measures is very attractive.

\vspace{-0.1cm}
\subsection{Related Extremum Problems}
\label{subsec.Rel.Extr.Prob}
\noi Problems \ref{problem1}, \ref{problem2} are related to additional  extremum problems which are introduced below.
\begin{itemize}
\item[(1)] The solution of (\ref{prob1.dfn.eq.3}) gives the solution to the problem defined by \bea \label{primal1}R^+(D)\tri\sup_{\nu\in {\cal M}_1(\Sigma):\int_{\Sigma}\ell(x)\nu(dx)\leq D}||\nu-\mu||_{TV}.\eea
Specifically, $R^+(D)$ is the inverse mapping of $D^+(R)$. $D^+(R)$ is investigated in \cite{fcn2012} in the context of minimax stochastic control under uncertainty, following an alternative approach which utilizes large deviation theory to express the extremum measure by a convex combination of a tilted and the nominal probability measures. The two disadvantages of the method pursued in \cite{Ugrinovskii,Petersen,nc2007,Charalambous07} are  the following. 1) No explicit closed form expression for the extremum measure is given, and as a consequence, 2) its application to dynamic programming is restricted to a class of uncertain probability measures which are absolutely continuous with respect to the nominal measure $\mu(\Sigma)\in{\cal M}_1(\Sigma)$.

\item[ (2)] Let $\nu$ and $\mu$ be absolutely continuous with respect to the Lebesgue measure so that $\varphi(x)\tri\frac{d\nu}{d(x)}(x)$, $\psi(x)\tri\frac{d\mu}{dx}(x)$ (e.g., $\varphi(\cdot)$, $\psi(\cdot)$ are the probability density functions of $\nu(\cdot)$ and $\mu(\cdot)$, respectively. Then, $||\nu-\mu||_{TV}=\int_{\Sigma}|\varphi(x)-\psi(x)|dx$ and hence, (\ref{prob1.dfn.eq.3}) and (\ref{primal1}) are $L_1$-distance optimization problems.

\item[(3)] Let $\Sigma$ be a non-empty denumerable set endowed with the discrete topology including finite cardinality  $|\Sigma|$, with ${\cal M}_1(\Sigma)$ identified with the standard probability simplex in ${\mathbb R}^{|\Sigma|}$, that is, the set of all $|\Sigma|$-dimensional vectors which are probability vectors,  and   $\ell(x)\tri -\log\nu(x), x \in \Sigma$, where $\{\nu(x):x\in\Sigma \}\in{\cal M}_1(\Sigma)$, $\{\mu(x):x\in\Sigma \}\in{\cal M}_1(\Sigma)$.
 Then (\ref{prob1.dfn.eq.3}) is equivalent to maximizing the entropy of $\{\nu(x):x\in\Sigma \}$ subject to total variational distance metric constraint  defined by \bea \label{primal2}D^+(R)=\sup_{\nu\in{\cal M}_1(\Sigma):\sum_{x\in\Sigma}|\nu(x)-\mu(x)|\leq R}H(\nu).\eea
Problem (\ref{primal2}) is of interest when the concept of insufficient reasoning (e.g., Jayne's maximum entropy principle \cite{jaynes1,jaynes2}) is applied to construct a model for $\nu\in{\cal M}_1(\Sigma)$, subject to information quantified via total variational distance metric between $\nu$ and an empirical distribution $\mu$. In the context of stochastic uncertain control systems, and its relation to robustness, Problem (\ref{primal2}) with the total variational distance constraint replaced by relative entropy distance constraint is investigated in \cite{baras,rezcharahm12}.

\item[ (4) ] The solution of (\ref{prob2.dfn.eq.3}) gives the solution to the problem defined by \bea D^-(R)\tri\inf_{\nu\in{\cal M}_1(\Sigma):||\nu-\mu||_{TV}\leq R}\int_{\Sigma}\ell(x)\nu(dx)\label{rep4}.\eea
\noi Problems (\ref{prob2.dfn.eq.3}) and (\ref{rep4}) are important in approximating a class of probability distributions or spectral measures by reduced ones. In fact, the solution of (\ref{rep4}) is obtained precisely as that of Problem \ref{problem1}, with a reverse computation of the partition of the space $\Sigma$ and the mass of the extremum measure on the partition moving in the opposite direction.
\end{itemize}

%
%
%
%
\section{Characterization of Extremum Measures on Abstract Spaces}
\label{char.extr.meas.}

This section utilizes signed measures and some of their properties to convert Problems \ref{problem1}, \ref{problem2} into equivalent extremum problems. First, we discuss some of the properties of these extremum Problems.

\begin{lemma}\label{properties}
\noi
\begin{itemize}
\item[(1)] $D^+(R)$ is a non-decreasing concave function of $R$, and \bea D^+(R)=\sup_{||\nu-\mu||_{TV}=R}\int_{\Sigma}\ell(x)\nu(dx),\hst\mbox{if}\hst R\leq R_{\max},\label{prop1}\eea
    where $R_{\max}$ is the smallest non-negative number belonging to $[0,2]$ such that $D^+(R)$ is constant in $[R_{\max},2]$.
\item[(2)] $R^-(D)$ is a non-increasing convex function of $D$, and \bea R^-(D)=\inf_{\int_{\Sigma}\ell(x)\nu(dx)=D}||\nu-\mu||_{TV},\hst\mbox{if}\hst D\leq D_{\max},\label{prop2}\eea where $D_{\max}$ is the smallest non-negative number belonging to $[0,\infty)$ such that $R^-(D)=0$ for any $D\in[D_{\max},\infty)$.
\end{itemize}
\end{lemma}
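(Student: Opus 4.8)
The plan is to prove both parts from three soft ingredients only -- monotonicity via nested feasible sets, concavity/convexity via mixtures of measures, and a boundary-attainment argument driven by strict monotonicity -- without invoking the explicit form of the extremum measure derived in the later sections.

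\textbf{Part (1), monotonicity and concavity.} First I would observe that $R_1\le R_2$ forces $\mathbb{B}_{R_1}(\mu)\subseteq\mathbb{B}_{R_2}(\mu)$, so taking the supremum over the larger set shows $D^+$ is non-decreasing. For concavity, fix $\nu_1\in\mathbb{B}_{R_1}(\mu)$, $\nu_2\in\mathbb{B}_{R_2}(\mu)$ and $\lambda\in[0,1]$, and form the mixture $\nu_\lambda=\lambda\nu_1+(1-\lambda)\nu_2\in{\cal M}_1(\Sigma)$. Since $\nu_\lambda-\mu=\lambda(\nu_1-\mu)+(1-\lambda)(\nu_2-\mu)$, the triangle inequality and positive homogeneity of $||\cdot||_{TV}$ give $||\nu_\lambda-\mu||_{TV}\le\lambda R_1+(1-\lambda)R_2$, so $\nu_\lambda$ is feasible at radius $\lambda R_1+(1-\lambda)R_2$. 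Because $\nu\mapsto\int_\Sigma\ell\,d\nu$ is linear, evaluating the pay-off at $\nu_\lambda$ and then taking suprema over $\nu_1$ and $\nu_2$ independently yields $D^+(\lambda R_1+(1-\lambda)R_2)\ge\lambda D^+(R_1)+(1-\lambda)D^+(R_2)$.

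\textbf{Part (1), the boundary identity \eq{prop1}.} Writing $g(r)\tri\sup_{||\nu-\mu||_{TV}=r}\int_\Sigma\ell\,d\nu$ and partitioning $\mathbb{B}_R(\mu)$ according to the value $r=||\nu-\mu||_{TV}\in[0,R]$ gives $D^+(R)=\sup_{0\le r\le R}g(r)$, together with the trivial bound $g(r)\le D^+(r)$. The crux is that $D^+$ is strictly increasing on $[0,R_{\max})$: a non-decreasing concave function has a non-negative, non-increasing right derivative, so once it stops increasing it stays constant thereafter; were it flat on any subinterval of $[0,R_{\max})$ this would contradict the minimality built into the definition of $R_{\max}$. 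Hence for every $r<R\le R_{\max}$ one gets $g(r)\le D^+(r)<D^+(R)$, so no radius strictly below $R$ can generate the supremum, forcing $D^+(R)=g(R)$. I expect the one delicate point here to be excluding the possibility that $D^+(R)$ is merely \emph{approached} from the interior $r<R$ rather than attained at $r=R$. The cleanest closure uses attainment of $D^+(R)$ by some $\nu^*$: if $r^*\tri||\nu^*-\mu||_{TV}<R$ then $D^+(R)=\int_\Sigma\ell\,d\nu^*\le g(r^*)\le D^+(r^*)<D^+(R)$, a contradiction, so $r^*=R$ and $g(R)=D^+(R)$; alternatively, in the abstract case, continuity of the concave map $D^+$ on $(0,2)$ handles the endpoint $R=R_{\max}$.

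\textbf{Part (2), the mirror argument.} The nesting $\mathbb{Q}(D_1)\subseteq\mathbb{Q}(D_2)$ for $D_1\le D_2$ gives that $R^-$ is non-increasing, and the same mixture $\nu_\lambda$ works for convexity: linearity of $\int_\Sigma\ell\,d\nu$ keeps $\nu_\lambda$ feasible at level $\lambda D_1+(1-\lambda)D_2$, the triangle inequality bounds $||\nu_\lambda-\mu||_{TV}$ by $\lambda||\nu_1-\mu||_{TV}+(1-\lambda)||\nu_2-\mu||_{TV}$, and taking infima yields $R^-(\lambda D_1+(1-\lambda)D_2)\le\lambda R^-(D_1)+(1-\lambda)R^-(D_2)$. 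Moreover $R^-(D)=0$ holds exactly when $\mu\in\mathbb{Q}(D)$, i.e. when $\int_\Sigma\ell\,d\mu\le D$, identifying $D_{\max}=\int_\Sigma\ell\,d\mu$; a convex, non-increasing function that reaches $0$ at $D_{\max}$ and stays there must be strictly decreasing on $[0,D_{\max})$ by the derivative argument dual to the one above. Partitioning $\mathbb{Q}(D)$ by the attained pay-off value and repeating the strict-monotonicity reasoning then delivers \eq{prop2}, with the same and only subtlety -- boundary attainment at $D=D_{\max}$ -- resolved by strict monotonicity together with continuity of the convex function $R^-$.
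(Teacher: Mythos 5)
Your proposal is correct and, for the monotonicity and concavity/convexity claims, follows essentially the same mixture-of-measures route as the paper; your variant is in fact slightly tidier, since you take suprema over $\nu_1,\nu_2$ only at the end rather than assuming, as the paper does, that measures achieving $D^+(R_1)$ and $D^+(R_2)$ exist. Where you genuinely diverge is in the boundary identities \eq{prop1} and \eq{prop2}. The paper sets $\bar{D}^+(R)\tri\sup_{||\nu-\mu||_{TV}=R}\int_\Sigma\ell\,d\nu$, asserts without proof that $\bar{D}^+$ is concave, and concludes from $D^+(R)=\sup_{R'\le R}\bar{D}^+(R')$ that the two coincide on $[0,R_{\max}]$; note that concavity of $\bar{D}^+$ does \emph{not} follow from the same mixture argument, because the triangle inequality only gives $||\nu_\lambda-\mu||_{TV}\le\lambda r_1+(1-\lambda)r_2$, so mixing does not preserve the equality constraint. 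You instead decompose the ball into spheres, derive strict monotonicity of $D^+$ on $[0,R_{\max}]$ from concavity together with the minimality of $R_{\max}$ (sound: if $D^+(R_1)=D^+(R_2)$ with $R_1<R_2$, concavity plus monotonicity force $D^+$ to be constant on $[R_1,2]$), and push any maximizer to the boundary. This avoids the paper's unproved concavity claim at the price of requiring the supremum $D^+(R)$ to be attained — a point you rightly flag. Attainment is automatic in the finite-alphabet case and is implicit throughout the paper (the explicit extremizer \eq{nm1} presupposes that $\sup\ell$ and $\inf\ell$ are attained), so this is acceptable here; but keep the attainment argument rather than your continuity fallback, since continuity of $D^+$ only yields $\sup_{r<R}g(r)\le D^+(R)$ and does not deliver $g(R)\ge D^+(R)$. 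The dual reasoning for part (2), including the identification $D_{\max}=\int_\Sigma\ell\,d\mu$ (which the paper defers to Corollary \ref{cor.of.lemma}), is correct.
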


\begin{proof}
\noi \textbf{(1)} Suppose $0\leq R_1\leq R_2$, then for every $\nu\in {\mathbb B}_{R_1}({ \mu})$ we have $||\nu-\mu||_{TV}\leq R_1\leq R_2$,
\noi and therefore $\nu\in {\mathbb B}_{R_2}({ \mu})$, hence
\bes \sup_{\nu\in {\mathbb B}_{R_1}({ \mu})}\int_{\Sigma}\ell(x)\nu(dx)\leq \sup_{\nu\in {\mathbb B}_{R_2}({ \mu})}\int_{\Sigma}\ell(x)\nu(dx),\ees
\noi which is equivalent to $D^+(R_1)\leq D^+(R_2)$. So $D^+(R)$ is a non-decreasing function of $R$. Now consider two points $(R_1,D^+(R_1))$ and $(R_2,D^+(R_2))$ on the linear functional curve, such that $\nu_1\in{\mathbb B }_{R_1}({ \mu})$ achieves the supremum of (\ref{prob1.dfn.eq.3}) for $R_1$, and $\nu_2\in{\mathbb B }_{R_2}({ \mu})$ achieves the supremum of (\ref{prob1.dfn.eq.3}) for $R_2$. Then,
$||\nu_1-\mu||_{TV}\leq R_1$ and $||\nu_2-\mu||_{TV}\leq R_2$. For any $\lambda\in(0,1)$, we have
\begin{align*}
||\lambda\nu_1+(1-\lambda)\nu_2-\mu||_{TV}\leq \lambda||\nu_1-\mu||_{TV}+(1-\lambda)||\nu_2-\mu||_{TV}\leq\lambda R_1+(1-\lambda)R_2=R.
\end{align*}
\noi Define $\nu^*\tri \lambda\nu_1+(1-\lambda)\nu_2$, $R\tri \lambda R_1+(1-\lambda)R_2$. The previous equation implies that $\nu^*\in{\mathbb B }_{R}({ \mu})$, hence $D^+(\lambda R_1+(1-\lambda)R_2)\geq \int_{\Sigma}\ell(x)\nu^*(dx)$. Therefore,
\begin{align*}
D^+(R)&=\sup_{\nu\in{\mathbb B }_R({ \mu})}\int_{\Sigma}\ell(x)\nu(dx)\geq \int_{\Sigma}\ell(x)\nu^*(dx)=\int_{\Sigma}\ell(x)\left(\lambda\nu_1(dx)+(1-\lambda)\nu_2(dx)\right)\\
&=\lambda \int_{\Sigma}\ell(x)\nu_1(dx)+(1-\lambda)\int_{\Sigma}\ell(x)\nu_2(dx)=\lambda D^+(R_1)+(1-\lambda)D^+(R_2).
\end{align*}

\noi So, $D^+(R)$ is a concave function of $R$. Also the right side of (\ref{prop1}), say $\bar{D}^+(R)$, is concave function of $R$. But $D^+(R)=\sup_{R'\leq R}\bar{D}^+(R')$ which completes the derivation of (\ref{prop1}).

\noi \textbf{(2)} Suppose $0\leq D_1\leq D_2$, then ${\mathbb Q}(D_1)\subset{\mathbb Q}(D_2)$, and $\inf_{\nu\in{\mathbb Q}(D_1)}||\nu-\mu||_{TV}\geq \inf_{\nu\in{\mathbb Q}(D_2)}||\nu-\mu||_{TV}$ which is equivalent to $R^-(D_1)\geq R^-(D_2)$. Hence, $R^-(D)$ is a non-increasing function of D. Now consider two points $(D_1,R^-(D_1))$ and $(D_2,R^-(D_2))$ on the total variation curve. Let $D\tri\lambda D_1+(1-\lambda)D_2$, $\nu^*\tri\lambda\nu_1+(1-\lambda)\nu_2$ and $\nu_1\in{\mathbb Q}(D_1)$, $\nu_2\in{\mathbb Q}(D_2)$ such that $||\nu_1-\mu||_{TV}=R^-(D_1)$ and $||\nu_2-\mu||_{TV}=R^-(D_2)$. Then, $\int_{\Sigma}\ell(x)\nu_1(dx)\leq D_1$ and $\int_{\Sigma}\ell(x)\nu_2(dx)\leq D_2$. Taking convex combination leads to
\begin{align*} &\lambda\int_{\Sigma}\ell(x)\nu_1(dx)+(1-\lambda)\int_{\Sigma}\ell(x)\nu_2(dx)\leq \lambda D_1+(1-\lambda)D_2=D,\end{align*}
\noi and hence $\nu^*\in {\mathbb Q}(D)$. So,
\begin{align*}
R^-(D)&=\inf_{\nu\in{\mathbb Q}(D)}||\nu-\mu||_{TV}\leq ||\nu^*-\mu||_{TV}=||\lambda\nu_1+(1-\lambda)\nu_2-\mu||_{TV}\\
&\leq \lambda||\nu_1-\mu||_{TV}+(1-\lambda)||\nu_2-\mu||_{TV}=\lambda R^-(D_1)+(1-\lambda)R^-(D_2).
\end{align*}

\noi This shows that $R^-(D)$ is convex function of $D$. Also the right side of (\ref{prop2}), say $\bar{R}^-(D)$, is convex function of $D$. But, $R^-(D)=\inf_{D'\leq D}\bar{R}^-(D')$ which completes the derivation of (\ref{prop2}).
\end{proof}

Let ${\cal M}_{sm}(\Sigma)$ denote the set of finite signed measures. Then, any ${ \eta} \in  {\cal M}_{sm}(\Sigma)$ has a Jordan decomposition \cite{halmos} $\big\{  { \eta}^+, { \eta}^-\big\}$ such that  ${ \eta }= { \eta}^+ - { \eta}^-$, and the total variation of ${ \eta}$ is defined by $|| { \eta}||_{TV} \tri { \eta}^+(\Sigma) + { \eta}^-(\Sigma)$.  Define the following subset ${\mathbb M}_0(\Sigma) \tri \Big\{ { \eta} \in {\cal M}_{sm}(\Sigma): { \eta}(\Sigma)=0\Big\}$. For ${ \xi} \in {\mathbb M}_0(\Sigma)$, then $\xi(\Sigma)=0$, which implies that ${ \xi}^+(\Sigma)= { \xi}^-(\Sigma)$, and hence ${ \xi}^+(\Sigma)= { \xi}^-(\Sigma)=\frac{||{  \xi}||_{TV}}{2}$. Then, ${ \xi}\tri { \nu} -{ \mu} \in  {\mathbb M}_0(\Sigma)$ and hence ${ \xi} = ({ \nu}-{ \mu})^+ -({ \nu}- { \mu})^- \equiv { \xi}^+- { \xi}^-$.

\vspace{-0.2cm}
\subsection{Equivalent Extremum Problem of $D^+(R)$}
\label{subsec.Equiv.Extr.Prob}

\noi Consider the pay-off of Problem \ref{problem1}, for ${\bf \ell} \in BC^+(\Sigma)$. Then the following inequalities hold.
\begin{align}
D^+(R)\tri  \int_{\Sigma} \ell(x) \nu(dx)&\overset{(a)}=\int_{\Sigma} \ell(x) \left(\xi^{+}(dx)- \xi^{-}(dx)\right) + \int_{\Sigma} \ell(x) \mu(dx) \nonumber  \\
& \overset{(b)}\leq \sup_{x\in \Sigma} \ell(x){ \xi}^{+}(\Sigma) - \inf_{x\in \Sigma} \ell(x) { \xi}^-(\Sigma) + \int_{\Sigma} \ell(x) \mu(dx) \nonumber  \\
& \overset{(c)}= \sup_{x\in \Sigma} \ell(x)\frac{||{ \xi} ||_{TV}}{2} - \inf_{x\in \Sigma} \ell(x)\frac{||{ \xi} ||_{TV}}{2} + \int_{\Sigma} \ell(x) \mu(dx) \nonumber  \\
&=\left\{ \sup_{x\in \Sigma} \ell(x) - \inf_{x\in \Sigma} \ell(x) \right\}\frac{||{ \xi} ||_{TV}}{2} + \int_{\Sigma} \ell(x) \mu(dx), \label{f2}
\end{align}
\noi where (a) follows by adding and subtracting $\int\ell d\mu$, and from the Jordan decomposition of $(\nu-\mu)$, (b) follows due to $\ell\in BC^+(\Sigma)$, (c) follows because any $\xi\in {\mathbb M}_0(\Sigma)$ satisfies $\xi^+(\Sigma)=\xi^-(\Sigma)=\frac{1}{2}||\xi||_{TV}$. For a given ${ \mu} \in   {\cal M}_1({\Sigma})$ and $\nu\in{\mathbb B }_R({ \mu})$ define the set
\bes
\widetilde{{\mathbb B }}_R({ \mu}) \tri \left\{ { \xi} \in {\mathbb M}_0(\Sigma): { \xi} = { \nu}- { \mu}, { \nu } \in {\cal M}_1(\Sigma), ||{ \xi}||_{TV} \leq R\right\}. \label{sb1}
\ees

\noi The upper bound in the right hand side of (\ref{f2}) is achieved by ${ \xi}^* \in \widetilde{{\mathbb B }}_R({ \mu})$ as follows. Let
\begin{align*}
&x^0 \in \Sigma^0  \tri  \left\{ x \in \Sigma: \ell(x) = \sup \{\ell(x): x\in \Sigma\} \equiv M \right\}, \\
&x_0 \in \Sigma_0  \tri  \left\{ x \in \Sigma: \ell(x) = \inf\{\ell(x): x\in \Sigma\} \equiv m \right\}.
\end{align*}
Take
\vspace{-0.25cm}
\bea
{ \xi}^*(dx) ={ \nu}^*(dx)-{ \mu}(dx) = \frac{R}{2} \left(\delta_{x^0}(dx) - \delta_{x_0}(dx)\right), \label{nm1}
\eea

\noi where $\delta_y(dx)$ denotes the Dirac measure concentrated at $y\in \Sigma$. This is indeed a signed measure with total variation $||\xi^*||_{TV}=||{ \nu}^*-{ \mu}||_{TV}=R$, and $\int_{\Sigma} \ell(x) ({\nu}^*- { \mu})(dx)=\frac{R}{2}\left(M-m\right)$. Hence, by using (\ref{nm1}) as a candidate of the maximizing distribution then the extremum Problem \ref{problem1} is equivalent to
\bea D^+(R)=\int_{\Sigma} \ell(x) \nu^*(dx) =  \frac{R}{2}  \left\{ \sup_{x \in {\Sigma}} \ell(x)  -  \inf_{x \in \Sigma} \ell(x) \right\}    + E_{\mu}(\ell), \label{f2n}
\eea
where ${ \nu}^*$  satisfies the constraint $||{ \xi}^*||_{TV}= ||{ \nu}^*-{ \mu}||_{TV} =R$, it is normalized  ${ \nu}^*(\Sigma)=1$, and $ 0 \leq \nu^*(A) \leq 1$ on any $A \in {\cal B}(\Sigma)$. Alternatively, the pay-off $\int_{\Sigma}\ell(x)\nu^*(dx)$ can be written as
\begin{align}
\int_{\Sigma}\ell(x)\nu^*(dx)&= \int_{\Sigma^0}     M   \nu^*(dx) +   \int_{\Sigma_0}   m     \nu^*(dx)  +   \int_{\Sigma \setminus \Sigma^0\cup \Sigma_0} \ell(x) \mu(dx).  \label{n2}
 \end{align}
\noi Hence, the optimal distribution ${\nu}^*\in {\mathbb B }_R({ \mu})$ satisfies
\begin{align}
&\int_{\Sigma^0} \nu^*(dx) = \mu(\Sigma^0) + \frac{R}{2}\in[0,1], \hso  \int_{\Sigma_0} \nu^*(dx) = \mu(\Sigma_0) - \frac{R}{2}\in[0,1],\nonumber\\
&\nu^*(A)= \mu(A), \hso \forall A \subseteq \Sigma \setminus \Sigma^0\cup \Sigma_0. \label{f322n}
\end{align}

\begin{remark}
\noi
\begin{itemize}
\item[(1)] For $\mu\in {\cal M}_1(\Sigma)$ which do not include point mass, and for $f\in BC^+(\Sigma)$, if $\Sigma^0$ and $\Sigma_0$ are countable, then (\ref{f322n}) is $\mu(\Sigma^0)=\mu(\Sigma_0)=0$, $\nu^*(\Sigma_0)=0$, $\nu^*(\Sigma^0)=\frac{R}{2}$, $\nu^*(\Sigma\setminus \Sigma^0\cup\Sigma_0)=\mu(\Sigma\setminus \Sigma^0\cup\Sigma_0)-\frac{R}{2}$.
\item[(2)] The first right side term in (\ref{f2n}) is related to the oscillator seminorm of $f\in BM(\Sigma)$ called global modulus of continuity, defined by $\osc (f)\triangleq \sup_{(x,y)\in\Sigma\times\Sigma}|f(x)-f(y)| =2\inf_{\alpha\in \mathbb{R}}||f-\alpha||$.
For $f\in BM^+(\Sigma)$, $\osc (f)=\sup_{x\in\Sigma}|f(x)|-\inf_{x\in\Sigma}|f(x)|$.
\end{itemize}
\end{remark}

\subsection{Equivalent Extremum Problem of $R^-(D)$}
\label{subsec.Equiv.Extr.Prob1}
\noi Next, we proceed with the abstract formulation of Problem \ref{problem2}. Consider the constraint of Problem \ref{problem2}, for $\ell\in BC^+(\Sigma)$. Then the following inequalities hold.
\begin{align}
\int_{\Sigma} \ell(x) \nu(dx)&=\int_{\Sigma} \ell(x) \left(\xi^{+}(dx)- \xi^{-}(dx)\right) + \int_{\Sigma} \ell(x) \mu(dx) \nonumber  \\
& \geq \inf_{x\in \Sigma} \ell(x){ \xi}^{+}(\Sigma) - \sup_{x\in \Sigma} \ell(x) { \xi}^-(\Sigma) + \int_{\Sigma} \ell(x) \mu(dx) \nonumber  \\
& = \inf_{x\in \Sigma} \ell(x)\frac{||{ \xi} ||_{TV}}{2} - \sup_{x\in \Sigma} \ell(x)\frac{||{ \xi} ||_{TV}}{2} + \int_{\Sigma} \ell(x) \mu(dx) \nonumber  \\
&=\left\{ \inf_{x\in \Sigma} \ell(x) - \sup_{x\in \Sigma} \ell(x) \right\}\frac{||{ \xi} ||_{TV}}{2} + \int_{\Sigma} \ell(x) \mu(dx). \label{fprblm2}
\end{align}

\noi The lower bound on the right hand side of (\ref{fprblm2}) is achieved by choosing ${ \xi}^* \in \widetilde{{\mathbb B }}_R({ \mu})$ as follows \bea
{ \xi}^*(dx) ={ \nu}^*(dx)-{ \mu}(dx) = \frac{R}{2} \left(\delta_{x_0}(dx) - \delta_{x^0}(dx)\right). \label{nmprblm2}
\eea

\noi This is a signed measure with total variation $||\xi^*||_{TV}=||{ \nu}^*-{ \mu}||_{TV}=R$. Hence, by using (\ref{nmprblm2}) as a candidate of the minimizing distribution then (\ref{fprblm2}) is equivalent to
\bea \int_{\Sigma} \ell(x) \nu^*(dx) =  \frac{R}{2}  \left\{ \inf_{x \in {\Sigma}} \ell(x)  -  \sup_{x \in \Sigma} \ell(x) \right\}    + E_{\mu}(\ell). \label{f2nproblm2}
\eea
\noi Solving the above equation with respect to $R$ the extremum Problem \ref{problem2} (for $D<E_{\mu}(\ell)$) is equivalent to
\vspace{-0.3cm}
\begin{align}
R^-(D)=\frac{2(D-E_{\mu}(\ell))}{\left\{\displaystyle \inf_{x \in {\Sigma}} \ell(x)  -  \sup_{x \in \Sigma}\ell(x)\right\}},
\end{align}

\noi where $\nu^*$ satisfies the constraint $\int_{\Sigma}\ell(x)\nu^*(dx)=D$, it is normalized $\nu^*(\Sigma)=1$, and $0\leq \nu(A)\leq 1$ on any $A \in {\cal B}(\Sigma)$. We can now identify $R_{\max}$ and $D_{\max}$ described in Lemma \ref{properties}. These are stated as a corollary.

\vspace{-0.2cm}
\begin{corollary}\label{cor.of.lemma}
\noi The values of $R_{\max}$ and $D_{\max}$ described in Lemma \ref{properties} are given by
\vspace{-0.1cm}
\bes
R_{\max}=2\left(1-\mu\left(\Sigma^0\right)\right) \hso\mbox{and}\hso D_{\max}=\int_{\Sigma}\ell(x)\mu(dx).
\ees
\end{corollary}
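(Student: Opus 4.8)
The plan is to pin down each threshold by combining the monotonicity and concavity from Lemma \ref{properties} with the explicit extremum construction already obtained. For $R_{\max}$ I would first record the elementary ceiling: for any $\nu\in{\cal M}_1(\Sigma)$ one has $\int_\Sigma \ell\,d\nu\leq \sup_x\ell(x)\equiv M$, with equality if and only if $\nu(\Sigma^0)=1$, where $\Sigma^0=\{x:\ell(x)=M\}$. Since $D^+$ is non-decreasing and bounded above by $M$, the smallest $R$ at which it becomes constant is exactly the smallest $R$ for which ${\mathbb B}_R(\mu)$ contains a measure supported on $\Sigma^0$. I would then exhibit such a measure at radius $2(1-\mu(\Sigma^0))$ by moving all the mass $\mu$ assigns to $\Sigma\setminus\Sigma^0$ onto $\Sigma^0$; the signed measure $\xi=\nu-\mu$ then satisfies $\xi^+(\Sigma)=\xi^-(\Sigma)=1-\mu(\Sigma^0)$, so $||\xi||_{TV}=2(1-\mu(\Sigma^0))$. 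This value is also read off directly from (\ref{f322n}): imposing $\nu^*(\Sigma^0)=\mu(\Sigma^0)+\frac{R}{2}=1$ forces $R=2(1-\mu(\Sigma^0))$, at which point the pay-off equals $M$.

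The substantive step for $R_{\max}$, and the one I expect to be the main obstacle, is the matching lower bound: that no measure achieving the ceiling $M$ can sit closer than $2(1-\mu(\Sigma^0))$ to $\mu$ in total variation. I would argue this through the Jordan decomposition used throughout Section \ref{char.extr.meas.}. If $\nu(\Sigma^0)=1$ then $\nu(\Sigma\setminus\Sigma^0)=0$ while $\mu(\Sigma\setminus\Sigma^0)=1-\mu(\Sigma^0)$, so the negative part obeys $\xi^-(\Sigma)\geq\xi^-(\Sigma\setminus\Sigma^0)=1-\mu(\Sigma^0)$; the constraint $\xi\in{\mathbb M}_0(\Sigma)$ gives $\xi^+(\Sigma)=\xi^-(\Sigma)$, whence $||\xi||_{TV}=2\xi^-(\Sigma)\geq 2(1-\mu(\Sigma^0))$. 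Together with the construction above this yields $R_{\max}=2(1-\mu(\Sigma^0))$ and $D^+(R)\equiv M$ on $[R_{\max},2]$.

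For $D_{\max}$ the argument is shorter. Here $R^-(D)=0$ means the infimum in (\ref{prob2.dfn.eq.3}) is attained at zero distance from $\mu$, i.e.\ by $\mu$ itself, which is feasible precisely when $\int_\Sigma\ell\,d\mu\leq D$. Hence $R^-(D)=0$ for every $D\geq\int_\Sigma\ell\,d\mu$, giving $D_{\max}\leq\int_\Sigma\ell\,d\mu$. Conversely, for $D<\int_\Sigma\ell\,d\mu=E_\mu(\ell)$ the nominal measure violates the constraint, and the explicit expression $R^-(D)=\frac{2(D-E_\mu(\ell))}{\inf_x\ell(x)-\sup_x\ell(x)}$ derived just above is strictly positive, since both numerator and denominator are negative. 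Therefore the smallest threshold beyond which $R^-$ vanishes is exactly $D_{\max}=\int_\Sigma\ell(x)\mu(dx)$.

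The only caveat I would flag is that the characterization of $R_{\max}$ presumes the supremum $M=\sup_x\ell(x)$ is attained, so that $\Sigma^0\neq\emptyset$ and the ceiling $M$ is an attained maximum of $D^+$; in the finite-alphabet setting of Section \ref{cem}, where ${\cal M}_1(\Sigma)$ is a compact simplex and $\ell$ is a finite vector, this is automatic and the strict monotonicity of $D^+$ below $R_{\max}$ follows from compactness of the ball.
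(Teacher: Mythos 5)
Your proposal is correct and follows essentially the same route as the paper: both identify the ceiling $M=\sup_x\ell(x)$, observe it is reached exactly when $\nu(\Sigma^0)=1$, compute the total variation distance of such a measure from $\mu$ to get $2(1-\mu(\Sigma^0))$, and for $D_{\max}$ note that $\mu$ itself is feasible precisely when $D\geq\int_\Sigma\ell\,d\mu$. Your Jordan-decomposition bound $\xi^-(\Sigma)\geq\xi^-(\Sigma\setminus\Sigma^0)=1-\mu(\Sigma^0)$ is in fact a slightly cleaner justification of the paper's step, which instead computes the distance directly under the implicit assumption $\nu(x)\geq\mu(x)$ on $\Sigma^0$.
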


\begin{proof}
\noi Concerning $R_{\max}$, we know that $D^+(R)\leq \sup_{x\in \Sigma}\ell(x)$, $\forall R\geq0$, hence $D^+(R_{\max})$ can be at most $\sup_{x\in \Sigma}\ell(x)$. Since $D^+(R)$ is non-decreasing then $D^+(R_{\max})\leq D^+(R)\leq \sup_{x\in \Sigma}\ell(x)$, for any $R\geq R_{\max}$. Consider a $\nu$ that achieves this supremum. Let $\mu(\Sigma^0)$ and $\nu(\Sigma^0)$ to denote the nominal and true probability measures on $\Sigma^0$, respectively. If $\nu(\Sigma^0)=1$ then $\nu(\Sigma\setminus \Sigma^0)=0$. Therefore,
\begin{align*}
||\nu-\mu||_{TV}&=\sum_{x\in\Sigma^0}|\nu(x)-\mu(x)|+\sum_{x\in\Sigma\setminus\Sigma^0}|\nu(x)-\mu(x)|
\overset{(a)}=\sum_{x\in\Sigma^0}|\nu(x)-\mu(x)|+\sum_{x\in\Sigma\setminus\Sigma^0}|-\mu(x)|\\
&\overset{(b)}=\sum_{x\in\Sigma^0}\nu(x)-\sum_{x\in\Sigma^0}\mu(x)+\sum_{x\in\Sigma\setminus\Sigma^0}\mu(x)
=1-\sum_{x\in\Sigma^0}\mu(x)+\sum_{x\in\Sigma\setminus\Sigma^0}\mu(x)\\
&=2\left(1-\sum_{x\in\Sigma^0}\mu(x)\right)=2\left(1-\mu(\Sigma^0)\right),
\end{align*}

\noi where (a) follows due to $\nu(\Sigma\setminus\Sigma^0)=0$ which implies $\nu(x)=0$ for any $x\in\Sigma\setminus\Sigma^0$, and (b) follows because $\nu(x)\geq \mu(x)$ for all $x\in\Sigma^0$. Therefore, $R_{\max}=2(1-\mu(\Sigma^0))$ implies that $D^+(R_{\max})=\sup_{x\in\Sigma}\ell(x)$. Hence, $D^+(R)=\sup_{x\in \Sigma}\ell(x)$, for any $R\geq R_{\max}$.

Concerning $D_{\max}$, we know that $R^-(D)\geq 0$ for all $D\geq0$ hence $R^-(D_{\max})$ can be at least zero. Let $D_{\max}=\int_{\Sigma}\ell(x)\mu(dx)$, then it is obvious that $R^-(D_{\max})=0$. Since $R^-(D)$ in non-increasing, then $0\leq R^-(D)\leq R^-(D_{\max})$, for any $D\geq D_{\max}$. Hence, $ R^-(D)=0$, for any $D\geq D_{\max}$.
\end{proof}

%
%
%
%
\section{Characterization of Extremum Measures for Finite Alphabets} \label{cem}

This section uses the results of Section~\ref{char.extr.meas.} to compute closed form expressions for the extremum measures $\nu^*$ for any $R \in [0,2]$, when $\Sigma$ is a finite alphabet space to give the intuition into the solution procedure. This is done by identifying the sets $\Sigma^0$, $\Sigma_0$, $\Sigma\setminus\Sigma^0 \cup \Sigma_0$, and the measure $\nu^*$ on these sets for any $R\in [0,2]$. Although this can be done for probability measures on complete separable metric spaces (Polish spaces) $(\Sigma, d_\Sigma)$, and for $\ell\in BM^+(\Sigma)$, $\ell\in BC^+(\Sigma)$, $L^{\infty,+}(\Sigma,{\cal B}(\Sigma),\nu)$, we prefer to discuss the finite alphabet case to gain additional insight into these problems. At the end of this section we shall use the finite alphabet case to discuss the extensions to countable alphabet and to $\ell\in L^{\infty,+}(\Sigma,{\cal B}(\Sigma),\nu)$.

Consider the finite alphabet case $(\Sigma,{\cal M})$, where $\mbox{card}(\Sigma)=|\Sigma|$ is finite, ${\cal M}=2^{|\Sigma|}$. Thus, $\nu$ and $\mu$ are point mass distributions on $\Sigma$. Define the set of probability vectors on $\Sigma$ by
\vspace{-0.1cm}
\begin{align}
{\mathbb P}({\Sigma}) \tri \Big\{p =( p_1, \ldots, p_{|{\Sigma}|}):p_i\geq 0,i=0,\ldots,|\Sigma|, \sum_{i \in {\Sigma}} p_i =1 \Big\}.
\end{align}
 \noi Thus, $p\in {\mathbb P}({\Sigma})$ is a probability vector in ${\mathbb R}_+^{|\Sigma|}$. Also let $\ell\tri\{\ell_1,\ldots,\ell_{|\Sigma|}\}$ so that $\ell\in {\mathbb R}_+^{|\Sigma|}$ (e.g., set of non-negative vectors of dimension $|\Sigma|$).

\subsection{Problem \ref{problem1}: Finite Alphabet Case}
\label{subsec.problem1}
\noi Suppose $\nu \in {\mathbb P}({\Sigma})$ is the true probability vector and $\mu\in{\mathbb P}({\Sigma})$ is the nominal fixed probability vector. The extremum problem is defined by \bea  \label{prob.finite.case}D^+(R) \tri    \max_{ { \nu} \in  {\mathbb B}_R({ \mu})}  \sum_{i\in \Sigma} \ell_i\nu_i,
\eea
\vspace{-0.1cm}
\noi where
\vspace{-0.3cm}
\bea
{\mathbb B }_R({ \mu})   \tri \Big\{ {\bf \nu} \in {\mathbb P}(\Sigma)): ||{ \nu} -{ \mu}||_{TV}\triangleq\sum_{i\in\Sigma}|\nu_i-\mu_i| \leq R \Big\}. \label{b323}
\eea
\vspace{-0.1cm}
Next, we apply the results of Section \ref{char.extr.meas.} to characterize the optimal $\nu^*$ for any $R\in[0,2]$. By defining, $\xi_i \tri \nu_i-\mu_i, i=1, \ldots,|\Sigma|$ and $\xi \in {\mathbb M}_{0}(\Sigma)$,  Problem \ref{problem1} can be reformulated as follows. \bea  \label{reform.equation}\max_{\nu \in{\mathbb B }_R({ \mu})}\sum_{i\in\Sigma}\ell_i\nu_i\longrightarrow\sum_{i\in\Sigma}\ell_i \mu_i+\max_{\xi \in\widetilde{{\mathbb B }}_R({ \mu})}\sum_{i\in\Sigma} \ell_i \xi_i.\eea
\noi Note that $\xi \in\widetilde{{\mathbb B }}_R({ \mu})$ is described by the  constraints \bea \sum_{i \in \Sigma} |\xi_i|\leq R, \hso \sum_{i \in \Sigma} \xi_i=0,\hso 0\leq \xi_i + \mu_i\leq 1, \hso\forall i \in \Sigma. \eea

\noi The positive and negative variation of the signed measure $\xi$ are defined by
\[ {\xi_i^+\triangleq}\begin{cases}
\xi_i, & \mbox{if}~ \xi_i \geq 0 \\
0, & \mbox{if}~ \xi_i < 0,
\end{cases}
\qquad
{\xi_i^-\triangleq}\begin{cases}
0, & \mbox{if}~ \xi_i \geq 0 \\
-\xi_i, & \mbox{if}~ \xi_i < 0,
\end{cases}
\]

\noi Therefore,
\vspace{-0.3cm}
\bes
\sum_{i\in\Sigma}\xi_i=\sum_{i\in\Sigma}\xi_i^+-\sum_{i\in\Sigma}\xi_i^-,\hst\sum_{i\in\Sigma}|\xi_i|=\sum_{i\in\Sigma}\xi_i^++\sum_{i\in\Sigma}\xi_i^-, 
\ees

\noi and hence,
\bes \sum_{i\in\Sigma}\xi_i^+=\frac{\sum_{i\in\Sigma}\xi_i+\sum_{i\in\Sigma}|\xi_i|}{2}\equiv\frac{\alpha}{2},\hst\sum_{i\in\Sigma}\xi_i^-=\frac{-\sum_{i\in\Sigma}\xi_i+\sum_{i\in\Sigma}|\xi_i|}{2}\equiv\frac{\alpha}{2},
\ees

\noi and
\vspace{-0.3cm}
\bea
\sum_{i\in\Sigma} \xi_i=0, \hst\alpha=\sum_{i\in\Sigma}|\xi_i|\leq R.\eea
\noi In addition,
\vspace{-0.3cm}
\bea
\sum_{i\in\Sigma} \ell_i\xi_i=\sum_{i\in\Sigma} \ell_i\xi_i^+-\sum_{i\in\Sigma} \ell_i\xi_i^-.\eea

\noi Define the maximum and minimum values of the sequence $\{\ell_1,\hdots,\ell_{|\Sigma|}\}$ by $\ell_{\max}\triangleq \max_{i\in\Sigma}\ell_i$, $\ell_{\min}\triangleq \min_{i\in\Sigma}\ell_i$, and its corresponding support sets by
$\Sigma^0 \triangleq \{i\in \Sigma:\ell_i=\ell_{\max} \}$, $\Sigma_0\triangleq\{i\in \Sigma:\ell_i=\ell_{\min} \}$.
\noi For all remaining sequence, $\{\ell_i:i\in \Sigma \setminus \Sigma^0\cup\Sigma_0\}$, and for $1\leq r \leq |\Sigma\setminus\Sigma^0\cup\Sigma_0|$ define recursively
\vspace{-0.1cm}
\begin{align}\label{sigmasets1}
\Sigma_k \triangleq \left\{i\in \Sigma:\ell_i=\min\left\{\ell_{\alpha}: \alpha \in \Sigma\setminus \Sigma^0\cup\left(\bigcup_{j=1}^k\Sigma_{j-1}\right)\right\} \right\},\hst  k\in\{1,2,\hdots, r\},\end{align}
\noi till all the elements of $\Sigma$ are exhausted (i.e., $k$ is at most $|\Sigma\setminus\Sigma^0\cup\Sigma_0|$). Define the corresponding values of the sequence of sets in (\ref{sigmasets1}) by
 \bes \ell(\Sigma_k)\triangleq\min_{i\in\Sigma\setminus\Sigma^0\cup(\bigcup_{j=1}^k\Sigma_{j-1})}\ell_i,\hst k\in\{1,2,\hdots, r\},\ees
\noi where $r$ is the number of $\Sigma_k$ sets which is at most $|\Sigma\setminus\Sigma^0\cup\Sigma_0|$; for example, when $k=1$, $\ell\left(\Sigma_1\right)=\min_{i\in\Sigma\setminus\Sigma^0\cup\Sigma_0}\ell_i$. The following theorem characterizes the solution of Problem \ref{problem1}.

\begin{theorem}\label{thmproblm3.1}
\noi The solution of the finite alphabet version of Problem \ref{problem1} is given by
\begin{align} D^+(R)=\ell_{\max}\nu^*(\Sigma^0)+\ell_{\min}\nu^*(\Sigma_0)+\sum_{k=1}^r\ell(\Sigma_k)\nu^*(\Sigma_k).\label{mp}\end{align}
\noi Moreover, the optimal probabilities are given by
\begin{subequations}
\label{all3}
\begin{align}
& \nu^*(\Sigma^0)\triangleq\sum_{i\in\Sigma^0}\nu_i^*=\sum_{i\in\Sigma^0}\mu_i+\alpha,\label{all3a}\\
& \nu^*(\Sigma_0)\triangleq\sum_{i\in\Sigma_0}\nu_i^*=\left(\sum_{i\in\Sigma_0}\mu_i-\alpha\right)^+,\label{all3b}\\
& \nu^*(\Sigma_k)\triangleq\sum_{i\in\Sigma_k} \nu_i^*=\left(\sum_{i\in\Sigma_k} \mu_i-\left(\alpha-\sum_{j=1}^k\sum_{i\in\Sigma_{j-1}}\mu_i\right)^+\right)^+, \label{all3c}\\
&\alpha=\min\left(\frac{R}{2},1-\sum_{i\in\Sigma^0}\mu_i\right),\label{all3d}
\end{align}
\end{subequations}

\noi where, $k=1,2,\hdots,r$ and $r$ is the number of $\Sigma_k$ sets which is at most $|\Sigma\setminus\Sigma^0\cup\Sigma_0|$.
\end{theorem}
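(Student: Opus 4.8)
The plan is to treat the finite-alphabet problem as a linear program and solve it through the equivalent signed-measure formulation already set up in (\ref{reform.equation}). Writing $\xi_i = \nu_i-\mu_i$, the objective becomes $\sum_{i\in\Sigma}\ell_i\mu_i+\sum_{i\in\Sigma}\ell_i\xi_i$, and the feasible set is $\{\xi:\sum_i|\xi_i|\le R,\ \sum_i\xi_i=0,\ 0\le\mu_i+\xi_i\le 1\}$. Splitting $\xi_i=\xi_i^+-\xi_i^-$ and using $\sum_i\xi_i^+=\sum_i\xi_i^-\equiv\alpha$ (so that $\|\xi\|_{TV}=2\alpha\le R$, i.e.\ $\alpha\le R/2$, with $\alpha$ denoting the transported mass $\xi^+(\Sigma)$ as in (\ref{all3d})), I would reduce the task to the two decoupled subproblems
\[
\max\Big\{\textstyle\sum_i\ell_i\xi_i^+:\ \sum_i\xi_i^+=\alpha,\ 0\le\xi_i^+\le 1-\mu_i\Big\},\qquad
\min\Big\{\textstyle\sum_i\ell_i\xi_i^-:\ \sum_i\xi_i^-=\alpha,\ 0\le\xi_i^-\le\mu_i\Big\},
\]
together with the outer maximization over the transported mass $\alpha$.

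For the first subproblem, since $\ell\ge 0$ and the largest coefficient $\ell_{\max}$ is attained exactly on $\Sigma^0$, all the added mass should be placed on $\Sigma^0$; this is feasible precisely because the available capacity there is $1-\mu(\Sigma^0)$, which forces $\alpha\le 1-\mu(\Sigma^0)$ and yields the optimal value $\ell_{\max}\alpha$. Combining the two constraints on $\alpha$ gives $\alpha=\min(R/2,\,1-\mu(\Sigma^0))$, as in (\ref{all3d}); the outer maximization selects the largest admissible $\alpha$ because the net marginal gain of transporting one more unit of mass, $\ell_{\max}-\ell(\cdot)\ge 0$, is nonnegative. For the second (removal) subproblem, the mass $\alpha$ must be withdrawn from the points of least $\ell$-value first: I would remove all of $\mu(\Sigma_0)$, then all of $\mu(\Sigma_1)$, and so on up the ordered level sets $\Sigma_0,\Sigma_1,\dots,\Sigma_r$ defined in (\ref{sigmasets1}), stopping when the budget $\alpha$ is exhausted. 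This greedy ``water-filling'' is exactly what the nested positive parts in (\ref{all3b})--(\ref{all3c}) encode: the residual budget reaching $\Sigma_k$ is $\big(\alpha-\sum_{j=1}^k\sum_{i\in\Sigma_{j-1}}\mu_i\big)^+$, and the mass left on $\Sigma_k$ is that residual subtracted from $\mu(\Sigma_k)$, clipped below at zero.

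It remains to assemble the pieces and confirm optimality. I would first verify that the $\nu^*$ defined by (\ref{all3a})--(\ref{all3d}) is genuinely feasible, i.e.\ that it is a probability vector (the added mass $\alpha$ equals the total removed mass by construction, so $\nu^*(\Sigma)=1$) lying in ${\mathbb B}_R(\mu)$ (its total variation is $2\alpha\le R$), with every component in $[0,1]$ thanks to the positive-part clippings. Substituting $\nu^*$ into the objective and regrouping over the partition $\{\Sigma^0,\Sigma_0,\Sigma_1,\dots,\Sigma_r\}$ then produces (\ref{mp}) directly. The main obstacle is establishing that the greedy removal is optimal for the second subproblem, which calls for a standard exchange (rearrangement) argument: if any feasible removal places positive mass on a point with strictly larger $\ell$-value while a strictly smaller-value point still has spare capacity $\mu_i-\xi_i^->0$, then shifting an infinitesimal amount to the smaller-value point strictly decreases the removal cost, so at optimality the removed mass fills the level sets in increasing order of $\ell$. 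Matching this optimal allocation to the closed-form nested expressions in (\ref{all3c}), and checking the boundary case $\alpha=1-\mu(\Sigma^0)$ (where $R\ge R_{\max}$ and $\nu^*(\Sigma^0)=1$), is the only genuinely delicate bookkeeping; the remaining steps are routine.
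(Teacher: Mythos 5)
Your proposal is correct and follows essentially the same route as the paper: the paper's proof (Lemma \ref{lemmabound}, Proposition \ref{subcase1}, Lemma \ref{subcase2}, Corollary \ref{subcase3}) is exactly the decomposition into the $\xi^+$ subproblem (all added mass on $\Sigma^0$, capped by $\min(R/2,1-\mu(\Sigma^0))$) and the $\xi^-$ subproblem solved by greedy water-filling over $\Sigma_0,\Sigma_1,\dots,\Sigma_r$, with the achievable lower bounds in those lemmas playing the role of your exchange argument. The only cosmetic difference is that the paper writes the transported mass as $\alpha/2$ with $\alpha=\sum_i|\xi_i|$ in the intermediate lemmas, whereas you (consistently with \eqref{all3d}) use $\alpha$ for $\xi^+(\Sigma)$ directly.
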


\begin{proof}
\noi The derivation of the Theorem is based on a sequence of Lemmas, Propositions and Corollaries which are presented below.
\end{proof}

\noi The following Lemma is a direct consequence of Section \ref{subsec.Equiv.Extr.Prob}.

\begin{lemma}\label{lemmabound}
\noi Consider the finite alphabet version of Problem \ref{problem1}. Then the following bounds hold.\\
\noi 1. Upper Bound.
\vspace{-0.3cm}
\bea
\sum_{i\in\Sigma}\ell_i\xi_i^+\leq\ell_{\max}\left(\frac{\alpha}{2}\right). \eea
\noi The upper bound holds with equality if \bea\sum_{i\in\Sigma^0} \mu_i+\frac{\alpha}{2}\leq 1,\hso\sum_{i\in\Sigma^0}\xi_i^+=\frac{\alpha}{2},\hso\xi_i^+=0\hso \mbox{for}\hso i\in\Sigma\setminus\Sigma^0,\eea
\noi and the optimal probability on $\Sigma^0$ is given by
\bea\label{opt.prob.on.S^0}\nu^*(\Sigma^0)\triangleq \sum_{i\in\Sigma^0}\nu_i^*=\min\left(1,\sum_{i\in\Sigma^0}\mu_i+\frac{\alpha}{2}\right).\eea

\noi 2. Lower Bound.
\vspace{-0.3cm}
\bea \sum_{i\in\Sigma}\ell_i\xi_i^-\geq\ell_{\min}\left(\frac{\alpha}{2}\right). \eea
\noi The lower bound holds with equality if \bea\sum_{i\in \Sigma_0}\mu_i-\frac{\alpha}{2}\geq 0,\hso \sum_{i\in \Sigma_0}\xi_i^-=  \frac{\alpha}{2},\hso \xi_i^-=0\hso\mbox{for}\hso i\in \Sigma\setminus\Sigma_0,\eea
\noi and the optimal probability on $\Sigma_0$ is given by
\bea\label{opt.prob.on.S_0}\nu^*(\Sigma_0)\triangleq \sum_{i\in\Sigma_0}\nu_i^*=\left(\sum_{i\in\Sigma_0}\mu_i-\frac{\alpha}{2}\right)^+.\eea\\
\noi Moreover, under the conditions in 1 and 2 the maximum pay-off is given by
\begin{align}\label{opt.pay.off1} D^+(R)= \frac{\alpha}{2}\{\ell_{\max}-\ell_{\min} \}+\sum_{i\in\Sigma}\ell_i\mu_i.\end{align}
\end{lemma}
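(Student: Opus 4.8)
The plan is to establish the two bounds by a single termwise majorization each, characterize when equality holds, translate the equality configurations back into the probability simplex to read off $\nu^*(\Sigma^0)$ and $\nu^*(\Sigma_0)$, and finally subtract the lower bound from the upper bound to obtain the pay-off formula (\ref{opt.pay.off1}).

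First I would prove the upper bound. Since $\xi_i^+ \geq 0$ for every $i$ and $\ell_i \leq \ell_{\max}$ by definition of $\ell_{\max}$, termwise majorization gives $\sum_{i\in\Sigma} \ell_i \xi_i^+ \leq \ell_{\max} \sum_{i\in\Sigma} \xi_i^+ = \ell_{\max}(\alpha/2)$, where the final equality is the identity $\sum_{i\in\Sigma}\xi_i^+ = \alpha/2$ established just above the lemma. This inequality is tight precisely when $\xi_i^+ = 0$ for every index with $\ell_i < \ell_{\max}$, i.e. for all $i \in \Sigma \setminus \Sigma^0$, which forces all the positive variation onto $\Sigma^0$, so $\sum_{i\in\Sigma^0}\xi_i^+ = \alpha/2$. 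The lower bound is entirely symmetric: $\ell_i \geq \ell_{\min}$ and $\xi_i^- \geq 0$ give $\sum_{i\in\Sigma}\ell_i\xi_i^- \geq \ell_{\min}(\alpha/2)$, with equality iff the negative variation is supported on $\Sigma_0$, so $\sum_{i\in\Sigma_0}\xi_i^- = \alpha/2$.

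Next I would convert these equality configurations into probabilities. On $\Sigma^0$ the signed measure is nonnegative, $\xi_i = \xi_i^+$, so $\nu^*(\Sigma^0) = \mu(\Sigma^0) + \sum_{i\in\Sigma^0}\xi_i^+ = \mu(\Sigma^0) + \alpha/2$, and the simplex constraint $0 \leq \nu_i \leq 1$ is exactly what produces the feasibility requirement $\sum_{i\in\Sigma^0}\mu_i + \alpha/2 \leq 1$; when that fails, the total mass on $\Sigma^0$ can rise only to $1$, giving $\nu^*(\Sigma^0) = \min(1, \sum_{i\in\Sigma^0}\mu_i + \alpha/2)$, which is (\ref{opt.prob.on.S^0}). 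Symmetrically, on $\Sigma_0$ the signed measure is nonpositive, $\xi_i = -\xi_i^-$, so $\nu^*(\Sigma_0) = \mu(\Sigma_0) - \alpha/2$, and the nonnegativity constraint $\nu_i \geq 0$ forces the feasibility condition $\sum_{i\in\Sigma_0}\mu_i - \alpha/2 \geq 0$ together with the truncation $\nu^*(\Sigma_0) = (\sum_{i\in\Sigma_0}\mu_i - \alpha/2)^+$, i.e. (\ref{opt.prob.on.S_0}).

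Finally I would assemble the pay-off. By the reformulation (\ref{reform.equation}), $D^+(R) = \sum_{i\in\Sigma}\ell_i\mu_i + \max_{\xi}\big(\sum_{i\in\Sigma}\ell_i\xi_i^+ - \sum_{i\in\Sigma}\ell_i\xi_i^-\big)$. Combining the upper bound on the positive part with the lower bound on the negative part yields $\sum_{i\in\Sigma}\ell_i\xi_i^+ - \sum_{i\in\Sigma}\ell_i\xi_i^- \leq (\alpha/2)(\ell_{\max} - \ell_{\min})$, and under the two equality configurations above (positive variation on $\Sigma^0$, negative variation on $\Sigma_0$) this is attained, giving (\ref{opt.pay.off1}). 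I expect the only delicate point to be the feasibility bookkeeping: one must check that the two equality configurations are simultaneously realizable by a single admissible $\xi$, that is, that concentrating $\xi^+$ on $\Sigma^0$ and $\xi^-$ on $\Sigma_0$ respects $0 \leq \mu_i + \xi_i \leq 1$ along with $\sum_{i\in\Sigma}\xi_i = 0$. The $\min(1,\cdot)$ and $(\cdot)^+$ truncations are precisely the residues of these simplex constraints, and the care is needed when $\alpha$ grows large enough that the cap $\nu^*(\Sigma^0)=1$ activates, which is the regime the recursive sets $\Sigma_k$ of the full theorem are designed to absorb.
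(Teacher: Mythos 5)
Your proposal is correct and follows essentially the same route as the paper: the paper's proof of this lemma is simply a pointer to Section \ref{subsec.Equiv.Extr.Prob}, where the same Jordan-decomposition bound $\sum_i \ell_i\xi_i^+ \le \ell_{\max}\xi^+(\Sigma)$ (resp. $\ge \ell_{\min}\xi^-(\Sigma)$) is derived and achieved by concentrating the positive variation on $\Sigma^0$ and the negative variation on $\Sigma_0$. Your termwise majorization, the translation of the equality configuration into $\nu^*(\Sigma^0)$, $\nu^*(\Sigma_0)$ with the simplex truncations, and the assembly of the pay-off via (\ref{reform.equation}) all match the paper's argument.
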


\begin{proof}
\noi Follows from Section \ref{subsec.Equiv.Extr.Prob}.
\end{proof}

\begin{proposition}\label{subcase1}
\noi If $\sum_{i\in\Sigma^0}\mu_i+\frac{\alpha}{2}=1$ then $D^+(R)=\ell_{\max}$.
\end{proposition}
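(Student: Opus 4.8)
The plan is to recognize the hypothesis as the threshold at which the optimal mass on $\Sigma^0$ saturates the probability simplex, so that the extremum measure concentrates entirely on the set where $\ell$ is maximal. First I would substitute $\sum_{i\in\Sigma^0}\mu_i+\frac{\alpha}{2}=1$ into the expression (\ref{opt.prob.on.S^0}) for the optimal probability on $\Sigma^0$ supplied by Lemma~\ref{lemmabound}, which gives $\nu^*(\Sigma^0)=\min\left(1,\sum_{i\in\Sigma^0}\mu_i+\frac{\alpha}{2}\right)=\min(1,1)=1$.

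Since $\nu^*\in{\mathbb P}(\Sigma)$ has nonnegative components summing to one, $\nu^*(\Sigma^0)=\sum_{i\in\Sigma^0}\nu_i^*=1$ forces $\nu_i^*=0$ for every $i\in\Sigma\setminus\Sigma^0$; in particular $\nu^*(\Sigma_0)=0$ and $\nu^*(\Sigma_k)=0$ for each $k=1,\dots,r$. The attained payoff is then $\sum_{i\in\Sigma}\ell_i\nu_i^*=\ell_{\max}\sum_{i\in\Sigma^0}\nu_i^*=\ell_{\max}\,\nu^*(\Sigma^0)=\ell_{\max}$, using $\ell_i=\ell_{\max}$ on $\Sigma^0$; equivalently this is what the representation (\ref{mp}) of Theorem~\ref{thmproblm3.1} returns after the degenerate values above are inserted. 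To confirm that this is the maximum and not merely an attained value, I would note the elementary bound $\sum_{i\in\Sigma}\ell_i\nu_i\le\ell_{\max}\sum_{i\in\Sigma}\nu_i=\ell_{\max}$ valid for every $\nu\in{\mathbb P}(\Sigma)$, so that $D^+(R)\le\ell_{\max}$, whence $D^+(R)=\ell_{\max}$.

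There is no genuine obstacle here, as the statement is the saturating boundary case of the general solution; its whole content is the single substitution that drives the minimum in (\ref{opt.prob.on.S^0}) up to its clamp value $1$. The only point needing mild care is that one should \emph{not} invoke the closed-form payoff (\ref{opt.pay.off1}), whose derivation assumed the equality condition $\sum_{i\in\Sigma_0}\mu_i-\frac{\alpha}{2}\ge0$ of Lemma~\ref{lemmabound}; the hypothesis here forces $\frac{\alpha}{2}=\sum_{i\in\Sigma\setminus\Sigma^0}\mu_i$, which generally makes $\sum_{i\in\Sigma_0}\mu_i-\frac{\alpha}{2}\le0$ and hence violates that condition. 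Arguing from $\nu^*(\Sigma^0)=1$ together with the grouped payoff (\ref{mp}) sidesteps this.
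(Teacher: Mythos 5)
Your argument is correct and follows essentially the same route as the paper's proof: the hypothesis forces $\nu^*(\Sigma^0)=1$ via the clamp in \eqref{opt.prob.on.S^0}, hence $\nu_i^*=0$ off $\Sigma^0$ and the pay-off collapses to $\ell_{\max}$. Your added observation that $\sum_{i\in\Sigma}\ell_i\nu_i\le\ell_{\max}$ for every $\nu\in{\mathbb P}(\Sigma)$ (so the attained value really is the supremum) is a small but worthwhile tightening that the paper leaves implicit, as is your caution about not reusing \eqref{opt.pay.off1} outside its hypotheses.
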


\begin{proof}
\noi Under the stated condition $\sum_{i\in\Sigma^0}\nu^*_i=1$  and therefore $\sum_{i\in\Sigma\setminus\Sigma^0}\nu^*_i=0 $, hence $\nu^*_i=0,$ for all $i\in\Sigma\setminus\Sigma^0 $. Then the maximum pay-off (\ref{prob.finite.case}) is given by
\begin{align}
D^+(R)&=\sum_{i\in\Sigma^0}\ell_i\nu^*_i+\sum_{i\in\Sigma\setminus\Sigma^0}\ell_i\nu^*_i=\ell_{\max}\sum_{i\in\Sigma^0}\nu^*_i=\ell_{\max}. \nonumber
\end{align}
\end{proof}
%
%
%
\vspace{-0.3cm}
\noi The lower bound of Lemma \ref{lemmabound} characterize the extremum solution for $\sum_{i\in\Sigma_0}\mu_i-\frac{\alpha}{2}\geq 0$. Next, the characterization of extremum solution is discussed when this condition is violated.

\begin{lemma}\label{subcase2}
\noi If $\sum_{i\in\Sigma_0}\mu_i-\frac{\alpha}{2}\leq 0$, then \bea\label{eq1.subcase.2}\sum_{i\in\Sigma}\ell_i\xi_i^-\geq \ell(\Sigma_1)\left(\frac{\alpha}{2}-\sum_{i\in\Sigma_0}\mu_i\right)+\ell_{\min}\sum_{i\in\Sigma_0}\mu_i.\eea

\noi Moreover, equality holds if
\vspace{-0.3cm}
\begin{subequations}
\label{all1}
\begin{align}
&\sum_{i\in\Sigma_0}\xi_i^-=\sum_{i\in\Sigma_0}\mu_i,\label{all1a}\\
&\sum_{i\in \Sigma_1}\xi_i^-=\left(\frac{\alpha}{2}-\sum_{i\in\Sigma_0}\mu_i\right),\label{all1b}\\
&\sum_{i\in\Sigma_0}\mu_i+\sum_{i\in\Sigma_1}\mu_1\geq \frac{\alpha}{2},\label{all1c}\\
&\xi_i^-=0 \hso\mbox{for all}\hso i\in\Sigma\setminus\Sigma_0\cup\Sigma_1,
\end{align}
\end{subequations}
\noi and the optimal probability on $\Sigma_1$ is given by \bea \label{opt.prob.sigma1} \sum_{i\in\Sigma_1}\nu^*_i=\left(\sum_{i\in\Sigma_1}\mu_i-\left(\frac{\alpha}{2}-\sum_{i\in\Sigma_0}\mu_i\right)^+\right)^+. \eea

\end{lemma}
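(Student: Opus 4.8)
The plan is to read the bound (\ref{eq1.subcase.2}) as the solution of a small linear program: among all admissible negative variations we are minimizing the weighted mass $\sum_{i\in\Sigma}\ell_i\xi_i^-$ subject to the total negative mass being fixed at $\sum_{i\in\Sigma}\xi_i^-=\frac{\alpha}{2}$ and the per-coordinate caps $0\le\xi_i^-\le\mu_i$. The cap is the one structural fact that must be extracted first: on any coordinate where $\xi_i<0$ feasibility of $\nu$ gives $\nu_i=\mu_i-\xi_i^-\ge 0$, hence $\xi_i^-\le\mu_i$. Since the objective weights by $\ell_i$ and we want it small, the optimal allocation loads as much negative mass as possible onto the cheapest coordinates first; this is a standard rearrangement (capped water-filling) argument.

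First I would record the weight ordering that drives the bound: $\ell_i=\ell_{\min}$ for $i\in\Sigma_0$, while $\ell_i\ge\ell(\Sigma_1)$ for every $i\notin\Sigma_0$ (points of $\Sigma^0$ satisfy $\ell_i=\ell_{\max}\ge\ell(\Sigma_1)$, and by definition $\ell(\Sigma_1)=\min_{i\in\Sigma\setminus\Sigma^0\cup\Sigma_0}\ell_i$). Splitting the objective at $\Sigma_0$ and applying this ordering gives
\begin{align*}
\sum_{i\in\Sigma}\ell_i\xi_i^-=\ell_{\min}\sum_{i\in\Sigma_0}\xi_i^-+\sum_{i\notin\Sigma_0}\ell_i\xi_i^-\ge \ell_{\min}\,s+\ell(\Sigma_1)\Big(\frac{\alpha}{2}-s\Big),
\end{align*}
where $s\triangleq\sum_{i\in\Sigma_0}\xi_i^-$ and I have used $\sum_{i\notin\Sigma_0}\xi_i^-=\frac{\alpha}{2}-s$. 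The right side equals $\ell(\Sigma_1)\frac{\alpha}{2}+(\ell_{\min}-\ell(\Sigma_1))s$; since $\ell_{\min}-\ell(\Sigma_1)\le 0$ this is nonincreasing in $s$, so it is minimized by taking $s$ as large as the caps permit. The caps force $s\le\sum_{i\in\Sigma_0}\mu_i$, and the hypothesis $\sum_{i\in\Sigma_0}\mu_i\le\frac{\alpha}{2}$ guarantees this value of $s$ is simultaneously within the total budget $\frac{\alpha}{2}$. Substituting $s=\sum_{i\in\Sigma_0}\mu_i$ yields exactly (\ref{eq1.subcase.2}).

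For the equality clause I would identify when each inequality above is tight. The step $\sum_{i\notin\Sigma_0}\ell_i\xi_i^-\ge\ell(\Sigma_1)\sum_{i\notin\Sigma_0}\xi_i^-$ is tight precisely when all negative mass outside $\Sigma_0$ sits on coordinates attaining $\ell(\Sigma_1)$, i.e.\ $\xi_i^-=0$ for $i\in\Sigma\setminus\Sigma_0\cup\Sigma_1$; the optimal choice $s=\sum_{i\in\Sigma_0}\mu_i$ is the content of (\ref{all1a}); and (\ref{all1b}) is then forced by the budget constraint. Feasibility of (\ref{all1b}) in turn requires $\Sigma_1$ to carry enough nominal mass, namely $\sum_{i\in\Sigma_1}\mu_i\ge\frac{\alpha}{2}-\sum_{i\in\Sigma_0}\mu_i$, which is (\ref{all1c}). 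Finally, on $\Sigma_1$ we have $\xi_i\le 0$, so $\nu_i^*=\mu_i-\xi_i^-$ and summing gives $\sum_{i\in\Sigma_1}\nu_i^*=\sum_{i\in\Sigma_1}\mu_i-\big(\frac{\alpha}{2}-\sum_{i\in\Sigma_0}\mu_i\big)$; wrapping the quantity $\frac{\alpha}{2}-\sum_{i\in\Sigma_0}\mu_i$ (nonnegative under the hypothesis) and the whole expression in positive parts — harmless here but consistent with the general recursion (\ref{all3c}) — produces (\ref{opt.prob.sigma1}).

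The argument is essentially routine once framed as capped water-filling; the only points needing genuine care are the derivation of the cap $\xi_i^-\le\mu_i$ from $\nu\in{\mathbb P}(\Sigma)$ and the bookkeeping showing the minimizing split point $s=\sum_{i\in\Sigma_0}\mu_i$ is feasible exactly under the stated hypothesis $\sum_{i\in\Sigma_0}\mu_i\le\frac{\alpha}{2}$. The main thing to watch is the consistency of the positive-part conventions, so that (\ref{opt.prob.sigma1}) coincides with the $k=1$ instance of the recursive formula (\ref{all3c}) in Theorem \ref{thmproblm3.1}.
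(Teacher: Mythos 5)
Your proposal is correct and follows essentially the same route as the paper: both lower-bound $\sum_{i\in\Sigma\setminus\Sigma_0}\ell_i\xi_i^-$ by $\ell(\Sigma_1)$ times the remaining negative mass and then check that the stated conditions make each inequality tight. If anything, your explicit derivation of the cap $\xi_i^-\leq\mu_i$ from $\nu_i\geq 0$ and the monotone-in-$s$ argument make the passage from $\sum_{i\in\Sigma_0}\xi_i^-$ to $\sum_{i\in\Sigma_0}\mu_i$ cleaner than the paper's own write-up, which substitutes that equality into the lower bound without comment.
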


\begin{proof}
\noi First, we show that inequality holds.
\begin{align*}
\sum_{i\in\Sigma\setminus\Sigma_0}\ell_i\xi_i^-\geq\min_{i\in\Sigma\setminus\Sigma_0}\ell_i\sum_{i\in\Sigma\setminus\Sigma_0}\xi_i^-=\ell(\Sigma_1)\sum_{i\in\Sigma\setminus\Sigma_0}\xi_i^-=\ell(\Sigma_1)\left(\sum_{i\in\Sigma}\xi_i^--\sum_{i\in\Sigma_0}\xi_i^-\right).
\end{align*}
\noi Hence,
\vspace{-0.1cm}
\begin{align*}
&\sum_{i\in\Sigma}\ell_i\xi_i^--\sum_{i\in\Sigma_0}\ell_i\xi_i^-\geq\ell(\Sigma_1)\left(\frac{\alpha}{2}-\sum_{i\in\Sigma_0}\mu_i\right),
\end{align*}
\noi which implies
\begin{align*}
&\sum_{i\in\Sigma}\ell_i\xi_i^-\geq\ell(\Sigma_1)\left(\frac{\alpha}{2}-\sum_{i\in \Sigma_0}\mu_i\right)+\ell_{\min}\sum_{i\in\Sigma_0}\mu_i,
\end{align*}

\noi establishing (\ref{eq1.subcase.2}). Next, we show under the stated conditions that equality holds.
\begin{align} \sum_{i\in\Sigma}\ell_i\xi_i^-&=\sum_{i\in\Sigma_0}\ell_i\xi_i^-+\sum_{i\in\Sigma_1}\ell_i\xi_i^-+\sum_{i\in\Sigma\setminus\Sigma_0\cup\Sigma_1}\ell_i\xi_i^-\nonumber\\
&=\ell_{\min}\sum_{i\in\Sigma_0}\mu_i+\ell(\Sigma_1)\sum_{i\in\Sigma_1}\xi_i^-=\ell_{\min}\sum_{i\in\Sigma_0}\mu_i+\ell(\Sigma_1)\left(\frac{\alpha}{2}-\sum_{i\in\Sigma_0}\mu_i\right).\nonumber
\end{align}

\noi From (\ref{all1b}) we have that
\bea \sum_{i\in\Sigma_1}\xi_i^-=\left(\frac{\alpha}{2}-\sum_{i\in\Sigma_0}\mu_i\right),\eea
\noi and hence,
\bea \sum_{i\in\Sigma_1}\nu_i=\sum_{i\in\Sigma_1}\mu_i-\left(\frac{\alpha}{2}-\sum_{i\in\Sigma_0}\mu_i\right).\eea

\noi The optimal $\sum_{i\in\Sigma_1}\nu_i$ must satisfy $\frac{\alpha}{2}-\sum_{i\in\Sigma_0}\mu_i\geq 0$ and $\sum_{i\in\Sigma_1}\mu_i+\sum_{i\in\Sigma_0}\mu_i-\frac{\alpha}{2}\geq0$. Hence, (\ref{opt.prob.sigma1}) is obtained.
\end{proof}

\noi Following the previous Lemma, which characterizes the extremum solution when $\sum_{i\in\Sigma_0}\mu_i-\frac{\alpha}{2}\leq 0$, one can also characterize the optimum solution of extremum Problem \ref{problem1}, when $\sum_{j=1}^k\sum_{i\in\Sigma_{j-1}}\mu_i-\frac{\alpha}{2}\leq 0$, for any $k\in\{1,2,\hdots,r\}$.

\begin{corollary}\label{subcase3}
\noi For any $k\in\{1,2,\hdots,r\}$, if $\sum_{j=1}^k\sum_{i\in\Sigma_{j-1}}\mu_i-\frac{\alpha}{2}\leq 0$ then
\bea\sum_{i\in\Sigma}\ell_i\xi_i^-\geq \ell(\Sigma_k)\left(\frac{\alpha}{2}-\sum_{j=1}^k\sum_{i\in\Sigma_{j-1}}\mu_i\right)+\sum_{j=1}^k\sum_{i\in\Sigma_{j-1}}\ell_i\mu_i.\eea

\noi Moreover, equality holds if
\begin{subequations}
\label{all2}
\begin{align}
&\sum_{i\in\Sigma_{j-1}}\xi_i^-=\sum_{i\in\Sigma_{j-1}}\mu_i, \hso\mbox{for all}\hso j=1,2,\hdots,k,\label{all2a}\\
&\sum_{i\in \Sigma_k}\xi_i^-=\left(\frac{\alpha}{2}-\sum_{j=1}^k\sum_{i\in\Sigma_{j-1}}\mu_i\right),\label{all2b}\\
&\sum_{j=0}^k\sum_{i\in\Sigma_{j}}\mu_i\geq \frac{\alpha}{2},\label{all2c}\\
&\xi_i^-=0 \hso\mbox{for all}\hso i\in\Sigma\setminus\Sigma_0\cup\Sigma_1\cup\hdots\cup\Sigma_k,
\end{align}
\end{subequations}
\noi and the optimal probability on $\Sigma_k$ sets is given by \bea \label{opt.prob.sigmak} \sum_{i\in\Sigma_k}\nu^*_i=\left(\sum_{i\in\Sigma_k}\mu_i-\left(\frac{\alpha}{2}-\sum_{j=1}^k\sum_{i\in\Sigma_{j-1}}\mu_i\right)^+\right)^+. \eea

\end{corollary}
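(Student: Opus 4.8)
The plan is to prove the asserted lower bound for every admissible negative variation $\xi^-$ and then check that the configuration (\ref{all2}) attains it; this directly generalizes the base case $k=1$ treated in Lemma \ref{subcase2} (an induction on $k$ is possible, but the uniform estimate below is cleaner).

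First I would partition the index set as $\Sigma=S\cup(\Sigma\setminus S)$ with $S\triangleq\bigcup_{j=1}^{k}\Sigma_{j-1}$, the union of the $k$ sets carrying the smallest values of $\ell$. Every $i\in\Sigma\setminus S$ obeys $\ell_i\ge\ell(\Sigma_k)$ --- on $\Sigma^0$ the value is $\ell_{\max}$, and on the rest it is at least $\min_{i\in\Sigma\setminus(\Sigma^0\cup S)}\ell_i=\ell(\Sigma_k)$ --- so, using $\xi_i^-\ge0$ and $\sum_{i\in\Sigma}\xi_i^-=\frac{\alpha}{2}$,
\begin{align*}
\sum_{i\in\Sigma}\ell_i\xi_i^-=\sum_{i\in S}\ell_i\xi_i^-+\sum_{i\in\Sigma\setminus S}\ell_i\xi_i^-\ge \sum_{i\in S}\ell_i\xi_i^-+\ell(\Sigma_k)\Big(\frac{\alpha}{2}-\sum_{i\in S}\xi_i^-\Big).
\end{align*}
Subtracting the asserted bound $\ell(\Sigma_k)\big(\frac{\alpha}{2}-\sum_{i\in S}\mu_i\big)+\sum_{i\in S}\ell_i\mu_i$ from this right-hand side leaves $\sum_{i\in S}(\ell_i-\ell(\Sigma_k))(\xi_i^--\mu_i)$, and each summand is a product of two nonpositive numbers: $\ell_i\le\ell(\Sigma_k)$ for $i\in S$, while the admissibility requirement $\nu_i=\mu_i-\xi_i^-\ge0$ forces $\xi_i^--\mu_i\le0$. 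Hence the difference is nonnegative and the lower bound of the corollary follows. The hypothesis $\sum_{j=1}^{k}\sum_{i\in\Sigma_{j-1}}\mu_i\le\frac{\alpha}{2}$ is what makes this the operative bound: the negative mass $\frac{\alpha}{2}$ cannot be accommodated inside $S$ and must spill onto $\Sigma_k$.

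For the equality statement I would substitute the configuration (\ref{all2}): saturating $\xi_i^-=\mu_i$ on each $\Sigma_{j-1}$, $j\le k$, makes every term $(\ell_i-\ell(\Sigma_k))(\xi_i^--\mu_i)$ vanish, and placing the leftover mass $\frac{\alpha}{2}-\sum_{j=1}^{k}\sum_{i\in\Sigma_{j-1}}\mu_i$ on $\Sigma_k$ (where $\ell\equiv\ell(\Sigma_k)$) with $\xi_i^-=0$ off $\Sigma_0\cup\cdots\cup\Sigma_k$ turns the first inequality into an equality; condition (\ref{all2c}) guarantees feasibility, since it states precisely that this leftover mass fits within $\Sigma_0\cup\cdots\cup\Sigma_k$. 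Reading off $\nu^*(\Sigma_k)=\sum_{i\in\Sigma_k}\mu_i-\sum_{i\in\Sigma_k}\xi_i^-$ then yields (\ref{opt.prob.sigmak}). I expect the only genuine obstacle to be the bookkeeping of the nested $(\cdot)^+$ operators in (\ref{opt.prob.sigmak}): one must separately verify the degenerate regimes where $\frac{\alpha}{2}$ is exhausted before $\Sigma_k$ is reached (the inner positive part vanishes and $\nu^*(\Sigma_k)=\mu(\Sigma_k)$) and where $\Sigma_k$ is fully drained (the outer positive part vanishes), and confirm that the nested expression reproduces both while respecting $0\le\nu_i^*\le1$.
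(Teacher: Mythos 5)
Your proof is correct and follows essentially the same route as the paper's: split the sum over $S=\bigcup_{j=1}^{k}\Sigma_{j-1}$ and its complement, bound the complement's contribution from below by $\ell(\Sigma_k)$ times its negative mass $\frac{\alpha}{2}-\sum_{i\in S}\xi_i^-$, and then check that the configuration \eqref{all2} saturates every inequality, with the nested positive parts in \eqref{opt.prob.sigmak} covering the degenerate regimes exactly as you describe. The one point where you do better than the paper is the comparison step: the paper replaces $\sum_{i\in S}\xi_i^-$ by $\sum_{i\in S}\mu_i$ inside the $\ell(\Sigma_k)(\cdot)$ factor and then, in a separate "which implies" step, also replaces $\sum_{i\in S}\ell_i\xi_i^-$ by $\sum_{i\in S}\ell_i\mu_i$; the second replacement goes the wrong way on its own (since $\xi_i^-\leq\mu_i$ and $\ell_i\geq 0$), and the two substitutions are only jointly legitimate because their combined effect is $\sum_{i\in S}\bigl(\ell_i-\ell(\Sigma_k)\bigr)\bigl(\xi_i^--\mu_i\bigr)\geq 0$ --- which is precisely the factorization you supply. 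So your argument is the rigorous version of the paper's chain of inequalities rather than a different method.
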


\begin{proof}
\noi Consider any $k\in\{1,2,\ldots,r\}$. First, we show that inequality holds. From lower bound we have that
\begin{align*}
\sum_{i\in\Sigma\setminus\displaystyle\cup_{j=1}^k\Sigma_{j-1}}\ell_i\xi_i^-&\geq\min_{i\in\Sigma\setminus\displaystyle\cup_{j=1}^k\Sigma_{j-1}}\ell_i\sum_{i\in\Sigma\setminus\displaystyle\cup_{j=1}^k\Sigma_{j-1}}\xi_i^-\\
&=\ell(\Sigma_k)\sum_{i\in\Sigma\setminus\displaystyle\cup_{j=1}^k\Sigma_{j-1}}\xi_i^-=\ell(\Sigma_k)\left(\sum_{i\in\Sigma}\xi_i^--\sum_{j=1}^k\sum_{i\in\Sigma_{j-1}}\xi_i^-\right).
\end{align*}

\noi Hence, \begin{align*}\sum_{i\in\Sigma}\ell_i\xi_i^--\sum_{j=1}^k\sum_{i\in\Sigma_{j-1}}\ell_i\xi_i^-\geq \ell(\Sigma_k)\left(\frac{\alpha}{2}-\sum_{j=1}^k\sum_{i\in\Sigma_{j-1}}\mu_i\right),\end{align*}

\noi which implies  \begin{align*}
\sum_{i\in\Sigma}\ell_i\xi_i^-\geq\ell(\Sigma_k)\left(\frac{\alpha}{2}-\sum_{j=1}^k\sum_{i\in\Sigma_{j-1}}\mu_i\right)+\sum_{j=1}^k\sum_{i\in\Sigma_{j-1}}\ell_i\mu_i.
\end{align*}

\noi Next, we show under the stated conditions that equality holds.
\begin{align*} \sum_{i\in\Sigma}\ell_i\xi_i^-&=\sum_{j=1}^k\sum_{i\in\Sigma_{j-1}}\ell_i\xi_i^-+\sum_{i\in\Sigma_k}\ell_i\xi_i^-+\sum_{i\in\Sigma\setminus\displaystyle\cup_{j=0}^k\Sigma_{j}}\ell_i\xi_i^-\\
&=\sum_{j=1}^k\ell(\Sigma_{j-1})\sum_{i\in\Sigma_{j-1}}\xi_i^-+\ell(\Sigma_k)\sum_{i\in\Sigma_k}\xi_i^-
=\sum_{j=1}^k\sum_{i\in\Sigma_{j-1}}\ell_i\mu_i+\ell(\Sigma_k)\left(\frac{\alpha}{2}-\sum_{j=1}^k\sum_{i\in\Sigma_{j-1}}\mu_i\right).
\end{align*}

\noi From (\ref{all2b}) we have that
\vspace{-0.0cm}
\bea \sum_{i\in \Sigma_k}\xi_i^-=\left(\frac{\alpha}{2}-\sum_{j=1}^k\sum_{i\in\Sigma_{j-1}}\mu_i\right),\eea
\noi and hence,
\vspace{-0.3cm}
\bea \sum_{i\in\Sigma_k}\nu_i=\sum_{i\in\Sigma_k}\mu_i-\left(\frac{\alpha}{2}-\sum_{j=1}^k\sum_{i\in\Sigma_{j-1}}\mu_i\right).\eea

\noi The optimal $\sum_{i\in\Sigma_k}\nu_i^*$ must satisfy $\frac{\alpha}{2}-\sum_{j=1}^k\sum_{i\in\Sigma_{j-1}}\mu_i\geq 0$ and $\sum_{j=0}^k\sum_{i\in\Sigma_j}\mu_i-\frac{\alpha}{2}\geq0$. Hence, (\ref{opt.prob.sigmak}) is obtained.
\end{proof}

\noi Putting together Lemma \ref{lemmabound}, Proposition \ref{subcase1}, Lemma \ref{subcase2}, and Corollary \ref{subcase3} we obtain the result of Theorem \ref{thmproblm3.1}. Notice that the solution of Problem \ref{problem1} finds the partition of $\Sigma$ into disjoint sets $\{\Sigma^0,\Sigma_0,\Sigma_1,\ldots,\Sigma_k\}$, where $\Sigma=\Sigma^0\cup\Sigma_0\cup\Sigma_1\cup\ldots\cup\Sigma_k$, and the optimal measure $\nu^*(\cdot)$ on these sets.

\subsection{Problem \ref{problem2}: Finite Alphabet Case}
\noi Consider Problem \ref{problem2}, and follow the procedure utilized to derive the solution of Problem \ref{problem1} (e.g., Section \ref{subsec.problem1}).
\noi Let $\xi_i\triangleq \nu_i-\mu_i\equiv \xi_i^+-\xi_i^-$, be the signed measure decomposition of $\xi$. We know that, $\sum_{i\in\Sigma}\xi_i=0$ and so, $\sum_{i\in\Sigma}\xi_i^+=\sum_{i\in\Sigma}\xi_i^-$. Also
\bea \sum_{i\in\Sigma}|\nu_i-\mu_i|=\sum_{i\in\Sigma}|\xi_i|=\sum_{i\in\Sigma}\xi_i^++\sum_{i\in\Sigma}\xi_i^-=\alpha,\hso \sum_{i\in\Sigma}\xi_i^+=\sum_{i\in\Sigma}\xi_i^-=\frac{\alpha}{2}.\eea

\noi The average constraint can be written as follows
\begin{align}\label{alt.dual.min.average.constraint}
\sum_{i\in\Sigma}\ell_i\nu_i&=\sum_{i\in\Sigma}\ell_i(\xi_i+\mu_i)=\sum_{i\in\Sigma}\ell_i\xi_i+\sum_{i\in\Sigma}\ell_i\mu_i
=\sum_{i\in\Sigma}\ell_i\xi_i^+-\sum_{i\in\Sigma}\ell_i\xi_i^-+\sum_{i\in\Sigma}\ell_i\mu_i\leq D.
\end{align}

\noi Define the maximum and minimum values of the sequence by $\ell_{\max}\triangleq \max_{i\in\Sigma}\ell_i$, $\ell_{\min}\triangleq \min_{i\in\Sigma}\ell_i$ and its corresponding support sets by
$\Sigma^0 \triangleq \{i\in \Sigma:\ell_i=\ell_{\max} \}$, $\Sigma_0\triangleq\{i\in \Sigma:\ell_i=\ell_{\min} \}$.
\noi For all remaining sequence, $\{\ell_i:i\in \Sigma \setminus \Sigma^0\cup\Sigma_0\}$, and for $1\leq r \leq |\Sigma\setminus\Sigma^0\cup\Sigma_0|$ define recursively
\begin{align} \label{sigmasetvalue}\Sigma^k \triangleq \left\{i\in \Sigma:\ell_i=\max\left\{\ell_{\alpha}: \alpha \in \Sigma\setminus \Sigma_0\cup\left(\bigcup_{j=1}^k\Sigma^{j-1}\right)\right\} \right\},\hst  k\in\{1,2,\hdots, r\},\end{align}
\noi till all the elements of $\Sigma$ are exhausted, and define the corresponding maximum value of $\ell$ on the sequence on these sets by
 \bes \ell\left(\Sigma^k\right)\triangleq\max_{i\in\Sigma\setminus\Sigma_0\cup\left(\bigcup_{j=1}^k\Sigma^{j-1}\right)}\ell_i,\hst k\in\{1,2,\hdots, r\},\ees
\noi where $r$ is the number of $\Sigma^k$ sets which is at most $|\Sigma\setminus\Sigma^0\cup\Sigma_0|$. Clearly, $\ell\left(\Sigma^1\right)=\max_{i\in\Sigma\setminus\Sigma^0\cup\Sigma_0}\ell_i$ and so on. Note the analogy between (\ref{sigmasetvalue}) and (\ref{sigmasets1}) for Problem \ref{problem1}. The main theorem which characterizes the extremum solution of Problem \ref{problem2} is given below.

\begin{theorem}\label{thmproblm3.2}
\noi The solution of the finite alphabet version of Problem \ref{problem2} is given by
\begin{align}\label{dualmp} R^-(D)=\sum_{i\in\Sigma}|\nu_i^*-\mu_i|, \end{align}

\noi where the value of $R^-(D)$ is calculated as follows.
\bi
\item [(1)] If
\begin{align*}\displaystyle \ell_{\min}\left(\sum_{j=0}^k\sum_{i\in\Sigma^j}\mu_i+\sum_{i\in\Sigma_0}\mu_i\right)+\sum_{j=k+1}^r\sum_{i\in\Sigma^j}\ell_i\mu_i\leq D\leq \ell_{\min}\left(\sum_{j=1}^k\sum_{i\in\Sigma^{j-1}}\mu_i+\sum_{i\in\Sigma_0}\mu_i\right)+\sum_{j=k}^r\sum_{i\in\Sigma^j}\ell_i\mu_i
    \end{align*} then
\bea R^-(D)=\frac{\displaystyle 2\left(D-\ell_{\min}\sum_{i\in\Sigma_0}\mu_i-\ell\left(\Sigma^k\right)\sum_{j=1}^{k}\sum_{i\in\Sigma^{j-1}}\mu_i-\sum_{j=k}^r\sum_{i\in\Sigma^j}\ell_i\mu_i\right)}{\ell_{\min}-\ell\left(\Sigma^k\right)}.\label{eq5.2}\eea
\item [(2)] If $D\geq \left(\ell_{\min}-\ell_{\max}\right)\sum_{i\in\Sigma^0}\mu_i+\sum_{i\in\Sigma}\ell_i\mu_i$ then \bea R^-(D)=\frac{\displaystyle 2\left(D-\sum_{i\in\Sigma}\ell_i\mu_i\right)}{\ell_{\min}-\ell_{\max}}.\label{eq5.1}\eea
\ei
\noi Moreover, the optimal probabilities are given by
\begin{subequations}
\begin{align}
&\nu^*(\Sigma_0) \triangleq\sum_{i\in\Sigma_0}\nu_i^*=\sum_{i\in\Sigma_0}\mu_i+\alpha,\\
&\nu^*(\Sigma^0) \triangleq\sum_{i\in\Sigma^0}\nu_i^*=\left(\sum_{i\in\Sigma^0}\mu_i-\alpha\right)^+,\\
&\nu^*(\Sigma^k) \triangleq\sum_{i\in\Sigma^k} \nu_i^*=\left(\sum_{i\in\Sigma^k} \mu_i-\left(\alpha-\sum_{j=1}^k\sum_{i\in\Sigma^{j-1}}\mu_i\right)^+\right)^+, \\
&\alpha=\min\left(\frac{R^-(D)}{2},1-\sum_{i\in\Sigma_0}\mu_i\right).
\end{align}
\end{subequations}
where $k=1,2,\hdots,r$ and $r$ is the number of $\Sigma^k$ sets which is at most $|\Sigma\setminus\Sigma^0\cup\Sigma_0|$.
\end{theorem}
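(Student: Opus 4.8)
The plan is to mirror the derivation of Theorem~\ref{thmproblm3.1}, exploiting the duality between $R^-(D)$ and the minimization analog $D^-(R)$ of (\ref{rep4}): since $R^-(D)$ is the inverse mapping of $D^-(R)$, it suffices to characterize the minimizing $\nu^*$ for a fixed total variation budget and then invert. First I would recall from Lemma~\ref{properties}(2) and (\ref{prop2}) that on $[0,D_{\max}]$ the infimum defining $R^-(D)$ is attained with the constraint active, i.e. $\sum_{i\in\Sigma}\ell_i\nu_i^*=D$; since $R^-(\cdot)$ is convex and non-increasing, it is enough to determine the optimal $\nu^*$ for each $D<E_\mu(\ell)$ and then read off $R^-(D)=\sum_i|\nu_i^*-\mu_i|$.

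Next I would fix the budget $\alpha=\sum_i|\xi_i|$ with $\xi=\nu-\mu$, $\sum_i\xi_i^+=\sum_i\xi_i^-=\alpha/2$, and minimize the average $\sum_i\ell_i\nu_i=E_\mu(\ell)+\sum_i\ell_i\xi_i^+-\sum_i\ell_i\xi_i^-$. This is exactly the reverse of the bounds in Section~\ref{subsec.Equiv.Extr.Prob1}: the average is driven down fastest by placing the entire negative variation $\xi^-$ (mass removed) on the largest-$\ell$ states and the entire positive variation $\xi^+$ (mass added) on the smallest-$\ell$ states. Hence the optimal construction adds $\alpha/2$ to $\Sigma_0$ and removes mass from $\Sigma^0$ first; once $\Sigma^0$ is exhausted (when $\alpha/2$ exceeds $\mu(\Sigma^0)$) it removes from $\Sigma^1$, and so on through the nested sets (\ref{sigmasetvalue}). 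This is the verbatim analog of Lemma~\ref{lemmabound}, Proposition~\ref{subcase1}, Lemma~\ref{subcase2} and Corollary~\ref{subcase3} with the roles of $\ell_{\max}/\Sigma^0$ and $\ell_{\min}/\Sigma_0$ interchanged, so the identical chain of upper/lower bound arguments applies and yields the stated optimal probabilities with $\alpha=\min(R^-(D)/2,\,1-\mu(\Sigma_0))$.

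With the minimizing family parametrized by $\alpha$, the minimal average becomes an explicit, strictly decreasing, piecewise-linear function of $\alpha$ whose breakpoints occur precisely when $\alpha/2$ equals a cumulative nominal mass $\sum_{j=0}^{k}\mu(\Sigma^j)$. I would then invert this relation: setting the minimal average equal to $D$ and solving the resulting linear equation for $\alpha=R^-(D)$ produces (\ref{eq5.1}) on the first piece, where $\Sigma^0$ alone absorbs the removed mass (valid for $D\geq(\ell_{\min}-\ell_{\max})\mu(\Sigma^0)+E_\mu(\ell)$, which is exactly the specialization of (\ref{f2nproblm2}) to the finite alphabet), and (\ref{eq5.2}) on the $k$-th piece, where $\Sigma^0,\ldots,\Sigma^{k-1}$ are fully drained and $\Sigma^k$ is partially drained. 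The interval conditions in part~(1) are the images of the breakpoints under this linear map.

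The main obstacle I anticipate is bookkeeping rather than conceptual: correctly locating the breakpoints, matching the running index $k$ in (\ref{eq5.2}) to the cumulative-mass intervals, and verifying the nonnegativity and cap conditions (the $(\cdot)^+$ truncations) so that each $\nu_i^*\in[0,1]$ and $\nu^*$ remains a probability vector at every stage. Care is also needed to confirm strict monotonicity on each linear piece, which guarantees that the inverse $R^-(D)$ is single-valued and that the two-case formula is continuous across the breakpoints.
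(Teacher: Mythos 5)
Your proposal is correct and follows essentially the same route as the paper's Appendix proof: decompose $\xi=\nu-\mu$ into $\xi^{+},\xi^{-}$ with $\sum_i\xi_i^{+}=\sum_i\xi_i^{-}=\alpha/2$, place all added mass on $\Sigma_0$ and drain $\Sigma^0,\Sigma^1,\ldots,\Sigma^k$ successively, then activate the average constraint and solve the resulting linear relation for $\alpha=R^-(D)$ at the boundary (the paper's Lemma~\ref{alt.dual.min.lemma}, Proposition~\ref{propminprblm}, Lemma~\ref{alt.dual.min.subcase1} and Corollary~\ref{alt.dual.min.subcase3} are exactly your bound-plus-inversion argument, with the interval conditions in part~(1) arising as the images of the cumulative-mass breakpoints). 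The only cosmetic difference is that you phrase the last step as inverting the piecewise-linear map $\alpha\mapsto D^-(\alpha)$, whereas the paper writes $D\geq f(\alpha)$ directly and selects the boundary solution; these are the same computation.
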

\begin{proof}
\noi For the derivation of the Theorem see Appendix \ref{app1}.\end{proof}

\subsection{Solutions of Related Extremum Problems}
\label{subsec.rel.extr.prblm}
In Section \ref{subsec.Rel.Extr.Prob} we discuss related extremum problems, whose solution can be obtained from those of Problem \ref{problem1} and Problem \ref{problem2}. In this Section we give the solution of the finite alphabet version of the related extremum problems described by (\ref{primal1}) and (\ref{rep4}).

Consider the finite alphabet version of (\ref{primal1}), that is
\vspace{-0.1cm}
 \bea \label{dual11}R^+(D)\tri\sup_{\nu\in {\cal M}_1(\Sigma):\sum_{i\in\Sigma}\ell_i\nu_i\leq D}||\nu-\mu||_{TV}.\eea

\noi The solution of (\ref{dual11}) is obtained from the solution of Problem \ref{problem1}, by finding the inverse mapping or by following a similar procedure to the one utilized to derive Theorem \ref{thmproblm3.2}. 

\vspace{-0.1cm}
\begin{theorem}\label{thmdualproblm3.1}
\noi The solution of the finite alphabet version of (\ref{dual11}) is given by
\vspace{-0.2cm}
\begin{align} R^+(D)=\sum_{i\in\Sigma}|\nu_i^*-\mu_i|, \label{dualmp11}\end{align}
\noi where the value of $R^+(D)$ is calculated as follows.
\bi
\item [(1)]If
\vspace{-0.3cm}
\begin{align*}\displaystyle\ell_{\max}\left(\sum_{j=1}^k\sum_{i\in\Sigma_{j-1}}\mu_i+\sum_{i\in\Sigma^0}\mu_i\right)+\sum_{j=k}^r\sum_{i\in\Sigma_j}\ell_i\mu_i\leq D\leq\ell_{\max}\left(\sum_{j=0}^k\sum_{i\in\Sigma_{j}}\mu_i+\sum_{i\in\Sigma^0}\mu_i\right)+\sum_{j=k+1}^r\sum_{i\in\Sigma_j}\ell_i\mu_i\end{align*} then
\vspace{-0.3cm}
\bea R^+(D)=\frac{\displaystyle 2\left(D-\ell_{\max}\sum_{i\in\Sigma^0}\mu_i-\ell\left(\Sigma_k\right)\sum_{j=1}^{k}\sum_{i\in\Sigma_{j-1}}\mu_i-\sum_{j=k}^r\sum_{i\in\Sigma_j}\ell_i\mu_i\right)}{\ell_{\max}-\ell\left(\Sigma_k\right)}.\eea
\item [(2)] If $\displaystyle D\leq \left(\ell_{\max}-\ell_{\min}\right)\sum_{i\in\Sigma_0}\mu_i+\sum_{i\in\Sigma}\ell_i\mu_i$ then
\vspace{-0.3cm}
    \bea R^+(D)=\frac{\displaystyle 2\left(D-\sum_{i\in\Sigma}\ell_i\mu_i\right)}{\ell_{\max}-\ell_{\min}}.\eea
\ei
\noi Moreover, the optimal probabilities are given by
\begin{subequations}
\begin{align}
& \nu^*(\Sigma^0)\triangleq\sum_{i\in\Sigma^0}\nu_i^*=\sum_{i\in\Sigma^0}\mu_i+\alpha,\label{all3a}\\
& \nu^*(\Sigma_0)\triangleq\sum_{i\in\Sigma_0}\nu_i^*=\left(\sum_{i\in\Sigma_0}\mu_i-\alpha\right)^+,\label{all3b}\\
& \nu^*(\Sigma_k)\triangleq\sum_{i\in\Sigma_k} \nu_i^*=\left(\sum_{i\in\Sigma_k} \mu_i-\left(\alpha-\sum_{j=1}^k\sum_{i\in\Sigma_{j-1}}\mu_i\right)^+\right)^+, \label{all3c}\\
&\alpha=\min\left(\frac{R^+(D)}{2},1-\sum_{i\in\Sigma^0}\mu_i\right).\label{all3d}
\end{align}
\end{subequations}
\noi where, $k=1,2,\hdots,r$ and $r$ is the number of $\Sigma_k$ sets which is at most $|\Sigma\setminus\Sigma^0\cup\Sigma_0|$.
\end{theorem}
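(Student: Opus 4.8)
The plan is to obtain $R^+(D)$ as the inverse mapping of $D^+(R)$, which is legitimate because, by Lemma~\ref{properties}(1) together with the closed form (\ref{mp}) and Corollary~\ref{cor.of.lemma}, $D^+(R)$ is continuous, non-decreasing, and strictly increasing on $[0,R_{\max}]$ with $R_{\max}=2(1-\mu(\Sigma^0))$, and constant thereafter; hence it admits a well-defined inverse on its range. First I would write the pay-off (\ref{mp})--(\ref{all3}) of Theorem~\ref{thmproblm3.1} as a function of the single transport parameter $\alpha=\min(R/2,\,1-\mu(\Sigma^0))$, with $R=2\alpha$ on the increasing part. On the $\alpha$-interval where the removed mass has exhausted $\Sigma_0,\dots,\Sigma_{k-1}$ and is currently drawn from $\Sigma_k$ — that is, $\sum_{j=1}^{k}\sum_{i\in\Sigma_{j-1}}\mu_i\le\alpha\le\sum_{j=0}^{k}\sum_{i\in\Sigma_{j}}\mu_i$ — the measure $\nu^*$ of (\ref{all3}) places $\mu(\Sigma^0)+\alpha$ on $\Sigma^0$, zero on $\Sigma_0,\dots,\Sigma_{k-1}$, and $\mu(\Sigma_k)-(\alpha-\sum_{j=1}^{k}\sum_{i\in\Sigma_{j-1}}\mu_i)$ on $\Sigma_k$, so that $D^+$ is affine in $\alpha$ with slope $\ell_{\max}-\ell(\Sigma_k)>0$. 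Inverting each affine branch gives the corresponding branch of $R^+(D)$; the first branch (where only $\Sigma_0$ supplies mass, slope $\ell_{\max}-\ell_{\min}$) produces case~(2), while the generic branch produces case~(1).

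An equivalent and more self-contained route, which is the one I would actually carry out to keep the algebra transparent, is to rerun the constructive scheme of Theorem~\ref{thmproblm3.2} (detailed in Appendix~\ref{app1}) with the direction of mass transport reversed, so that the roles played there by $\Sigma^0$ and $\Sigma_0$ are interchanged. Writing $\xi_i=\nu_i-\mu_i=\xi_i^+-\xi_i^-$ with $\sum_i\xi_i^+=\sum_i\xi_i^-=\alpha/2$ and $||\nu-\mu||_{TV}=\alpha$, the active linear constraint reads $\sum_i\ell_i\xi_i^+-\sum_i\ell_i\xi_i^-=D-\sum_i\ell_i\mu_i$ at optimality. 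The bounds of Lemma~\ref{lemmabound}, Proposition~\ref{subcase1}, Lemma~\ref{subcase2} and Corollary~\ref{subcase3}, established for Problem~\ref{problem1}, apply verbatim: they force the positive variation $\xi^+$ onto $\Sigma^0$ and the negative variation $\xi^-$ onto $\Sigma_0,\Sigma_1,\dots$ in increasing order of $\ell$, which is exactly the transport pattern recorded in the optimal probabilities of the theorem. Thus the feasible configurations are parametrised by $\alpha$ alone, and the binding constraint $\sum_i\ell_i\nu_i^*=D$ determines $\alpha$, hence $R^+(D)=2\alpha$, in closed form.

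To finish, I would substitute $\nu^*$ into the binding constraint on each regime and solve for $\alpha$. On the generic branch this yields $(\ell_{\max}-\ell(\Sigma_k))\alpha=D-\ell_{\max}\mu(\Sigma^0)-\ell(\Sigma_k)\sum_{j=1}^{k}\sum_{i\in\Sigma_{j-1}}\mu_i-\sum_{j=k}^{r}\sum_{i\in\Sigma_j}\ell_i\mu_i$, and $R^+=2\alpha$ is precisely the case~(1) expression (the analogue for $R^-(D)$ being (\ref{eq5.2})); the two-sided bound on $D$ in case~(1) is read off from the admissibility requirement $0\le\alpha-\sum_{j=1}^{k}\sum_{i\in\Sigma_{j-1}}\mu_i\le\mu(\Sigma_k)$. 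The first branch, valid for $0\le\alpha\le\mu(\Sigma_0)$, gives case~(2) (the analogue of (\ref{eq5.1})), and one checks the endpoints: $D=\sum_i\ell_i\mu_i$ returns $R^+=0$, while $\alpha=1-\mu(\Sigma^0)$ returns the saturation value $R_{\max}$ of Corollary~\ref{cor.of.lemma}. The main obstacle is the piecewise bookkeeping: correctly matching each depletion breakpoint in $\alpha$ to its threshold in $D$, and verifying that the nested $(\cdot)^+$ truncations in (\ref{all3}) encode the hand-off from $\Sigma_k$ to $\Sigma_{k+1}$ so that consecutive branches patch together continuously. This is the same mechanism already handled for the companion problem in Appendix~\ref{app1}, now applied with the transport direction reversed relative to $R^-(D)$.
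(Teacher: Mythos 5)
Your proposal takes essentially the same route as the paper: the paper offers no separate proof of this theorem, stating only that it follows from Theorem~\ref{thmproblm3.1} by inverting $D^+(R)$ or by repeating the construction of Theorem~\ref{thmproblm3.2} (Appendix~\ref{app1}) with the direction of mass transport reversed, and these are precisely the two routes you carry out, with your branch-by-branch inversion correctly reproducing the stated thresholds on $D$ and the formulas in cases (1) and (2). The only blemish is a harmless notational wobble: in your second paragraph $\alpha$ denotes $||\nu-\mu||_{TV}$ while the algebra concluding $R^+=2\alpha$ requires $\alpha=||\nu-\mu||_{TV}/2$ --- the same factor-of-two ambiguity the paper itself carries between Lemma~\ref{lemmabound} and the statement of Theorem~\ref{thmproblm3.1}.
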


\noi Consider the finite alphabet version of (\ref{rep4}), that is
\bea D^-(R)\tri\inf_{\nu\in{\cal M}_1(\Sigma):||\nu-\mu||_{TV}\leq R}\sum_{i\in\Sigma}\ell_i\nu_i.\label{dual12}\eea

\noi The solution of (\ref{dual12}) is obtained from that of Problem \ref{problem1}, but with a reverse computation on the partition of $\Sigma$ and the mass of the extremum measure on the partition moving in the opposite direction. Below, we give the main theorem.

\begin{theorem}\label{thmdualproblm3.2}
\noi The solution of the finite alphabet version of (\ref{dual12}) is given by
\begin{align} D^-(R)=\ell_{\max}\nu^*(\Sigma^0)+\ell_{\min}\nu^*(\Sigma_0)+\sum_{k=1}^r\ell(\Sigma^k)\nu^*(\Sigma^k).\label{dualmp30}\end{align}
\noi Moreover, the optimal probabilities are given by
\begin{subequations}
\begin{align}
& \nu^*(\Sigma_0)\triangleq\sum_{i\in\Sigma_0}\nu_i^*=\sum_{i\in\Sigma_0}\mu_i+\alpha,\\
& \nu^*(\Sigma^0)\triangleq\sum_{i\in\Sigma^0}\nu_i^*=\left(\sum_{i\in\Sigma^0}\mu_i-\alpha\right)^+,\\
& \nu^*(\Sigma^k)\triangleq\sum_{i\in\Sigma^k} \nu_i^*=\left(\sum_{i\in\Sigma^k} \mu_i-\left(\alpha-\sum_{j=1}^k\sum_{i\in\Sigma^{j-1}}\mu_i\right)^+\right)^+, \\
&\alpha=\min\left(\frac{R}{2},1-\sum_{i\in\Sigma_0}\mu_i\right),
\end{align}
\end{subequations}

\noi where, $k=1,2,\hdots,r$ and $r$ is the number of $\Sigma^k$ sets which is at most $|\Sigma\setminus\Sigma^0\cup\Sigma_0|$.
\end{theorem}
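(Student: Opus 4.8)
The plan is to reduce the minimization in (\ref{dual12}) to the maximization already solved in Theorem \ref{thmproblm3.1} by a sign-reversing change of the pay-off vector. First I would fix any constant $C\geq \ell_{\max}$ and set $\tilde{\ell}_i\triangleq C-\ell_i$, so that $\tilde{\ell}\in{\mathbb R}_+^{|\Sigma|}$ (the non-negativity required by Theorem \ref{thmproblm3.1} is exactly why $C\geq\ell_{\max}$ is imposed). For every probability vector $\nu$ we have $\sum_{i\in\Sigma}\tilde{\ell}_i\nu_i=C-\sum_{i\in\Sigma}\ell_i\nu_i$, so minimizing $\sum_{i\in\Sigma}\ell_i\nu_i$ over ${\mathbb B}_R(\mu)$ is equivalent to maximizing $\sum_{i\in\Sigma}\tilde{\ell}_i\nu_i$ over the same feasible set, and the two problems share the same optimizer $\nu^*$. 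Hence $D^-(R)=C-D^+_{\tilde{\ell}}(R)$, where $D^+_{\tilde{\ell}}(R)$ is the value furnished by Theorem \ref{thmproblm3.1} applied to $\tilde{\ell}$.

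Next I would translate the data of the $\tilde{\ell}$-maximization back into the data of the original $\ell$. Since $\tilde{\ell}_i=C-\ell_i$ reverses the order of the entries, the maximizing support set of $\tilde{\ell}$ is $\Sigma_0$ and its minimizing support set is $\Sigma^0$; more generally, the descending chain $\Sigma^0,\Sigma^1,\ldots,\Sigma^r$ defined by the max-recursion (\ref{sigmasetvalue}) for $\ell$ coincides with the ascending chain produced by the min-recursion (\ref{sigmasets1}) applied to $\tilde{\ell}$. Substituting $\tilde{\Sigma}^0=\Sigma_0$, $\tilde{\Sigma}_0=\Sigma^0$, $\tilde{\Sigma}_k=\Sigma^k$ into the optimal-mass formulas (\ref{all3a})--(\ref{all3d}) of Theorem \ref{thmproblm3.1} reproduces verbatim the stated expressions for $\nu^*(\Sigma_0)$, $\nu^*(\Sigma^0)$, $\nu^*(\Sigma^k)$, together with $\alpha=\min\!\big(R/2,\,1-\sum_{i\in\Sigma_0}\mu_i\big)$; the quantity $\alpha$, being the transported mass $\tfrac12\|\nu-\mu\|_{TV}$, is invariant under the change of pay-off, so it carries over unchanged.

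It then remains to recover the closed form (\ref{dualmp30}) for the value. Writing the value formula (\ref{mp}) for $\tilde{\ell}$ gives $D^+_{\tilde{\ell}}(R)=\tilde{\ell}_{\max}\nu^*(\Sigma_0)+\tilde{\ell}_{\min}\nu^*(\Sigma^0)+\sum_{k=1}^r\tilde{\ell}(\Sigma^k)\nu^*(\Sigma^k)$, and I would substitute $\tilde{\ell}_{\max}=C-\ell_{\min}$, $\tilde{\ell}_{\min}=C-\ell_{\max}$, $\tilde{\ell}(\Sigma^k)=C-\ell(\Sigma^k)$. The key simplification is that $\nu^*$ is a probability vector: the masses $\nu^*(\Sigma^0),\nu^*(\Sigma_0),\nu^*(\Sigma^k)$ together with those on the exhausted sets sum to $1$, so every occurrence of $C$ collects into a single $C\cdot\nu^*(\Sigma)=C$ which cancels the leading $C$ in $D^-(R)=C-D^+_{\tilde{\ell}}(R)$, leaving precisely $\ell_{\max}\nu^*(\Sigma^0)+\ell_{\min}\nu^*(\Sigma_0)+\sum_{k=1}^r\ell(\Sigma^k)\nu^*(\Sigma^k)$. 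In particular the final expression is independent of the auxiliary constant $C$, as it must be.

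The main obstacle I anticipate is purely bookkeeping: verifying that the nested positive-part recursion in (\ref{all3c}) is genuinely invariant under the order reversal, i.e. that the sets peeled off first in the $\tilde{\ell}$-maximization (smallest $\tilde{\ell}$, hence largest $\ell$) are exactly $\Sigma^0,\Sigma^1,\ldots$ in the stated order, so that the cumulative sums $\sum_{j=1}^k\sum_{i\in\Sigma^{j-1}}\mu_i$ in the mass formulas match those generated by the recursion. As an alternative — and the one the phrase \emph{reverse computation $\ldots$ moving in the opposite direction} literally suggests — one may instead re-derive the result directly, mirroring Lemma \ref{lemmabound}, Proposition \ref{subcase1}, Lemma \ref{subcase2} and Corollary \ref{subcase3} with all inequalities reversed: one lower-bounds $\sum_{i\in\Sigma}\ell_i\xi_i^+$ by $\ell_{\min}\sum_{i\in\Sigma}\xi_i^+$ and upper-bounds $\sum_{i\in\Sigma}\ell_i\xi_i^-$ by $\ell_{\max}\sum_{i\in\Sigma}\xi_i^-$, adding mass to $\Sigma_0$ and removing it first from $\Sigma^0$, spilling into $\Sigma^1,\Sigma^2,\ldots$ whenever $\sum_{i\in\Sigma^0}\mu_i<\alpha$, which yields the same optimal measure and the same value.
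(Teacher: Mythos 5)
Your proposal is correct, and your primary route is genuinely different from what the paper does. The paper offers no written proof of Theorem \ref{thmdualproblm3.2} at all: it simply asserts that the solution is obtained from that of Problem \ref{problem1} ``with a reverse computation on the partition of $\Sigma$ and the mass of the extremum measure moving in the opposite direction'' --- i.e.\ exactly the mirrored re-derivation (reversed versions of Lemma \ref{lemmabound}, Proposition \ref{subcase1}, Lemma \ref{subcase2}, Corollary \ref{subcase3}) that you list only as your fallback alternative. Your main argument, the affine substitution $\tilde{\ell}_i = C - \ell_i$ with $C \geq \ell_{\max}$, instead derives the theorem as a literal corollary of Theorem \ref{thmproblm3.1}: the feasible set ${\mathbb B}_R(\mu)$ does not depend on $\ell$, the order reversal sends the min-recursion (\ref{sigmasets1}) for $\tilde{\ell}$ onto the max-recursion (\ref{sigmasetvalue}) for $\ell$ (so $\tilde{\Sigma}^0 = \Sigma_0$, $\tilde{\Sigma}_0 = \Sigma^0$, $\tilde{\Sigma}_k = \Sigma^k$, and the cumulative sums $\sum_{j=1}^k \mu(\tilde{\Sigma}_{j-1}) = \sum_{j=1}^k\sum_{i \in \Sigma^{j-1}}\mu_i$ match term by term), and the $C$-dependence cancels because $\{\Sigma^0, \Sigma_0, \Sigma^1, \ldots, \Sigma^r\}$ partitions $\Sigma$ so the masses sum to one. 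This buys you a complete proof with no new inequality work and makes the invariance of $\alpha = \tfrac{1}{2}\|\nu^*-\mu\|_{TV}$ transparent; the paper's (implicit) mirrored derivation has the advantage of being self-contained and of exhibiting the water-filling mechanics directly, but as written in the paper it is only a remark, whereas your reduction actually closes the argument.
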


\begin{remark}
\noi The statements of Theorems \ref{thmproblm3.1}, \ref{thmproblm3.2}, \ref{thmdualproblm3.1}, \ref{thmdualproblm3.2} are also valid for the countable alphabet case, because their derivations are not restricted to $\Sigma$ being finite alphabet. It also holds for any $\ell\in BC^+(\Sigma)$ as seen in Section \ref{char.extr.meas.}. The extensions of Theorems \ref{thmproblm3.1}-\ref{thmdualproblm3.2} to $\ell\in L^{\infty,+}(\Sigma, {\cal B}(\Sigma), \nu)$ can be shown as well; for example, $D^+(R)$ is given by
\begin{align} D^+(R)=\ell_{\max}\nu^*(\Sigma^0)+\ell_{\min}\nu^*(\Sigma_0)+\sum_{k=1}^r\ell(\Sigma_k)\nu^*(\Sigma_k),\label{countmp}\end{align}
\noi where the optimal probabilities are given by
\begin{subequations}
\label{countall3}
\begin{align}
& \nu^*(\Sigma^0)=\mu(\Sigma^0)+\alpha,\label{countall3a}\\
& \nu^*(\Sigma_0)=\left(\mu(\Sigma_0)-\alpha\right)^+,\label{countall3b}\\
& \nu^*(\Sigma_k)=\left(\mu(\Sigma_k)-\left(\alpha-\sum_{j=1}^k\mu(\Sigma_{j-1})\right)^+\right)^+, \label{countall3c}\\
&\alpha=\min\left(\frac{R}{2},1-\mu(\Sigma^0)\right),\label{countall3d}
\end{align}
\end{subequations}
\noi $k$ is at most countable. We outline the main steps of the derivation. For any $n\in\mathbb{N}$, $\ell \in BC^+(\Sigma)$ define $\ell_n\triangleq\ell\bigwedge n$ (i.e., the minimum between $\ell$ and $n$), then $\ell_n\in BC^+(\Sigma)$, and for any $\nu\in {\mathbb B}_R({ \mu})$ we have
\bes
 \sup_{\nu\in {\mathbb B}_R({ \mu})} \int_{\Sigma}\ell_n(x)d\nu(x)=\frac{R}{2} \Big( \sup_{x \in \Sigma}\ell_n(x) - \inf_{x \in \Sigma} \ell_n(x)\Big) +\int_{\Sigma}\ell_n(x)\nu(dx).
\ees
For any $\nu\in {\mathbb B}_R({ \mu})$, we obtain the inequality
\begin{align*}
\int_{\Sigma}\ell(x)d\nu(x)&=\sup_{n\in \mathbb{N}}\int_{\Sigma}\ell_n(x)\nu(dx)\\
  &\leq \sup_{n\in \mathbb{N}} \sup_{\nu\in {\mathbb B}_R({ \mu})}\int_{\Sigma}\ell_n(x)\nu(dx)\\
  &=\sup_{n\in\mathbb{N}}\left\{\frac{R}{2}\left(\sup_{x\in\Sigma}\ell_n(x)-\inf_{x\in\Sigma}\ell_n(x)\right)+\int_{\Sigma}\ell_n(x)d\mu_n(x)\right\} \\
  &\leq \sup_{n\in\mathbb{N}}\left\{\frac{R}{2}\left(\sup_{x\in\Sigma}\ell_n(x)-\inf_{x\in\Sigma}\ell_n(x)\right)\right\}+\int_{\Sigma}\ell(x)d\mu(x).
\end{align*}
\noi Hence,
\begin{align*}
  \sup_{\nu\in{\mathbb B}_R({ \mu})}\int_{\Sigma}\ell(x)d\nu(x)\leq \frac{R}{2}\sup_{n\in\mathbb{N}}\left\{\sup_{x\in\Sigma}\ell_n(x)-\inf_{x\in\Sigma}\ell_n(x)\right\}+\int_{\Sigma}\ell(x)d\mu(x).
\end{align*}

\noi Similarly, we can show that
\begin{align*}
  \sup_{\nu\in{\mathbb B}_R({ \mu})}\int_{\Sigma}\ell(x)d\nu(x)\geq \frac{R}{2}\sup_{n\in \mathbb{N}}\left\{\sup_{x\in\Sigma}\ell_n(x)-\inf_{x\in\Sigma}\ell_n(x)\right\}+\int_{\Sigma}\ell(x)d\mu(x).
\end{align*}

\noi Hence,
\begin{align*}
  \sup_{\nu\in{\mathbb B}_R({ \mu})}\int_{\Sigma}\ell(x)d\nu(x)=\frac{R}{2}\sup_{n\in\mathbb{N}}\left\{\sup_{x\in\Sigma}\ell_n(x)-\inf_{x\in\Sigma}\ell_n(x)\right\}+\int_{\Sigma}\ell(x)d\mu(x).
\end{align*}

\noi Utilizing the fact that $\sup_{n\in \mathbb{N}} \sup_{x \in \Sigma} \ell_n={\sup_n} ||\ell_n||_{\infty,\nu}$ ( $||\ell||_{\infty,\nu}=\inf_{\Delta\in N}\sup_{x\in\Delta^c}\ell(x)$, $N\triangleq\{A\in {\cal B}(\Sigma):\nu(A)=0\}$, and similarly for the infimum) we obtain the results.

\end{remark}

%
%
%
%
\section{Relation of Total Variational Distance to Other Metrics}
\label{sec.rel.metr}

\noi In this section, we discuss relations of the total variational distance to other distance metrics. We also refer to some applications with distance metrics that can be substituted by the total variational distance metric.

\noi \emph{$L_1$ Distance Uncertainty.} Let $\sigma\in{\cal M}_1(\Sigma)$ be a fixed measure (as well as $\mu\in{\cal M}_1(\Sigma)$). Define the Radon-Nykodym derivatives $\psi\tri\frac{d\mu}{d\sigma}$, $\varphi\tri\frac{d\nu}{d\sigma}$ (densities with respect to a fixed $\sigma\in{\cal M}_1(\Sigma)$). Then,
\vspace{-0.2cm}
\bes
||\nu-\mu||_{TV}=\int|\varphi(x)-\psi(x)|\sigma(dx) \; .
\ees
\noi Consider a subset of ${\mathbb B }_R({ \mu})$ defined by ${\mathbb B }_{R,\sigma}(\mu)\tri\{\nu\in{\mathbb B }_R({ \mu}):\nu<<\sigma,\mu<<\sigma\}\subseteq{\mathbb B }_R({ \mu})$. Then,
\vspace{-0.2cm}
\bes
{\mathbb B }_{R,\sigma}(\mu)=\left\{\varphi\in L_1(\sigma),\varphi\geq 0, \sigma-a.s.:\int_\Sigma|\varphi(x)-\psi(x)|\sigma(dx)\leq R\right\} \; .
\ees

\noi Thus, under the absolute continuity of measures the total variational distance reduces to $L_1$ distance. Robustness via $L_1$ distance uncertainty on the space of spectral densities is investigated in the context of Wiener-Kolmogorov theory in an estimation and decision framework in \cite{Poor80,Vastola84}. The extremum problem described under (a) can be applied to abstract formulations of minimax control and estimation, when the nominal system and uncertainty set are described by spectral measures with respect to variational distance.

\noi {\it Relative Entropy Uncertainty Model.} \cite{Dupuis97} The relative entropy of $\nu\in{\cal M}_1(\Sigma)$ with respect to $\mu\in{\cal M}_1(\Sigma)$ is a mapping $H(\cdot|\cdot):{\cal M}_1(\Sigma)\times{\cal M}_1(\Sigma)\longmapsto[0,\infty]$ defined by
\bes
  H(\nu|\mu) \tri \left\{
  \begin{array}{l l}
    \int_\Sigma\log(\frac{d\nu}{d\mu})d\nu, & \quad \mbox{if}\hso \nu<<\mu\\
    +\infty,  & \quad\mbox{otherwise}.\\
  \end{array} \right.
\ees

\noi It is well known that $H(\nu|\mu)\geq0,\forall\nu,\mu\in {\cal M}_1(\Sigma)$, while $H(\nu|\mu)=0\Leftrightarrow\nu=\mu$. Total variational distance is bounded above by relative entropy via Pinsker's inequality giving
\bea \label{Pinskers}||\nu-\mu||_{TV}\leq \sqrt{2H(\nu|\mu)},\hso \nu,\mu\in{\cal M}_1(\Sigma).\eea

\noi Given a known or nominal probability measure $\mu\in{\cal M}_1(\Sigma)$ the uncertainty set based on relative entropy is defined by
$A_{{\tilde R}}(\mu)\tri\left\{\nu\in{\cal M}_1(\Sigma):H(\nu|\mu)\leq{\tilde R}\right\}$, where ${\tilde R}\in[0,\infty)$. Clearly, the uncertainty set determined by the total variation distance $d_{TV}$, is larger than that determined by the relative entropy. In other words, for every $r>0$, in view of Pinsker's inequality (\ref{Pinskers}):
\begin{align*} \left\{\nu\in{\cal M}_1(\Sigma),\nu<<\mu:H(\nu|\mu)\leq\frac{r^2}{2}\right\}\subseteq {\mathbb B }_R({ \mu})\equiv
\bigg\{\nu\in {\cal M}_1(\Sigma):||\nu-\mu||_{TV}\leq r\bigg\}.\end{align*}
\noi Hence, even for those measures which satisfy $\nu<<\mu$, the uncertainty set described by relative entropy is a subset of the much larger total variation distance uncertainty set. Moreover, by Pinsker's inequality, distance in total variation of probability measures is a lower bound on their relative entropy or Kullback-Leibler distance, and hence convergence in relative entropy of probability measures implies their convergence in total variation distance.

Over the last few years, relative entropy uncertainty model has received particular attention due to various properties (convexity, compact level sets), its simplicity and its connection to risk sensitive pay-off, minimax games, and large deviations \cite{pra96,Ugrinovskii,Petersen,nc2007,Charalambous07}. Recently, an uncertainty model along the spirit of Radon-Nikodym derivative is employed in \cite{Oksendal11} for portfolio optimization under uncertainty. Unfortunately, relative entropy uncertainty modeling has two disadvantages. 1) It does not define a true metric on the space of measures; 2) relative entropy between two measures is not defined if the measures are not absolutely continuous. The latter rules out the possibility of measures $\nu\in{\cal M}_1(\Sigma)$ and $\mu\in{\cal M}_1(\Sigma)$, $\tilde{\Sigma}\subset\Sigma$ to be defined on different spaces\footnote{This corresponds to the case in which the nominal system is a simplified version of the true system and is defined on a lower dimension space.}. It is one of the main disadvantages in employing relative entropy in the context of uncertainty modelling for stochastic controlled diffusions (or SDE's) \cite{pinsker64}. Specifically, by invoking a change of measure it can be shown that relative entropy modelling allows uncertainty in the drift coefficient of stochastic controlled diffusions, but not in the diffusion coefficient, because the latter kind of uncertainty leads to measures which are not absolutely continuous with respect to the nominal measure \cite{pra96}.

\noi {\it Kakutani-Hellinger Distance.} \cite{gibbs} Another measure of distance of two probability measures which relates to their distance in variation is the Kakutani-Hellinger distance. Consider as before, $\nu\in {\cal M}_1(\Sigma)$, $\mu\in {\cal M}_1(\Sigma)$ and a fixed measure $\sigma\in {\cal M}_1(\Sigma)$ such that $\nu<<\sigma$, $\mu<<\sigma$ and define $\varphi\tri\frac{d\nu}{d\sigma}$, $\psi\tri\frac{d\mu}{d\sigma}$. The Kakutani-Hellinger distance is a mapping $d_{KH}:L_1(\sigma)\times L_1(\sigma)\mapsto[0,\infty)$ defined by
\bea \label{Kakutani-Hellinger}d^2_{KH}(\nu,\mu)\tri\frac{1}{2}\int\left(\sqrt{\varphi(x)}-\sqrt{\psi(x)}\right)^2d\sigma(x).\eea

\noi Indeed, the function $d_{KH}$ given by (\ref{Kakutani-Hellinger}) is a metric on the set of probability measures. A related quantity is the Hellinger integral of measures $\nu\in{\cal M}_1(\Sigma)$ and $\mu\in{\cal M}_1(\Sigma)$ defined by \bea H(\nu,\mu)\tri\int \sqrt{\varphi(x)\psi(x)}d\sigma(x),\eea
\noi which is related to the Kakutani-Hellinger distance via $d^2_{KH}(\nu,\mu)=1-H(\nu,\mu)$. The relations between distance in variation and Kakutani-Hellinger distance (and Hellinger integral) are given by the following inequalities:
\begin{align} 2\{1-H(\nu,\mu)\}\leq&||\nu-\mu||_{TV}\leq\sqrt{8\{1-H(\nu,\mu)\}},\\
&||\nu-\mu||_{TV}\leq 2\sqrt{1-H^2(\nu,\mu)},\\
2d^2_{KH}(\nu,\mu)\leq&||\nu-\mu||_{TV}\leq\sqrt{8}d_{KH}(\nu,\mu).
\end{align}
\noi The above inequalities imply that these distances define the same topology on the space of probability measure on $(\Sigma,{\cal B}(\Sigma))$. Specifically, convergence in total variation of probability measures defined on a metric space $(\Sigma,{\cal B}(\Sigma),d)$, implies their weak convergence with respect to the Kakutani-Hellinger distance metric, \cite{gibbs}. In \cite{Ferrante08}, the Hellinger distance on the space of spectral densities is used to define a pay-off subject to constraints in the context of approximation theory.

\noi {\it Levy-Prohorov Distance.} \cite{Dupuis97} Given a metric space $(\Sigma,{\cal B}(\Sigma),d)$, and a family of probability measures ${\cal M}_1(\Sigma)$ on $(\Sigma,{\cal M}_1(\Sigma))$ it is possible to "metrize" weak convergence of probability measure, denoted by $P_n\overset{w}\rightarrow P$, where $\{P_n:n\in{\mathbb N}\}\subset{\cal M}_1(\Sigma)$, $P\in{\cal M}_1(\Sigma)$ via the so called Levy-Prohorov metric denoted by $d_{LP}(\nu,\mu)$. Thus, this metric is also a candidate for a measure of proximity between two probability measures.
\noi The Levi-Prohorov metric is related to distance in variation via the upper bound \cite{gibbs},
\bes d_{LP}(\nu,\mu)\leq\min\left\{||\nu-\mu||_{TV},1\right\}, \hso \forall \hso \nu\in{\cal M}_1(\Sigma),\mu\in{\cal M}_1(\Sigma).\ees
\noi The function defined by $L(\nu,\mu)=\max\left\{d_{LP}(\nu,\mu),d_{LP}(\mu,\nu)\right\}$, is actually a distance metric (it satisfies the properties of distance).

In view of the relations between different metrics, such as relative entropy, Levy-Prohorov metric, Kakutani-Hellinger metric, it is clear that the Problem discussed under (1)-(4) give sub-optimal solution to the same problem with distance in variation replaced by these metrics.

%
%
%
%
\section{Examples} \label{ex}

We will illustrate through simple examples how the optimal solution of the different extremum problems behaves. In particular, we present calculations through Example \ref{exA} for $D^+(R)$ and $R^+(D)$, when the sequence $\ell=\{\ell_1\hso \ell_2\hso \hdots \hso\ell_n\}\in\mathbb{R}_+^n$ consists of a number of $\ell_i$'s which are equal and calculations through Example \ref{exB} for $R^-(D)$ and $D^-(R)$ when the $\ell_i$'s are not equal. We further present calculations through Example \ref{exC} for $D^+(R)$, $R^+(D)$ and $D^-(R)$, $R^-(D)$ using a large number of $\ell_i$'s.

\subsection{Example A}
\label{exA}
\noi Let $\Sigma=\{i:i=1,2,\ldots,8 \}$ and for simplicity consider a descending sequence of lengths $\ell=\{\ell\in \mathbb{R}_+^8:\ell_1=\ell_2>\ell_3=\ell_4>\ell_5>\ell_6=\ell_7>\ell_8\}$ with corresponding nominal probability vector $\mu\in{\mathbb P}_1(\Sigma)$. Specifically, let $\ell=\left[1, 1, 0.8, 0.8, 0.6, 0.4, 0.4, 0.2\right]$, and $\mu=\left[\frac{23}{72} ,\frac{13}{72} ,\frac{10}{72} \,\frac{9}{72} ,\frac{8}{72} ,\frac{4}{72} ,\frac{3}{72},\frac{2}{72}\right]$. Note that, the sets which correspond to the maximum, minimum and all the remaining lengths are equal to $\Sigma^0=\{1,2\}, \Sigma_0=\{8\}, \Sigma_1=\{7,6\}, \Sigma_2=\{5\}, \Sigma_3=\{4,3\}$.
\noi Figures \ref{fig1}\subref{fig1.1}-\subref{fig1.2} depicts the maximum linear functional pay-off subject to total variational constraint, $D^+(R)$, and the optimal probabilities, both given by Theorem \ref{thmproblm3.1}. Figures \ref{fig1}\subref{fig1.3}-\subref{fig1.4} depicts the maximum total variational pay-off subject to linear functional constraint, $R^+(D)$, and the optimal probabilities, both given by Theorem \ref{thmdualproblm3.1}. Recall Lemma \ref{properties} case 1 and Corollary \ref{cor.of.lemma}. Figure \ref{fig1.1} shows that, $D^+(R)$ is a non-decreasing concave function of $R$ and also that is constant in $[R_{\max},2]$, where $R_{\max}=2\left(1-\mu(\Sigma^0)\right)=1$.

\begin{figure}[h!]
\centering
\subfloat[][]{
\label{fig1.1} 
\includegraphics[width=.35\linewidth]{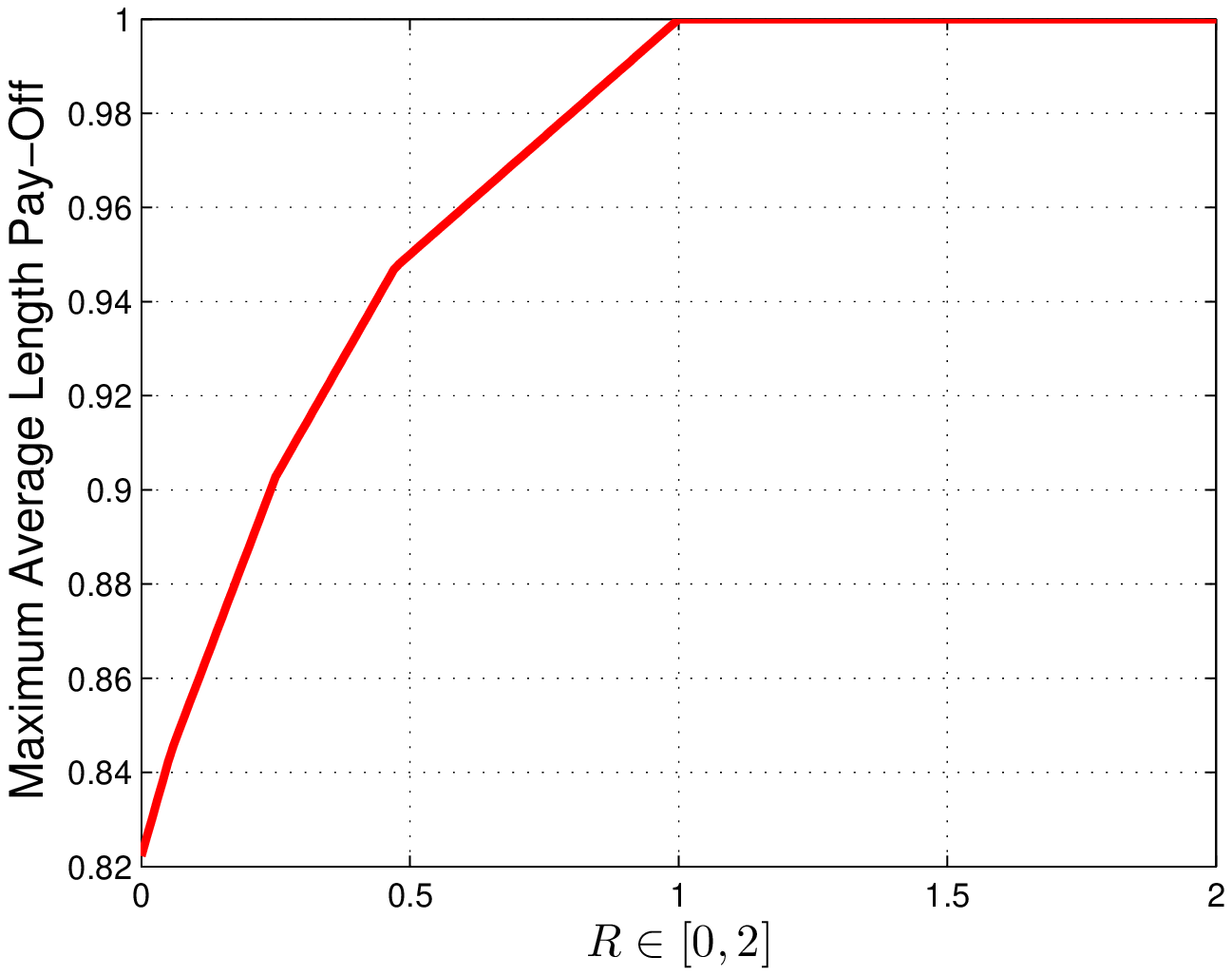}}
\subfloat[][]{
\label{fig1.2} 
\includegraphics[width=.35\linewidth]{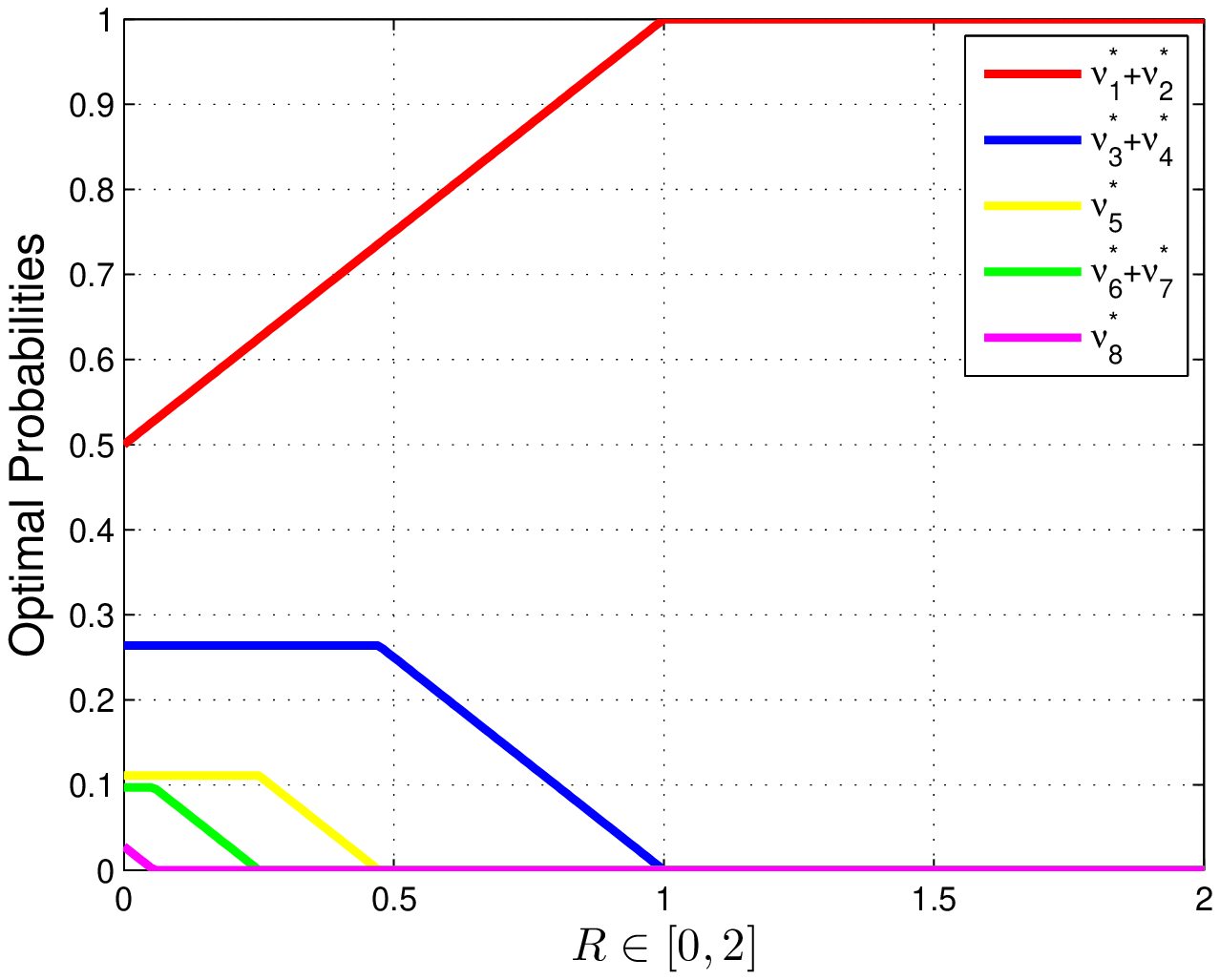}}\\
\subfloat[][]{
\label{fig1.3} 
\includegraphics[width=.35\linewidth]{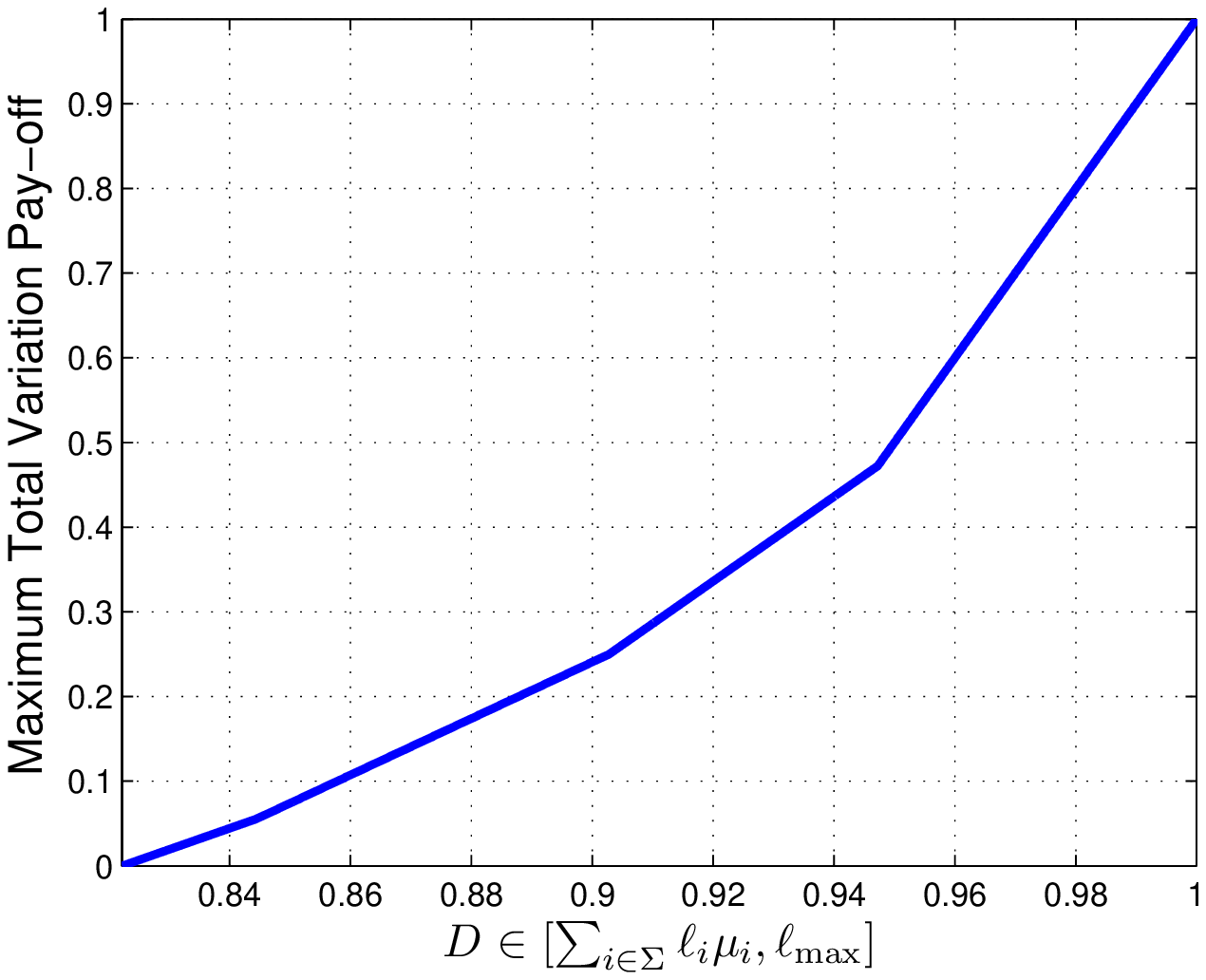}}
\subfloat[][]{
\label{fig1.4} 
\includegraphics[width=.35\linewidth]{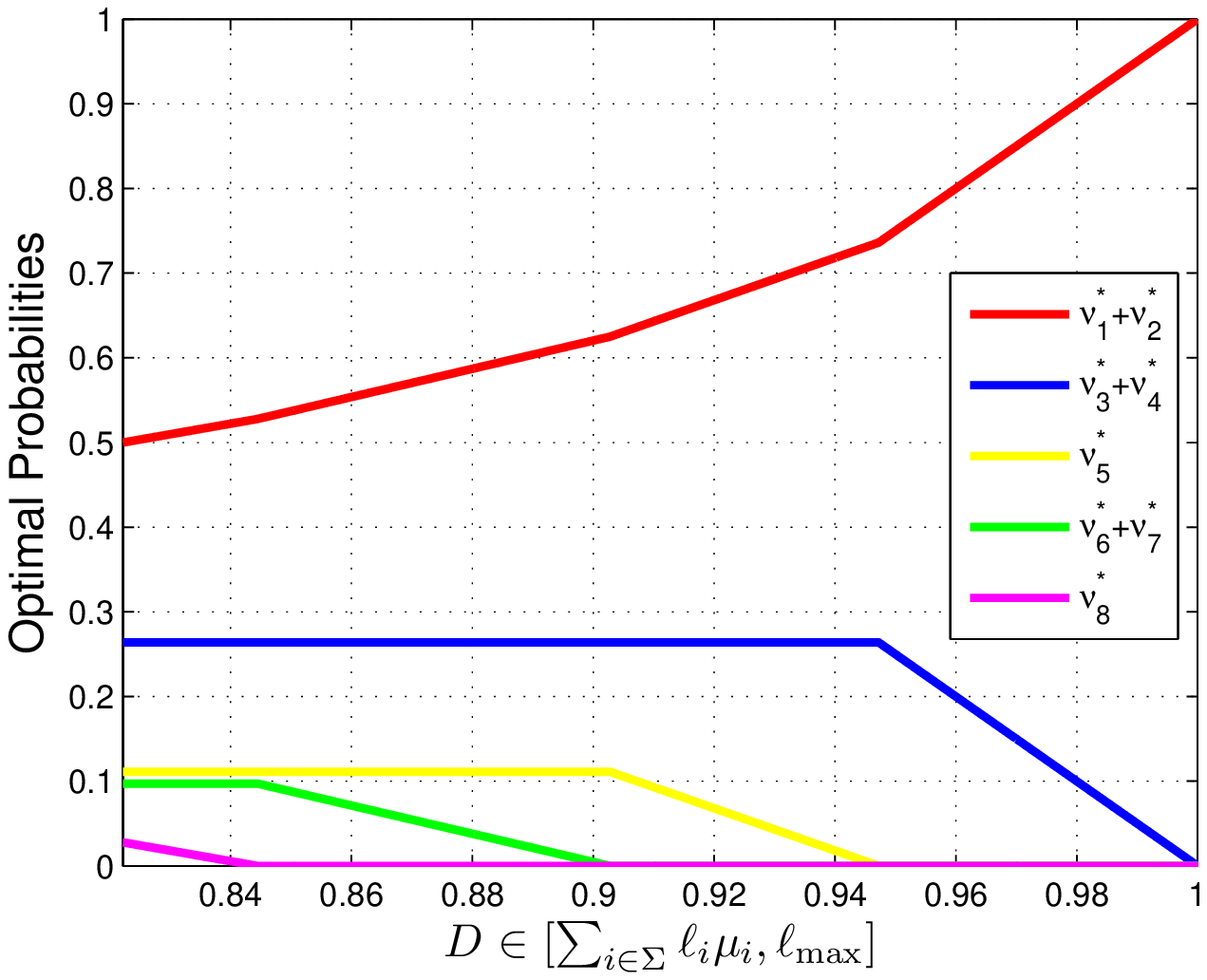}}
\caption[Optimal Solution of Example A]{Solution of Example A: (a) Optimum linear functional pay-off subject to total variational constraint, $D^+(R)$; (b) Optimal probabilities of $D^+(R)$; (c) Optimum total variational pay-off subject to linear functional constraint, $R^+(D)$; and, (d) Optimal probabilities of $R^+(D)$.}
\label{fig1}
\end{figure}

\subsection{Example B}
\label{exB}
\noi Let $\Sigma=\{i:i=1,2,\ldots,8 \}$ and for simplicity consider a descending sequence of lengths $\ell=\{\ell\in\mathbb{R}_+^8:\ell_1>\ell_2>\ell_3>\ell_4>\ell_5>\ell_6>\ell_7>\ell_8 \} $ with corresponding nominal probability vector $\mu\in{\mathbb P}_1(\Sigma)$. Specifically, let $\ell=\left[1,0.8, 0.7, 0.6, 0.5, 0.4, 0.3, 0.2\right]$ and $\mu=\left[\frac{23}{72} ,\frac{13}{72} ,\frac{10}{72} ,\frac{9}{72} ,\frac{8}{72} ,\frac{4}{72} ,\frac{3}{72} ,\frac{2}{72}\right]$. Note that, the sets which correspond to the maximum, minimum and all the remaining lengths are equal to $\Sigma^0=\{1\}, \Sigma_0=\{8\}, \Sigma^1=\{2\}, \Sigma^2=\{3\}, \Sigma^3=\{4\}, \Sigma^4=\{5\}, \Sigma^5=\{6\}, \Sigma^6=\{7\}$.
\noi Figures \ref{fig2}\subref{fig2.1}-\subref{fig2.2} depicts the minimum total variational pay-off subject to linear functional constraint, $R^-(D)$, and the optimal probabilities, both given by Theorem \ref{thmproblm3.2}. Figures \ref{fig2}\subref{fig2.3}-\subref{fig2.4} depicts the minimum linear functional pay-off subject to total variational constraint, $D^-(R)$, and the optimal probabilities, both given by Theorem \ref{thmdualproblm3.2}.
\begin{figure}[h!]
\centering
\subfloat[]{
\label{fig2.1} 
\includegraphics[width=.35\linewidth]{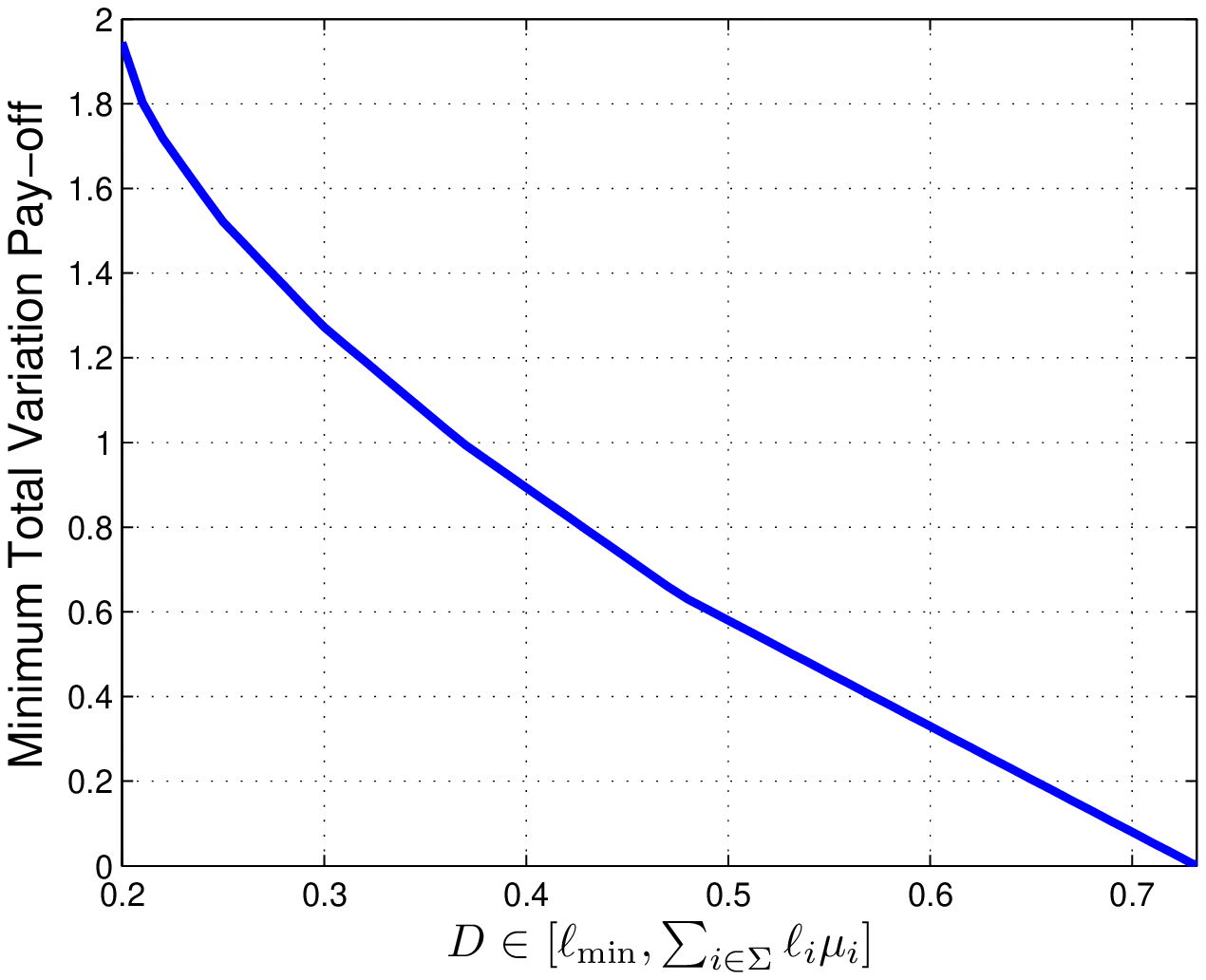}}
\subfloat[]{
\label{fig2.2} 
\includegraphics[width=.35\linewidth]{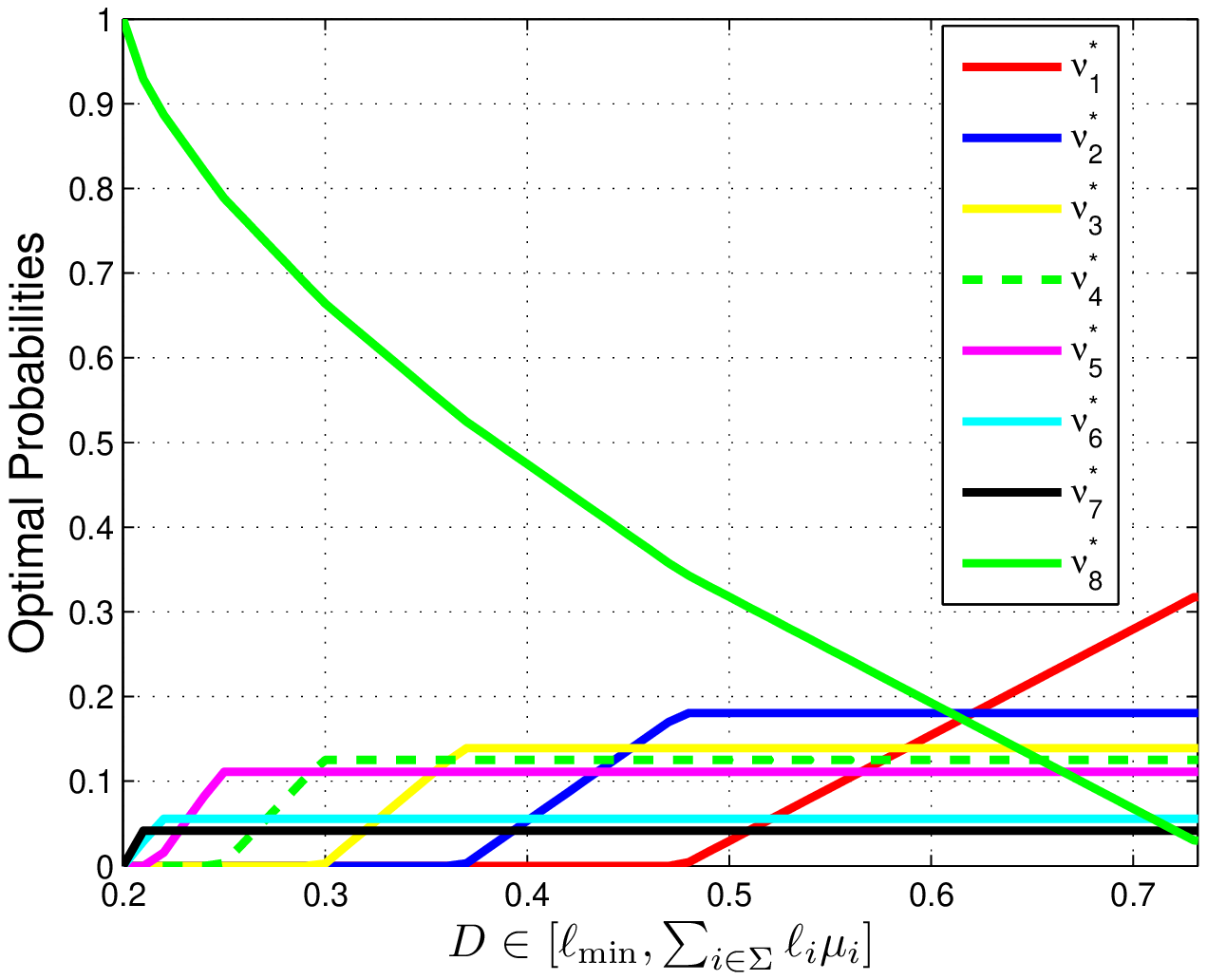}}\\
\subfloat[]{
\label{fig2.3} 
\includegraphics[width=.35\linewidth]{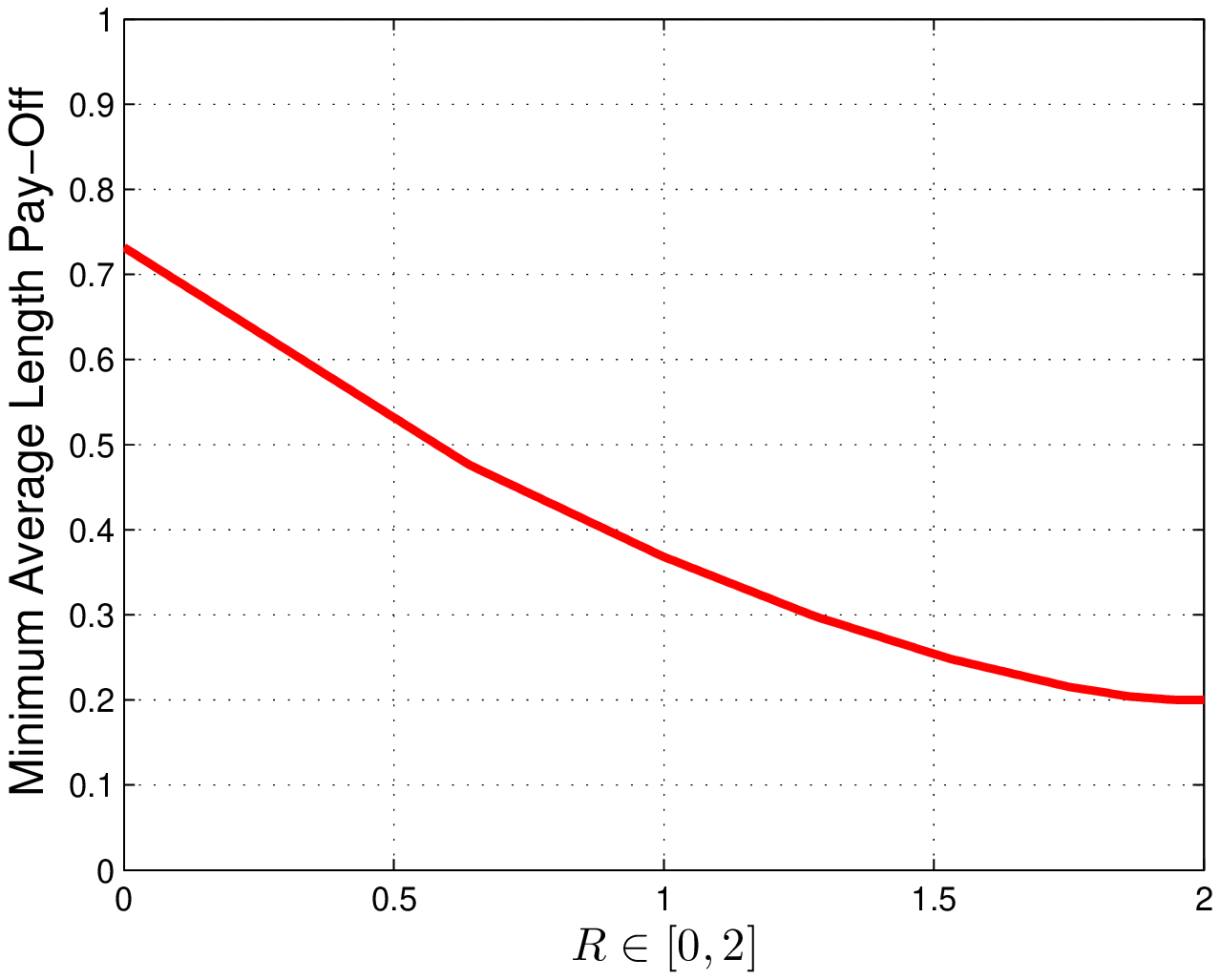}}
\subfloat[]{
\label{fig2.4} 
\includegraphics[width=.35\linewidth]{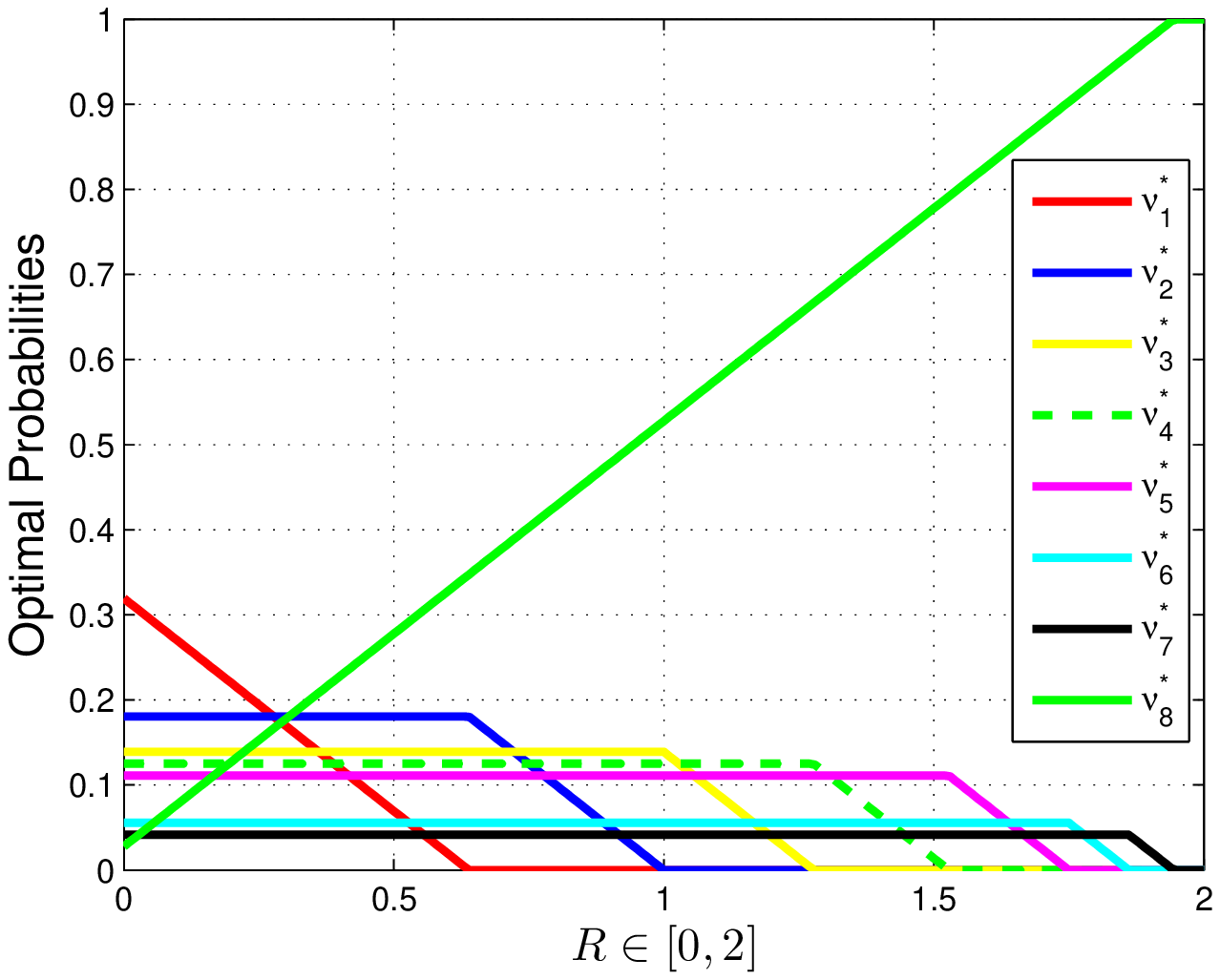}}
\caption[Optimal Solution of Example B]{Solution of Example B: (a) Optimum total variational pay-off subject to linear functional constraint, $R^-(D)$; (b) Optimal probabilities of $R^-(D)$; (c) Optimum linear functional pay-off subject to total variational constraint, $D^-(R)$; and, (d) Optimal probabilities of $D^-(R)$.}
\label{fig2}
\end{figure}

\noi Recall Lemma \ref{properties} case 2 and Corollary \ref{cor.of.lemma}. Figure \ref{fig2.1} shows that, $R^-(D)$ is a non-increasing convex function of $D$, $D\in[\ell_{\min},\sum_{i\in\Sigma}\ell_i\mu_i)$. Note that for $D<\ell_{\min}=0.2$ no solution exists and $R^-(D)$ is zero in $[D_{\max},\infty)$ where $D_{\max}=\sum_{i=1}^8\ell_i\mu_i=0.73$.

\subsection{Example C}
\label{exC}
\noi Let $\Sigma=\{i:i=1,2,\ldots,50 \}$ and consider a descending sequence of lengths $\ell=\{\ell\in\mathbb{R}_+^{50} \} $ with corresponding nominal probability vector $\mu\in{\mathbb P}_1(\Sigma)$. For display purposes the support sets are denoted by $\Sigma_x^y$ where $x,y=\{1,2,\hdots,16\}$, though of course the subscript symbol $x$ corresponds to the support sets of Problem $D^+(R)$, $R^+(D)$ and the superscript symbol $y$ corresponds to the support sets of Problem $D^-(R)$ and $R^-(D)$. Let
\begin{multline}
\ell=\Big[20 \ \ 20 \ \ 20 \ \ 20 \ \ 19 \ \ 19 \ \ 19 \ \ 18 \ \ 17 \ \ 17 \ \ 16 \ \ 14 \ \ 14\ \ 13 \ \ 13 \ \ 13 \ \ 13 \ \ 12 \ \ 10 \ \ 10 \ \ 10 \ \ 10\\
\shoveleft{10 \ \ 9 \ \ 9 \ \ 9\ \ 8\ \ 8\ \ 8\ \ 8\ \ 8\ \ 8 \ \ 8 \ \ 7 \ \ 7 \ \ 6 \ \ 5 \ \ 4 \ \ 3 \ \ 3 \ \ 3 \ \ 3 \ \ 3\ \ 3 \ \ 2\ \ 2 \ \ 2 \ \ 2 \ \ 1 \ \ 1 \Big]},\nonumber
\end{multline}

\noi and
\begin{multline}
\mu=\Big[ 0.052 \ \ 0.002 \ \  0.01\ \  0.006 \ \  0.004 \ \  0.038 \ \  0.032 \ \  0.028\ \ 0.026 \ \ 0.008 \ \ 0.012 \ \  0.01 \ \  0.008\\
\shoveleft{  0.026 \ \  0.05 \ \  0.044 \ \ 0.03 \ \ 0.032 \ \  0.024 \ \  0.01 \ \ 0.02 \ \ 0.03 \ \ 0.014 \ \  0.024 \ \ 0.004 \ \ 0.006 \ \ 0.024}\\
\shoveleft{ 0.01 \ \ 0.022 \ \ 0.012 \ \  0.016 \ \ 0.042 \ \ 0.014 \ \ 0.016 \ \ 0.01\ \ 0.024 \ \ 0.02 \ \ 0.008 \ \ 0.014 \ \ 0.032 \ \  0.018  }\\
\shoveleft{0.012 \ \ 0.01 \ \ 0.04\ \  0.036 \ \ 0.018 \ \ 0.002 \ \ 0.022 \ \ 0.012 \ \ 0.016\Big]}.\nonumber
\end{multline}

\noi Note that, the sets which correspond to the maximum, minimum and all the remaining lengths are equal to
\begin{align*} &\Sigma^0=\{1-4\},\Sigma_0=\{50,49\},\Sigma_1^{16}=\{48-45\},\Sigma_2^{15}=\{44-39\},\Sigma_3^{14}=\{38\},\Sigma_4^{13}=\{37\},\\
&\Sigma_5^{12}=\{36\},\Sigma_6^{11}=\{35,34\},\Sigma_7^{10}=\{33-27\},\Sigma_8^9=\{26-24\},\Sigma_9^8=\{23-19\},\Sigma_{10}^7=\{18\},\\
&\Sigma_{11}^6=\{17-14\},\Sigma_{12}^5=\{13,12\},\Sigma_{13}^4=\{11\},\Sigma_{14}^3=\{10-9\},\Sigma_{15}^2=\{8\},\Sigma_{16}^1=\{7-5\}.
\end{align*}
\noi Figures \ref{fig3}\subref{fig3.1}-\subref{fig3.2} depicts the maximum linear functional pay-off subject to total variational constraint, $D^+(R)$, and the maximum total variational pay-off subject to linear functional constraint, $R^+(D)$, given by Theorem \ref{thmproblm3.1}, \ref{thmdualproblm3.1}, respectively. Figures \ref{fig3}\subref{fig3.3}-\subref{fig3.4} depicts the minimum linear functional pay-off subject to total variational constraint, $D^-(R)$, and the minimum total variational pay-off subject to linear functional constraint, $R^-(D)$, given by Theorem \ref{thmdualproblm3.2}, \ref{thmproblm3.2} respectively.

\begin{figure}[h!]
\centering
\subfloat[]{
\label{fig3.1} 
\includegraphics[width=.35\linewidth]{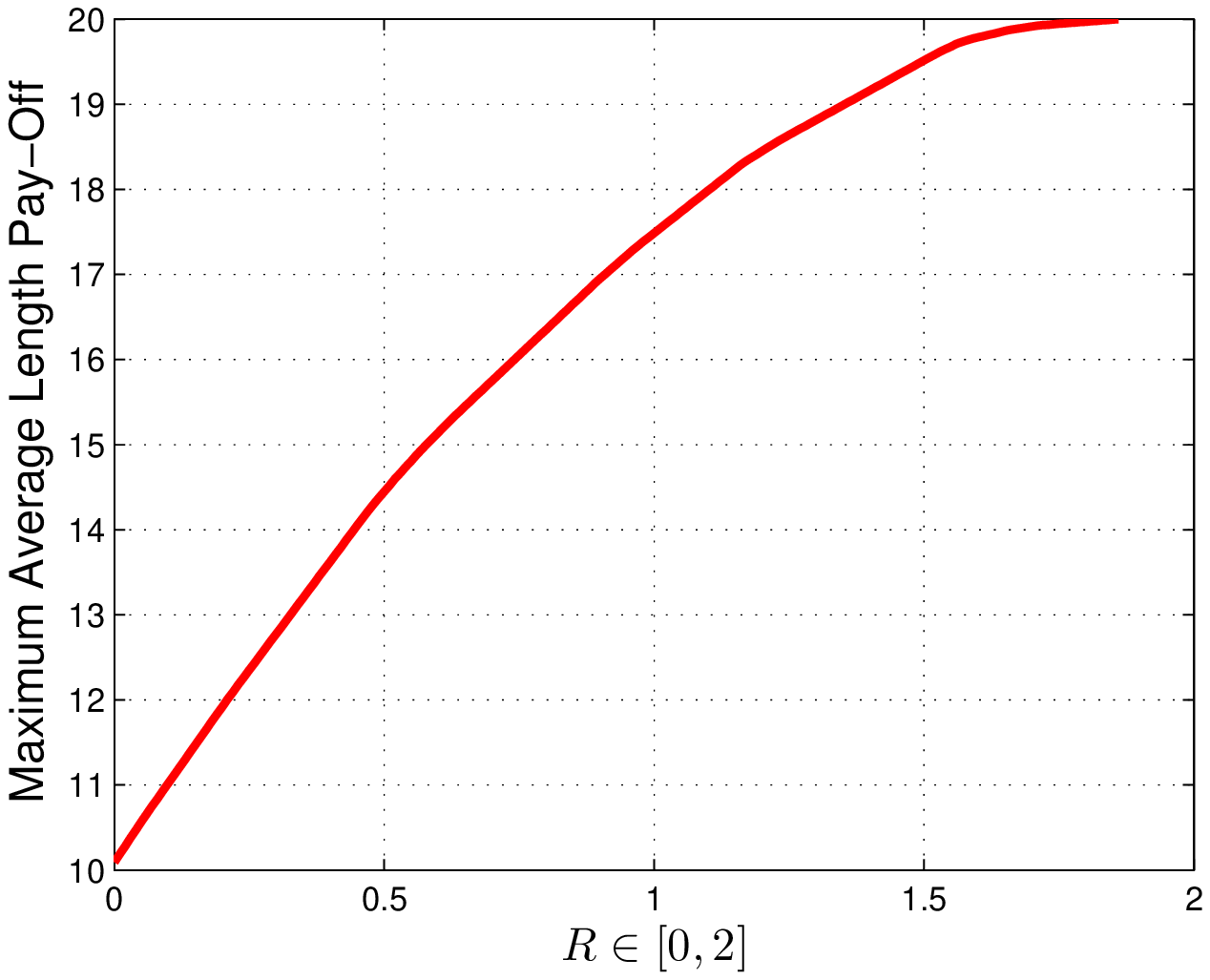}}
\subfloat[]{
\label{fig3.2} 
\includegraphics[width=.35\linewidth]{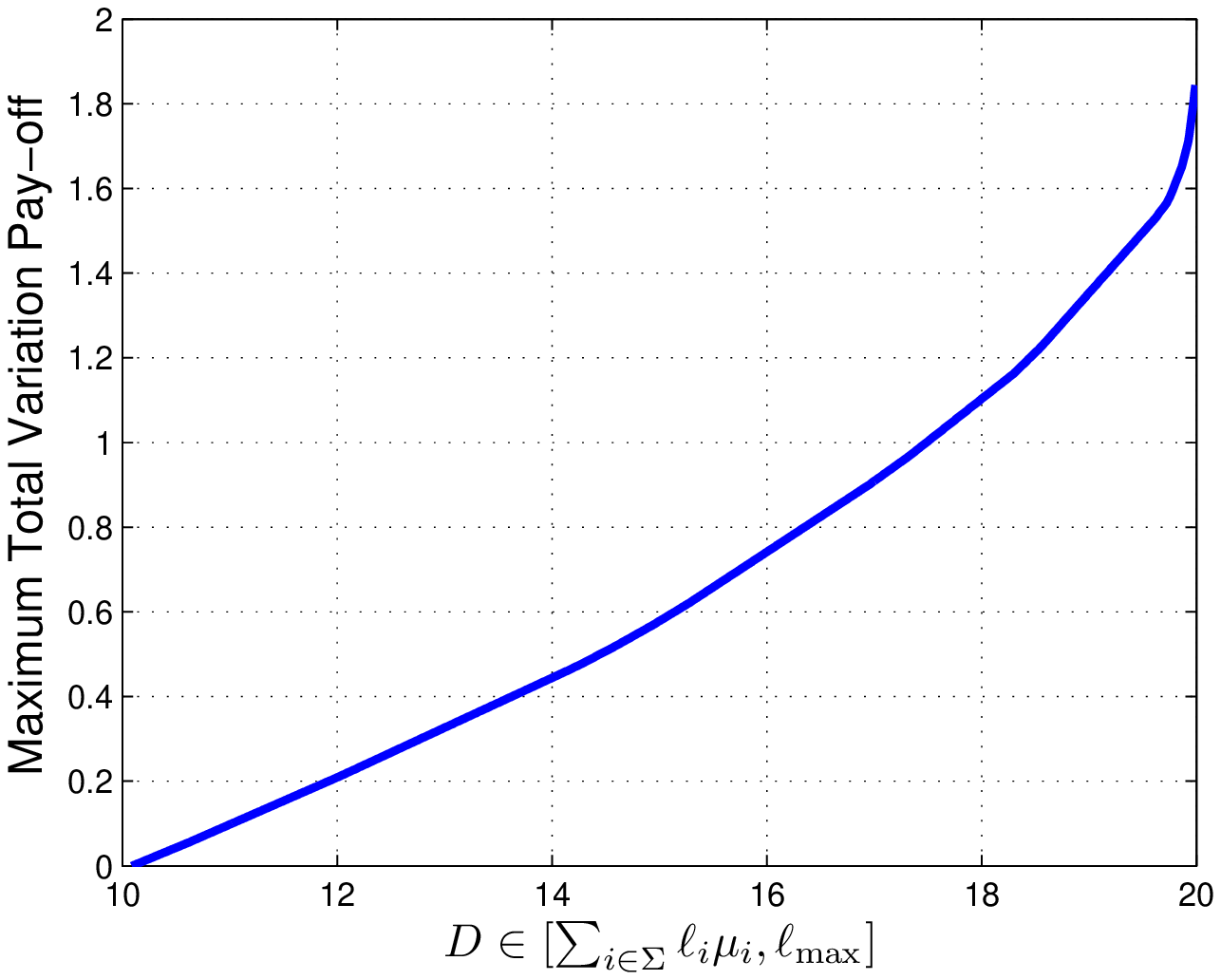}}\\
\subfloat[]{
\label{fig3.3} 
\includegraphics[width=.35\linewidth]{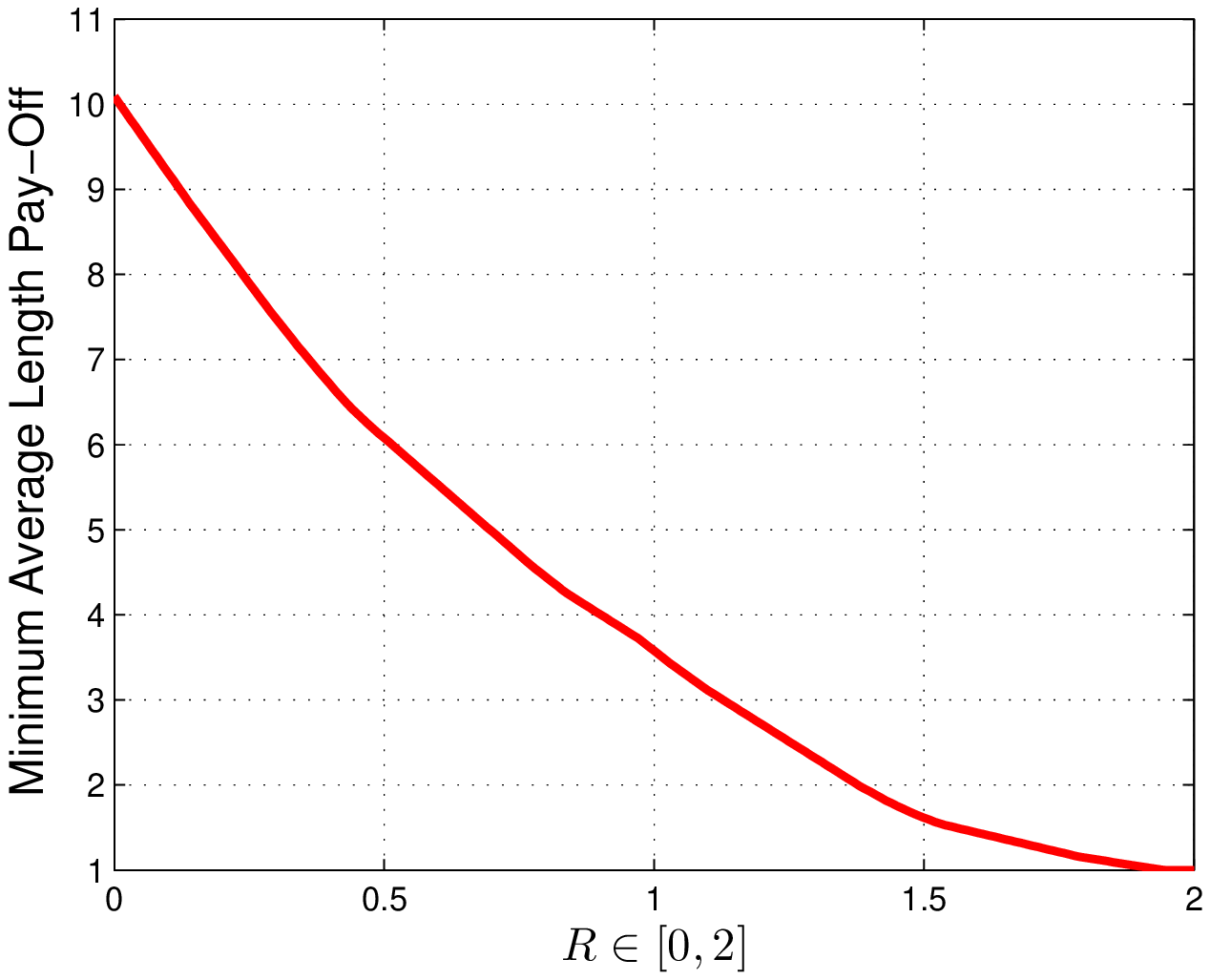}}
\subfloat[]{
\label{fig3.4} 
\includegraphics[width=.35\linewidth]{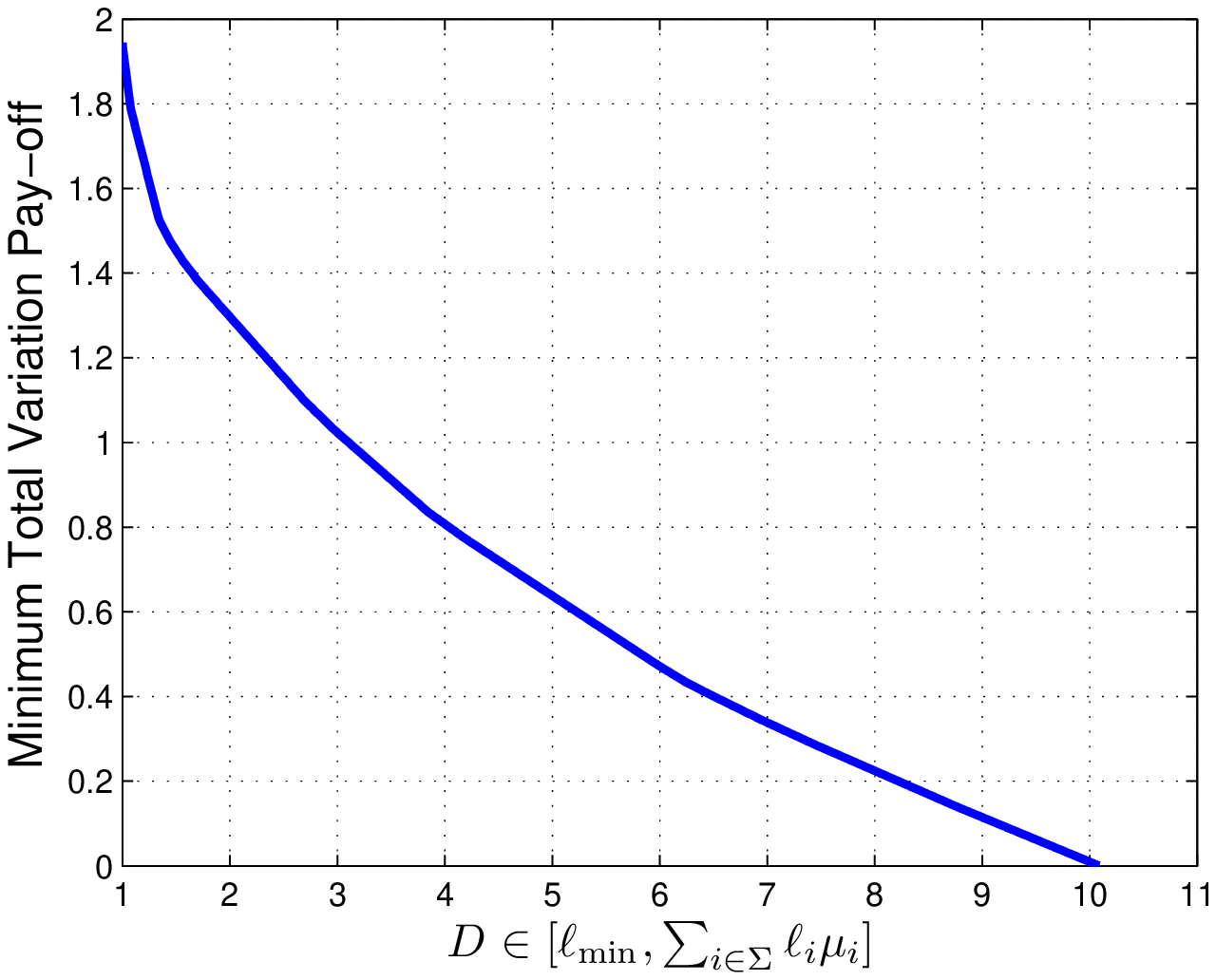}}
\caption[Optimal Solution of Example C]{Solution of Example C: (a) Optimum linear functional pay-off subject to total variational constraint, $D^+(R)$; (b) Optimum total variational pay-off subject to linear functional constraint, $R^+(D)$; (c) Optimum linear functional pay-off subject to total variational constraint, $D^-(R)$; and, (d) Optimum total variational pay-off subject to linear functional constraint, $R^-(D)$.}
\label{fig3}
\end{figure}

%
%
%
%
\section{Conclusion}
\label{sec.conclusions}
\noi This paper is concerned with extremum problems involving total variational distance metric as a pay-off subject to linear functional constraints, and vice-versa; that is, with the roles of total variational metric and linear functional interchanged. These problems are formulated using concepts from signed measures while the theory is developed on abstract spaces. Certain properties and applications of the extremum problems are discussed, while closed form expressions of the extremum measures are derived for finite alphabet spaces. Finally, it is shown through examples how the extremum solution of the various problems behaves. Extremum problems have a wide variety of applications, spanning from Markov decision problems to model reduction.

%
%
%
%
\appendix[Proof of Theorem \ref{thmproblm3.2}]
\label{app1}

\begin{lemma}
\label{alt.dual.min.lemma}
\noi The following bounds hold.\\
1. Lower Bound.
\vspace{-0.3cm}
\bea \sum_{i\in\Sigma}\ell_i\xi_i^+\geq\ell_{\min}\left(\frac{\alpha}{2}\right).
\eea
\noi The bound holds with equality if
\bes
\sum_{i\in\Sigma_0} \mu_i+\frac{\alpha}{2}\leq 1,\hso\sum_{i\in\Sigma_0}\xi_i^+=\frac{\alpha}{2},\hso \xi_i^+=0\hso \mbox{for}\hso i\in\Sigma\setminus\Sigma_0,
\ees
\noi and the optimal probability on $\Sigma_0$ is given by \bes \nu^*(\Sigma_0)\triangleq\sum_{i\in\Sigma_0}\nu_i^*=\min\left(1,\sum_{i\in\Sigma_0}\mu_i+\frac{\alpha}{2}\right).\ees

\noi 2. Upper Bound.
\vspace{-0.3cm}
\bea \sum_{i\in\Sigma}\ell_i\xi_i^-\leq\ell_{\max}\left(\frac{\alpha}{2}\right).
\eea
\noi The bound holds with equality if
\vspace{-0.1cm}
\bes
\sum_{i\in \Sigma^0}\mu_i-\frac{\alpha}{2}\geq 0,\hso\sum_{i\in \Sigma^0}\xi_i^-=\frac{\alpha}{2},\hso \xi_i^-=0\hso\mbox{for}\hso i\in \Sigma\setminus\Sigma^0,
\ees
\noi \noi and the optimal probability on $\Sigma^0$ is given by
\vspace{-0.3cm}
\bes \nu^*(\Sigma^0)\triangleq\sum_{i\in\Sigma^0}\nu_i^*=\left(\sum_{i\in\Sigma^0}\mu_i-\frac{\alpha}{2}\right)^+.\ees
\end{lemma}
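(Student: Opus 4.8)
The plan is to establish both bounds by elementary term-by-term estimates, in exact parallel with Lemma \ref{lemmabound} but with the roles of $\ell_{\max}$ and $\ell_{\min}$ interchanged, since here the linear functional is being minimized rather than maximized. The only structural input needed is the identity recorded just before the lemma, namely $\sum_{i\in\Sigma}\xi_i^+=\sum_{i\in\Sigma}\xi_i^-=\frac{\alpha}{2}$, which follows from $\sum_{i\in\Sigma}\xi_i=0$ together with $\alpha=\sum_{i\in\Sigma}|\xi_i|$.

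For the lower bound of Part 1, I would use $\ell_i\geq\ell_{\min}$ for every $i\in\Sigma$ and $\xi_i^+\geq 0$ to write $\sum_{i\in\Sigma}\ell_i\xi_i^+\geq\ell_{\min}\sum_{i\in\Sigma}\xi_i^+=\ell_{\min}\frac{\alpha}{2}$. Equality forces $\xi_i^+=0$ whenever $\ell_i>\ell_{\min}$, i.e. $\xi_i^+=0$ for $i\in\Sigma\setminus\Sigma_0$, so the entire positive variation $\frac{\alpha}{2}$ must sit on $\Sigma_0$; this is precisely the stated equality condition. Feasibility then reduces to keeping $\nu$ in the simplex: nonnegativity $\nu_i=\mu_i+\xi_i^+\geq 0$ on $\Sigma_0$ is automatic, while the requirement $\sum_{i\in\Sigma_0}\nu_i=\sum_{i\in\Sigma_0}\mu_i+\frac{\alpha}{2}\leq 1$ simultaneously guarantees that enough mass $\frac{\alpha}{2}$ remains on $\Sigma\setminus\Sigma_0$ to be removed. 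When this inequality holds the optimal mass on $\Sigma_0$ is $\sum_{i\in\Sigma_0}\mu_i+\frac{\alpha}{2}$; when it would be violated the mass saturates at $1$, which is exactly the $\min$ appearing in the expression for $\nu^*(\Sigma_0)$.

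The upper bound of Part 2 is dual: from $\ell_i\leq\ell_{\max}$ and $\xi_i^-\geq 0$ I would obtain $\sum_{i\in\Sigma}\ell_i\xi_i^-\leq\ell_{\max}\sum_{i\in\Sigma}\xi_i^-=\ell_{\max}\frac{\alpha}{2}$, with equality iff $\xi_i^-=0$ for all $i\in\Sigma\setminus\Sigma^0$, concentrating the negative variation on $\Sigma^0$. Here feasibility requires $\nu_i=\mu_i-\xi_i^-\geq 0$ on $\Sigma^0$, equivalently $\sum_{i\in\Sigma^0}\mu_i-\frac{\alpha}{2}\geq 0$, and the truncation $(\cdot)^+$ in $\nu^*(\Sigma^0)$ encodes the saturation at zero when this condition fails.

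The main obstacle is not the inequalities, which are immediate, but the bookkeeping of the feasibility and saturation cases: one must verify that concentrating all of $\xi^+$ on $\Sigma_0$ (respectively all of $\xi^-$ on $\Sigma^0$) yields a genuine probability vector, and pin down exactly when the normalization or nonnegativity constraints become binding, thereby producing the $\min(1,\cdot)$ and $(\cdot)^+$ clamps. These are the same considerations that, once a constraint does bind, force the overflow onto the successive sets $\Sigma^k$ handled in the corollary and in the statement of Theorem \ref{thmproblm3.2}.
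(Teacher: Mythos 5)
Your proposal is correct and follows essentially the same route as the paper: the paper's proof of this lemma is simply the remark that it ``follows from Section \ref{subsec.Equiv.Extr.Prob}'', i.e.\ the term-by-term bounds $\sum_i\ell_i\xi_i^+\geq\ell_{\min}\sum_i\xi_i^+$ and $\sum_i\ell_i\xi_i^-\leq\ell_{\max}\sum_i\xi_i^-$ combined with $\sum_i\xi_i^+=\sum_i\xi_i^-=\frac{\alpha}{2}$, which is exactly what you spell out. Your additional bookkeeping of the equality and feasibility conditions (concentration on $\Sigma_0$ and $\Sigma^0$, and the $\min(1,\cdot)$ and $(\cdot)^+$ clamps) matches the paper's statement and the analogous Lemma \ref{lemmabound}.
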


\begin{proof}
\noi Follows from Section \ref{subsec.Equiv.Extr.Prob}.
\end{proof}

\begin{proposition}\label{propminprblm}
\noi If $\sum_{i\in\Sigma_0}\mu_i+\frac{\alpha}{2}=1$  and $\nu_i^*\geq \mu_i$ for all $i\in\Sigma_0$ then $R^-(D)=2(1-\sum_{i\in\Sigma_0}\mu_i)$.
\end{proposition}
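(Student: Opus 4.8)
The plan is to follow the same bookkeeping used for $R_{\max}$ in the proof of Corollary~\ref{cor.of.lemma}, now with the roles of $\Sigma^0$ and $\Sigma_0$ interchanged, since for Problem~\ref{problem2} mass is \emph{added} to the minimizing set $\Sigma_0$ rather than removed. First I would combine the hypothesis $\sum_{i\in\Sigma_0}\mu_i+\frac{\alpha}{2}=1$ with the equality condition of Lemma~\ref{alt.dual.min.lemma}, for which $\nu^*(\Sigma_0)=\min\big(1,\sum_{i\in\Sigma_0}\mu_i+\frac{\alpha}{2}\big)$. Under the stated hypothesis this minimum equals $1$, so $\nu^*(\Sigma_0)=1$ and hence $\nu_i^*=0$ for every $i\in\Sigma\setminus\Sigma_0$; that is, all the mass of the extremum measure has been pushed onto $\Sigma_0$.

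Next I would evaluate the pay-off $R^-(D)=\sum_{i\in\Sigma}|\nu_i^*-\mu_i|$ directly by splitting the sum over $\Sigma_0$ and its complement:
\begin{align*}
R^-(D)=\sum_{i\in\Sigma_0}|\nu_i^*-\mu_i|+\sum_{i\in\Sigma\setminus\Sigma_0}|\nu_i^*-\mu_i|.
\end{align*}
On the complement $\nu_i^*=0$, so each summand is simply $\mu_i$ and that block contributes $\sum_{i\in\Sigma\setminus\Sigma_0}\mu_i=1-\sum_{i\in\Sigma_0}\mu_i$. On $\Sigma_0$ the second hypothesis $\nu_i^*\geq\mu_i$ lets me drop the absolute values with the correct sign, so that block contributes $\sum_{i\in\Sigma_0}(\nu_i^*-\mu_i)=\nu^*(\Sigma_0)-\sum_{i\in\Sigma_0}\mu_i=1-\sum_{i\in\Sigma_0}\mu_i$, using $\nu^*(\Sigma_0)=1$. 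Adding the two equal contributions yields $R^-(D)=2\big(1-\sum_{i\in\Sigma_0}\mu_i\big)$, as claimed.

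I do not expect a genuine obstacle here, since the argument is a short accounting of where the mass sits; the only point that needs care is the precise role of the two hypotheses. The first hypothesis is exactly what saturates the cap in $\nu^*(\Sigma_0)=\min(1,\cdot)$ and forces $\nu^*$ to be supported on $\Sigma_0$, while the second hypothesis $\nu_i^*\geq\mu_i$ on $\Sigma_0$ is what converts the triangle inequality $\sum_{i\in\Sigma_0}|\nu_i^*-\mu_i|\geq 1-\sum_{i\in\Sigma_0}\mu_i$ into an equality; without it one could only conclude $R^-(D)\geq 2\big(1-\sum_{i\in\Sigma_0}\mu_i\big)$. This is the exact mirror of step~(b) in the $R_{\max}$ computation of Corollary~\ref{cor.of.lemma} and of the role played by Proposition~\ref{subcase1} for Problem~\ref{problem1}.
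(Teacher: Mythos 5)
Your proposal is correct and follows essentially the same route as the paper's own proof: the saturation hypothesis forces $\nu^*(\Sigma_0)=1$ and hence $\nu_i^*=0$ off $\Sigma_0$, the total variation is split over $\Sigma_0$ and its complement, and the hypothesis $\nu_i^*\geq\mu_i$ on $\Sigma_0$ removes the absolute values to give the two equal contributions of $1-\sum_{i\in\Sigma_0}\mu_i$. No gaps.
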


\begin{proof}
\noi The condition $\sum_{i\in\Sigma_0}\mu_i+\frac{\alpha}{2}=1$ implies that $\sum_{i\in\Sigma_0}\nu_i^*=1$ and therefore $\sum_{i\in\Sigma\setminus\Sigma_0}\nu_i^*=0$, hence $\nu_i^*=0$, for all $i\in\Sigma\setminus\Sigma_0$. Then the minimum pay-off (\ref{dualmp}) is given by
\begin{align*}
R^-(D)&=\sum_{i\in\Sigma_0}\left|\nu_i^*-\mu_i\right|+\sum_{i\in\Sigma\setminus\Sigma_0}\left|\nu_i^*-\mu_i\right|
=\sum_{i\in\Sigma_0}\left|\nu_i^*-\mu_i\right|+\sum_{i\in\Sigma\setminus\Sigma_0}\left|-\mu_i\right|\\
&\overset{(a)}= \sum_{i\in\Sigma_0}\nu_i^*-\sum_{i\in\Sigma_0}\mu_i+\sum_{i\in\Sigma\setminus\Sigma_0}\mu_i=\left(1-\sum_{i\in\Sigma_0}\mu_i\right)+\left(1-\sum_{i\in\Sigma_0}\mu_i\right)
=2\left(1-\sum_{i\in\Sigma_0}\mu_i\right).
\end{align*}
\noi where (a) follows due to the fact that $\nu_i^*\geq \mu_i$ for all $i\in\Sigma_0$.
\end{proof}

\noi Next, we show the derivation of \eqref{eq5.1}.
\begin{lemma}\label{alt.dual.min.subcase1}
\noi Under the conditions of Lemma \ref{alt.dual.min.lemma}, then
\begin{align}
R^-(D)=\frac{\displaystyle 2\left(D-\sum_{i\in\Sigma}\ell_i\mu_i\right)}{\ell_{\min}-\ell_{\max}}.\label{alt.dual.min.subcase1.eq.1}
\end{align}
\end{lemma}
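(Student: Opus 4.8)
The plan is to combine the equality conditions of Lemma \ref{alt.dual.min.lemma} with the average-constraint identity \eqref{alt.dual.min.average.constraint}, and then to exploit the fact that at the minimizer the linear-functional constraint must be active.

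First I would invoke Lemma \ref{alt.dual.min.lemma}: under its hypotheses both the lower and the upper bound are attained with equality, so that
\[
\sum_{i\in\Sigma}\ell_i\xi_i^+=\ell_{\min}\,\frac{\alpha}{2},
\qquad
\sum_{i\in\Sigma}\ell_i\xi_i^-=\ell_{\max}\,\frac{\alpha}{2},
\]
where $\alpha=\sum_{i\in\Sigma}|\xi_i|=||\nu-\mu||_{TV}$. Substituting these two identities into \eqref{alt.dual.min.average.constraint}, i.e. into $\sum_i\ell_i\nu_i=\sum_i\ell_i\xi_i^+-\sum_i\ell_i\xi_i^-+\sum_i\ell_i\mu_i$, collapses the average to the single expression
\[
\sum_{i\in\Sigma}\ell_i\nu_i=\frac{\alpha}{2}\bigl(\ell_{\min}-\ell_{\max}\bigr)+\sum_{i\in\Sigma}\ell_i\mu_i.
\]

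The crucial step is to pin the value of $\alpha$. Since we operate in the nontrivial regime $D<\sum_i\ell_i\mu_i$ (equivalently $D\le D_{\max}$), Lemma \ref{properties}(2), and in particular the representation \eqref{prop2}, guarantees that $R^-(D)$ is attained with the constraint holding as an equality, $\sum_i\ell_i\nu_i^*=D$, rather than as a strict inequality. Equating the displayed average to $D$ and recalling $R^-(D)=\alpha$, a one-line rearrangement gives
\[
R^-(D)=\frac{2\bigl(D-\sum_{i\in\Sigma}\ell_i\mu_i\bigr)}{\ell_{\min}-\ell_{\max}},
\]
which is \eqref{alt.dual.min.subcase1.eq.1}; the sign is consistent, since $\ell_{\min}-\ell_{\max}<0$ and $D-\sum_i\ell_i\mu_i<0$ together force $R^-(D)>0$.

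I expect the one genuine point requiring care to be the activeness of the constraint. This is exactly where the convexity and monotonicity of $R^-(D)$ from Lemma \ref{properties}(2) enter: the bounds of Lemma \ref{alt.dual.min.lemma} show that, for a \emph{fixed} total variation $\alpha$, the smallest attainable average equals $\frac{\alpha}{2}(\ell_{\min}-\ell_{\max})+\sum_i\ell_i\mu_i$, which is strictly decreasing in $\alpha$. Hence the least $\alpha$ compatible with $\sum_i\ell_i\nu_i\le D$ is the one for which this lower envelope meets $D$, confirming that at optimality the constraint is tight and that the configuration of Lemma \ref{alt.dual.min.lemma} — all negative variation placed on $\Sigma^0$ and all positive variation on $\Sigma_0$ — is the efficient one. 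The remaining manipulations are purely algebraic.
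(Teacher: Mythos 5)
Your proposal is correct and follows essentially the same route as the paper: substitute the equality conditions of Lemma \ref{alt.dual.min.lemma} into the average constraint \eqref{alt.dual.min.average.constraint}, obtain $\frac{\alpha}{2}(\ell_{\min}-\ell_{\max})+\sum_{i\in\Sigma}\ell_i\mu_i\leq D$, and solve for $\alpha$ at the boundary. Your explicit justification of why the boundary is selected (the constraint must be active because the attainable average is strictly decreasing in $\alpha$) is a welcome tightening of the paper's terser ``select the solution on the boundary'' step, but it is the same argument.
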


\begin{proof}
\noi From (\ref{alt.dual.min.average.constraint}) and Lemma \ref{alt.dual.min.lemma} we have
\begin{align*}
D\geq\sum_{i\in\Sigma}\ell_i\xi_i^+-\sum_{i\in\Sigma}\ell_i\xi_i^-+\sum_{i\in\Sigma}\ell_i\mu_i=\ell_{\min}\left(\frac{\alpha}{2}\right)-\ell_{\max}\left(\frac{\alpha}{2}\right)+\sum_{i\in\Sigma}\ell_i\mu_i.
\end{align*}

\noi Solving the above equation with respect to $\alpha$ we get that
\begin{align*}
\alpha\leq \frac{\displaystyle 2\left(D-\sum_{i\in\Sigma}\ell_i\mu_i\right)}{\ell_{\min}-\ell_{\max}}.
\end{align*}

\noi If we select the solution on the boundary then, (\ref{alt.dual.min.subcase1.eq.1}) is obtained.
\end{proof}


\begin{corollary}\label{alt.dual.min.subcase3}
\noi For any $k\in\{1,2,\hdots,r\}$ if the following conditions hold
\begin{subequations}
\begin{align}
&\sum_{j=1}^k\sum_{i\in\Sigma^{j-1}}\mu_i-\frac{\alpha}{2}\leq 0\hso \mbox{and}\hso \sum_{j=0}^k\sum_{i\in\Sigma^{j}}\mu_i-\frac{\alpha}{2}\geq 0,\label{opprobcond}\\
&\sum_{i\in\Sigma^{j-1}}\xi_i^-=\sum_{i\in\Sigma^{j-1}}\mu_i,\hso\mbox{for all}\hso j=1,2,\hdots,k,\\
&\sum_{i\in \Sigma^k}\xi_i^-=\left(\frac{\alpha}{2}-\sum_{j=1}^k\sum_{i\in\Sigma^{j-1}}\mu_i\right),\label{opprobsigmk}\\
&\xi_i^-=0 \hso\mbox{for all}\hso i\in\Sigma\setminus\Sigma^0\cup\Sigma^1\cup\hdots\cup\Sigma^k,\\
&\sum_{i\in\Sigma_0}\mu_i+\frac{\alpha}{2}<1,\\
&\sum_{i\in\Sigma_0}\xi_i^+=\frac{\alpha}{2}, \quad \xi_i^+=0 \hso\mbox{for all}\hso i\in\Sigma\setminus\Sigma_0,
\end{align}
\end{subequations}
\noi then
\begin{align}
R^-(D)=\frac{\displaystyle 2\left(D-\ell_{\min}\sum_{i\in\Sigma_0}\mu_i-\ell\left(\Sigma^k\right)\sum_{j=1}^{k}\sum_{i\in\Sigma^{j-1}}\mu_i-\sum_{j=k}^r\sum_{i\in\Sigma^j}\ell_i\mu_i\right)}{\ell_{\min}-\ell\left({\Sigma^k}\right)}.\label{alt.dual.min.subcase3.eq.1}
\end{align}
\noi Moreover, the optimal probability on $\Sigma^k$ is given by \bea \label{opprosigk} \nu^*(\Sigma^k)\triangleq\sum_{i\in\Sigma^k} \nu_i^*=\left(\sum_{i\in\Sigma^k} \mu_i-\left(\frac{\alpha}{2}-\sum_{j=1}^k\sum_{i\in\Sigma^{j-1}}\mu_i\right)^+\right)^+.\eea
\end{corollary}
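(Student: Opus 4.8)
The plan is to evaluate both the positive and negative parts of the linear functional under the prescribed configuration, substitute them into the average constraint (\ref{alt.dual.min.average.constraint}) taken at its boundary, and solve the resulting linear relation for $\alpha = R^-(D)$. This mirrors the argument for Problem \ref{problem1} in Corollary \ref{subcase3}, with the roles of maxima and minima interchanged, consistent with the reversed recursion (\ref{sigmasetvalue}).

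First I would compute $\sum_{i\in\Sigma}\ell_i\xi_i^-$. Since the conditions prescribe $\xi_i^-=\mu_i$ on each $\Sigma^{j-1}$ for $j=1,\dots,k$, a partial removal $\sum_{i\in\Sigma^k}\xi_i^-=\frac{\alpha}{2}-\sum_{j=1}^k\sum_{i\in\Sigma^{j-1}}\mu_i$ on $\Sigma^k$, and $\xi_i^-=0$ elsewhere, and since $\ell_i$ is constant on each level set, the evaluation is identical to the equality case of Corollary \ref{subcase3} and gives
\begin{align*}
\sum_{i\in\Sigma}\ell_i\xi_i^-=\sum_{j=1}^k\sum_{i\in\Sigma^{j-1}}\ell_i\mu_i+\ell\left(\Sigma^k\right)\left(\frac{\alpha}{2}-\sum_{j=1}^k\sum_{i\in\Sigma^{j-1}}\mu_i\right).
\end{align*}
For the positive part, the conditions place all of $\xi^+$ on $\Sigma_0$ with total mass $\frac{\alpha}{2}$, so by Lemma \ref{alt.dual.min.lemma} we have $\sum_{i\in\Sigma}\ell_i\xi_i^+=\ell_{\min}\left(\frac{\alpha}{2}\right)$.

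Next I would insert these two expressions into (\ref{alt.dual.min.average.constraint}). Exactly as in Lemma \ref{alt.dual.min.subcase1}, the minimal total variation is attained on the boundary, so I set $\sum_{i\in\Sigma}\ell_i\xi_i^+-\sum_{i\in\Sigma}\ell_i\xi_i^-+\sum_{i\in\Sigma}\ell_i\mu_i=D$. Collecting the terms proportional to $\frac{\alpha}{2}$ produces the coefficient $\ell_{\min}-\ell\left(\Sigma^k\right)$, while the remaining constant terms simplify via the partition identity
\begin{align*}
\sum_{i\in\Sigma}\ell_i\mu_i-\sum_{j=1}^k\sum_{i\in\Sigma^{j-1}}\ell_i\mu_i=\ell_{\min}\sum_{i\in\Sigma_0}\mu_i+\sum_{j=k}^r\sum_{i\in\Sigma^j}\ell_i\mu_i,
\end{align*}
which holds because $\{\Sigma_0,\Sigma^0,\Sigma^1,\dots,\Sigma^r\}$ partitions $\Sigma$ and the index shift $\sum_{j=1}^k\sum_{i\in\Sigma^{j-1}}=\sum_{j=0}^{k-1}\sum_{i\in\Sigma^j}$ telescopes the removed levels. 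Solving the resulting linear relation for $\alpha$ yields precisely (\ref{alt.dual.min.subcase3.eq.1}) with $R^-(D)=\alpha$.

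Finally, the optimal mass on $\Sigma^k$ follows from $\nu_i^*=\mu_i-\xi_i^-$: summing over $\Sigma^k$ gives $\sum_{i\in\Sigma^k}\nu_i^*=\sum_{i\in\Sigma^k}\mu_i-\left(\frac{\alpha}{2}-\sum_{j=1}^k\sum_{i\in\Sigma^{j-1}}\mu_i\right)$, and imposing feasibility through the positive-part operators reproduces (\ref{opprosigk}). The main obstacle I anticipate is purely the bookkeeping: keeping the index shift between $\Sigma^{j-1}$ and $\Sigma^j$ consistent throughout the telescoping partition sums, and checking that the two inequalities in (\ref{opprobcond}) are exactly what guarantee both that every higher level set is fully depleted and that $\Sigma^k$ retains nonnegative residual mass, so that the $(\cdot)^+$ operators activate precisely as written.
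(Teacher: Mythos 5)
Your proposal is correct and follows essentially the same route as the paper's proof: evaluate $\sum_i\ell_i\xi_i^-$ and $\sum_i\ell_i\xi_i^+$ under the prescribed allocation, substitute into the average constraint (\ref{alt.dual.min.average.constraint}), take the boundary and solve for $\alpha$, then read off $\nu^*(\Sigma^k)$ from (\ref{opprobsigmk}) with feasibility enforced by (\ref{opprobcond}). The only cosmetic difference is that you make the partition/telescoping identity for the constant terms explicit, whereas the paper leaves that bookkeeping implicit.
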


\begin{proof}
\noi Under the conditions stated, we have that
\begin{align*}
\sum_{i\in\Sigma}\ell_i\xi_i^-&= \sum_{j=1}^k\sum_{i\in\Sigma^{j-1}}\ell_i\xi_i^-+\sum_{i\in\Sigma^k}\ell_i\xi_i^-+\sum_{i\in\Sigma\setminus\displaystyle\cup_{j=0}^k\Sigma^j}\ell_i\xi_i^-=\sum_{j=1}^k\sum_{i\in\Sigma^{j-1}}\ell_i\mu_i+\ell(\Sigma^k)\left(\frac{\alpha}{2}-\sum_{j=1}^k\sum_{i\in\Sigma^{j-1}}\mu_i\right).
\end{align*}
\noi Also,
\vspace{-0.4cm}
\begin{align*}
\sum_{i\in\Sigma}\ell_i\xi_i^+=\sum_{i\in\Sigma_0}\ell_i\xi_i^++\sum_{i\in\Sigma\setminus\Sigma_0}\ell_i\xi_i^+=\ell_{\min}\left(\frac{\alpha}{2}\right).
\end{align*}

\noi From (\ref{alt.dual.min.average.constraint}), we have that
\begin{align*}
D&\geq\sum_{i\in\Sigma}\ell_i\xi_i^+-\sum_{i\in\Sigma}\ell_i\xi_i^-+\sum_{i\in\Sigma}\ell_i\mu_i\\
&=\ell_{\min}\left(\sum_{i\in\Sigma_0}\mu_i+\frac{\alpha}{2}\right)-\sum_{j=1}^k\sum_{i\in\Sigma^{j-1}}\ell_i\mu_i-\ell({\Sigma^k})\left(\frac{\alpha}{2}-\sum_{j=1}^k\sum_{i\in\Sigma^{j-1}}\mu_i\right)+\sum_{i\in\Sigma\setminus\Sigma_0}\ell_i\mu_i.
\end{align*}

\noi Solving the above equation with respect to $\alpha$ we get that
\begin{align*}
\alpha\leq \frac{\displaystyle 2\left(D-\ell_{\min}\sum_{i\in\Sigma_0}\mu_i-\ell(\Sigma^k)\sum_{j=1}^{k}\sum_{i\in\Sigma^{j-1}}\mu_i-\sum_{j=k}^r\sum_{i\in\Sigma^j}\ell_i\mu_i\right)}{\ell_{\min}-\ell(\Sigma^k)}.
\end{align*}
\noi If we select the solution at the boundary then, (\ref{alt.dual.min.subcase3.eq.1}) is obtained. From (\ref{opprobsigmk}) we have that
\bes
\sum_{i\in \Sigma^k}\xi_i^-=\left(\frac{\alpha}{2}-\sum_{j=1}^k\sum_{i\in\Sigma^{j-1}}\mu_i\right),
\text{    and hence,     } \sum_{i\in \Sigma^k}\nu_i=\sum_{i\in \Sigma^k}\mu_i-\left(\frac{\alpha}{2}-\sum_{j=1}^k\sum_{i\in\Sigma^{j-1}}\mu_i\right).\ees
\noi The optimal $\sum_{i\in\Sigma^k}\nu_i^*$ must satisfy (\ref{opprobcond}). Hence, (\ref{opprosigk}) is obtained.
\end{proof}

\noi Putting together Lemma \ref{alt.dual.min.lemma}, Proposition \ref{propminprblm}, Lemma \ref{alt.dual.min.subcase1}, and Corollary \ref{alt.dual.min.subcase3} we obtain the result of Theorem \ref{thmproblm3.2}.

\vspace{-0.3cm}
\bibliographystyle{IEEEtran}
\bibliography{Bibliography}

\end{document}